\newtheorem{thm}{Theorem}[subsection]
\newtheorem{lem}[thm]{Lemma}
\newtheorem{cor}[thm]{Corollary}
\newtheorem{prop}[thm]{Proposition}
\theoremstyle{definition}
\newtheorem{defn}[thm]{Definition}
\theoremstyle{remark}
\newtheorem{rem}[thm]{Remark}
\numberwithin{equation}{section}
\def\ccc{{\mathcal F}}
\def\vs#1{\vskip .#1 cm} 
\def\xrarrow{\xrightarrow} 
\def\ot{\leftarrow}
\def\-{\text{-}}
\def\noteq{\neq}
\def\<{\left<}
\def\>{\right>}
 \newcommand{\into}{\hookrightarrow}
 \newcommand{\onto}{\twoheadrightarrow}
 \newcommand{\cof}{\rightarrowtail}
\DeclareMathOperator{\Hom}{Hom}%
\DeclareMathOperator{\Ext}{Ext}%
\DeclareMathOperator{\End}{End}%
\DeclareMathOperator{\Aut}{Aut}
\newcommand{\field}[1]{\mathbb{#1}}
\newcommand{\ZZ}{\ensuremath{{\field{Z}}}}
\newcommand{\RR}{\ensuremath{{\field{R}}}}
\newcommand{\NN}{\ensuremath{{\field{N}}}}
\newcommand{\kk}{\ensuremath{{\mathbbm{k}}}}
\newcommand{\sa}{\ensuremath{^{sa}}}  
\newcommand{\double}{\ensuremath{^{(2)}}}  
\newcommand{\mat}[1]{\left[\begin{matrix}#1
\end{matrix}\right]}
\newcommand{\commentout}[1]{}
\def\el{\ell}
\def\ll{\lambda}
\def\LL{\Lambda}
\newcommand{\cA}{\ensuremath{{\mathcal{A}}}}
\newcommand{\cB}{\ensuremath{{\mathcal{B}}}}
\newcommand{\cC}{\ensuremath{{\mathcal{C}}}}
\newcommand{\cD}{\ensuremath{{\mathcal{D}}}}
\newcommand{\cE}{\ensuremath{{\mathcal{E}}}}
\newcommand{\cF}{\ensuremath{{\mathcal{F}}}}
\newcommand{\cJ}{\ensuremath{{\mathcal{J}}}}
\newcommand{\cL}{\ensuremath{{\mathcal{L}}}}
\newcommand{\cM}{\ensuremath{{\mathcal{M}}}}
\newcommand{\cP}{\ensuremath{{\mathcal{P}}}}
\newcommand{\cT}{\ensuremath{{\mathcal{T}}}}
\newcommand{\cX}{\ensuremath{{\mathcal{X}}}}
\def\a{\alpha}
\def\b{\beta}
\def\g{\gamma}
\def\d{\partial}
\def\e{\epsilon}
\def\f{\varphi}
\def\s{\sigma}
\def\Sig{\Sigma}
\def\t{\tau}
\def\th{\theta}
\def\ov{\overline}
\def\ul{\underline}
\title{Continuous Cluster Categories {I}}
\author{Kiyoshi Igusa}
\thanks{The first author is supported by a grant from NSA}
\author{Gordana Todorov}
\subjclass[2000]{
18E30:16G20}
\begin{document}

\begin{abstract}
In \cite{IT09} we constructed topological triangulated categories $\cC_c$ as stable categories of certain topological Frobenius categories $\cF_c$. In this paper we show that these categories have a cluster structure for certain values of $c$ including $c=\pi$. The \emph{continuous cluster categories} are those $\cC_c$ which have cluster structure. We study the basic structure of these cluster categories and we show that $\cC_c$ is isomorphic to an orbit category $\cD_r/\ul F_s$ of the \emph{continuous derived category} $\cD_r$ if $c=r\pi/s$. In $\cC_\pi$, a \emph{cluster} is equivalent to a discrete lamination of the hyperbolic plane. We give the representation theoretic interpretation of these clusters and laminations.
\end{abstract}

\maketitle
 
 
 








\section*{Introduction} The continuous cluster categories $\cC_c$, by construction, may be considered to be cluster categories of type $A_\RR$. (See Section \ref{ss11}, Theorem \ref{C has cluster structure} and Theorem \ref{cluster structures for Cc}.) These are continuous versions of the cluster category of type $A_n$. Before describing the construction, we recall the standard definitions.

Cluster categories were introduced by Buan, March, Reineke, Reiten and Todorov in \cite{BMRRT}. Given any hereditary algebra $\LL$ over an algebraically closed field $\kk$, the cluster category $\cC(\LL)$ is an orbit category of the derived category $\cD^b(mod\,\LL)$ of bounded complexes over $\LL$ under the triangulated autoequivalence $F=\t^{-1}[1]$. This orbit category inherits the structure of a triangulated category from that of the derived category \cite{KellerOrbit}. Clusters in the cluster category are defined to be maximal collections of nonisomorphic indecomposable objects $T_i$ which are rigid and do not extend each other. Thus $Hom(T_i,\Sig T_j)=0$ for all $i,j$ where $\Sig X=T_j[1]\cong \t X$ in the cluster category.

In special case where $\LL$ is a hereditary algebra of type $A_n$ there was also a combinatorially construction by Caldero, Chapoton and Schiffler in \cite{CCS06}. In that construction, the indecomposable objects of the category are internal chords in a regular $(n+3)$-gon, morphisms are given by counterclockwise rotation and a cluster is a maximal collection of noncrossing internal chords giving a triangulation of the $(n+3)$-gon.

We will first give the idea of continuous cluster categories and a topological interpretation in 0.1, and an outline of the algebraic construction in 0.2, followed by a detailed description of the contents of the paper in 0.3.

\subsection{Idea of continuous cluster category}

The continuous cluster category $\cC=\cC_\pi$ was originally obtained by taking the cluster category of type $A_n$ and taking the limit as $n$ goes to infinity. Although our current construction is different, the idea is still the same. The Auslander-Reiten quiver of the cluster category of type $A_n$ embeds in a Moebius band as a discrete lattice with irreducible morphisms drawn as perpendicular line segments of equal length. As $n$ goes to infinity, the lattice of points (indecomposable objects) converges to all points in the open Moebius band (the points on the boundary are the projective injective objects of the Frobenius category and are therefore zero in the cluster category) and the lengths of the line segments representing irreducible morphisms goes to 0. 

The limiting category is a topological category. Since $\t X\cong \Sig X=X[1]$ converges to $X$ as $n\to \infty$, we have $\Sig X\cong X$ in the limit. However, a continuous triangulation of a topological additive category does not allow $\Sig X=X$ (except when the characteristic of $\kk $ is 2). So, we must have at least two objects in each isomorphism class of indecomposable objects. The minimum topological space must be a two-fold covering space of the open Moebius band. There are two possibilities (as shown in \cite{IT09}). One is the connected oriented covering which is geometrically easier to describe. Namely, we view the indecomposable objects as geodesics in the hyperbolic plane and the two isomorphic copies are represented by the two possible orientations of the corresponding geodesic. 

Clusters in the continuous cluster category are certain maximal sets of compatible objects. In the geometric picture, two geodesics are compatible if they do not cross. A maximal collection of noncrossing geodesics is a ``lamination''. However, not all laminations can be considered to be clusters since clusters need the crucial property that any object in a cluster can be exchanged for exactly one other object (up to isomorphism). Since laminations are closed sets, any object which is a limit of other objects in the lamination cannot be exchanged. So, we define a \emph{cluster} to be a discrete lamination (they are also known as ``simple laminations''). These correspond to ideal triangulations of the hyperbolic plane. Algebraically, two indecomposable objects $X,Y$ are \emph{compatible} if any nonzero morphism $X\to Y$, when completed to a distinguished triangle $X\to Y\to Z\to \Sig X\cong X$, makes the third  object $Z$ indecomposable.
 
\subsection{Construction of continuous cluster category}

In this paper we construct the continuous cluster category $\cC=\cC_\pi$ as an orbit category of the \emph{doubled continuous derived category} $\cD_\pi\double$, defined below, under a triangulated functor $\ul F_\pi$. More generally we take any $0<r\le s$ and define $\cC_{r,s}$ to be the orbit category $\cD_r\double/\ul F_s$, then we show that $\cC_{r,s}$ is isomorphic to the stable category $\cC_c$ of the Frobenius category $\cF_c$ constructed in \cite{IT09} where $c=r\pi/s$.

For any positive real number $r$ we can define $\cD_r$ to be the $\kk $-linear additive Krull-Schmidt category with indecomposable objects $M(x,y)$ where $x,y$ are real numbers with $|x-y|<r$. Hom sets are given by 
\[
\cD_r(M(x,y),M(a,b))=\begin{cases} \kk  & \text{if $x\le a<y+r$ and $y\le b<x+r$}\\
  0  & \text{otherwise}
    \end{cases}
\]
To construct $\cC_{r,s}$, we first ``double'' the category $\cD_r$ by taking two isomorphic copies of each indecomposable object, call them $M(x,y)_+, M(x,y)_-$. This doubled category is denoted $\cD_r\double$. Then $\cC_{r,s}$ is given by identifying $M(x,y)_\e$ with $\ul F_s M(x,y)_\e=M(y+s,x+s)_{-\e}$. This is accomplished by taking the orbit category: $\cC_{r,s}=\cD_r/\ul F_s$. It is easy to see that, up to isomorphism, $\cC_{r,s}$ depends only on the ratio $\frac{r}{s}$. To see the geometric picture, we take $s=\pi$ and view $M(x,y)_+$ as a directed geodesic from $x$ to $y+\pi$ on the circle $S^1=\RR/2\pi\ZZ$ and $M(x,y)_-$ as the same geodesic oriented the other way.

To give $\cD_r$ the structure of (continuously) triangulated categories, we construct a topological Frobenius category $\cB_{\le r}$ whose stable category is $\cD_r$. We also show that $\cF_c$ is isomorphic to the completed orbit category of $\cB_{\le r}$ under a triangulated functor $F_s$ whenever $c={r\pi/s}$.

As an additive category, $\cB_{\le r}$ is easy to describe. The indecomposable objects are $M(x,y)$ where $x,y$ are real numbers with $|x-y|\le r$ and the hom sets are
\[
\cB_{\le r}(M(x,y),M(a,b))=\begin{cases} \kk  & \text{if $x\le a$ and $y\le b$}\\
  0  & \text{otherwise}
    \end{cases}
\]
In other words, all morphism go right and up. Composition is given by multiplication of scalars. In other words, composition of basic morphisms (corresponding to $1\in \kk $) is basic unless it is forced to be zero. To show that this is a Frobenius category we need an exact structure. This is given by embedding the category in the abelian category of representations of the real line. We show in a later paper \cite{IT13} that the exact structure is uniquely determined.

Recall from \cite{HappelBook} that the stable category of a Frobenius category has the same set of objects and Hom sets given by quotienting out all morphisms which factor through a projective-injective object. For $\cB_{\le r}$ the projective injective objects are $M(x,y)$ where $|x-y|=r$.


\subsection{Contents of the paper} We describe the contents of the paper with more technical details.

In Section 1 we construct the continuous derived category $\cD_r$. We start with the abelian category $\cA$ of representations of the real line over the field $\kk $. An indecomposable object $V_S$ of $\cA$ is uniquely determined up to isomorphism by its support $S$ which must be a connected subset of $\RR$. We construct a strictly additive full subcategory $\cA_\RR\sa=add\,\cX$ where $\cX$ is the full subcategory of $\cA$ generated by objects $V_{(a,b]}$ where $a<b$ and we give it an obvious topology. The category $\cB$ is isomorphic to the exact category $\cA_\RR'$ which is the extension closed subcategory of $add\,\cX$ generated by all $V_{(a,b]}$ where $a<0<b$. The isomorphism sends $V_{(a,b]}$ to the object $M(x,y)$ in $\cB$ where $x=-ln(-a)$ and $y=ln\,b$.  Since $\cB\cong \cA_\RR'$ it is exact.

The Frobenius category $\cB_{\le r}$ is defined to be the strictly additive full category of $\cB$ generated by all $M(x,y)$ where $|x-y|\le r$. The stable category of $\cB_{\le r}$ is the continuous derived category $\cD_r$. Being the stable category of a Frobenius category it admits a triangulated structure which is specified by a choice of two-way approximation sequences $X\to IX\to \Sig X$ for every object $X$. This means that $IX$ is a left approximation of $X$ by a projective-injective object and a right approximation of $\Sig X$. We give the formula for our choice of two-way approximations and show that it leads to ``positive triangles'' as the basic distinguished triangles in the category. The idea of using approximations to define triangulated structures comes from \cite{BM94}.

In Section 2 we double the categories $\cD_r$ and $\cB_{\le r}$ and show that the orbit category of $\cD_r\double$ under a strictly triangulated strictly additive functor $\ul F_s:\cD_r\double\to\cD_r\double$ is equivalent to the stable category $\cC_c$ of the continuous Frobenius category $\cF_c$ constructed in \cite{IT09} for $c=r\pi/s$. To do this we take the completed orbit category $\cB_{\le r}\double/F_s^\wedge$ whose morphism sets are products \[
\cB_{\le r}\double/F_s^\wedge(X,Y)=\prod_{n\in\ZZ}\cB_{\le r}\double(X,F_s^nY)\]
The equivalence $\hat G:\cB_{\le r}\double/F_s^\wedge\to \cF_{r\pi/s}$ is induced by a continuous strictly additive functor $G:\cB_{\le r}\double\to \cF_{r\pi/s}$ which is uniquely determined by the restriction of its object map to the indecomposable objects (by an argument similar to the one in Prop \ref{uniqueness of continuous functors}).

In Section 3 we consider maximal compatible sets of indecomposable objects of the continuous cluster category $\cC=\cC_\pi=\cC_{\pi,\pi}$. We show that these are laminations of the hyperbolic plane. We define a \emph{cluster} to be a discrete lamination. In Section 4 we show that all clusters are equivalent under strictly additive automorphisms of the cluster category. We also describe all such automorphisms. In section 3 we also give a short proof of a theorem of W. Thurston \cite{Th} showing that laminations are locally homeomorphic to compact subsets of $\RR$ which are arbitrary.

Finally, in Section 5, we show that clusters in $\cC$ form a cluster structure as defined in \cite{BIRSc}. We also show that mutation of clusters is given by the octahedral axiom. For $r<s$, the continuous orbit category $\cC_{r,s}$ also has a cluster structure if and only if $\frac{r}{s}=(n+1)/(n+3)$.


%
%

%
%


\section{The continuous derived category}


We will construct an exact category $\cB$ as an extension closed subcategory of an abelian category $\cA_\RR$ and choose certain full subcategory $\cB_{\le r}$ for $c>0$ which we show to be Frobenius categories with $\cB_r$ being the full subcategory of projective-injective objects. Then the stable category $\cD_r=\ul{\cB}_{\le r}$ is a triangulated category which we call the \emph{continuous derived category}. We briefly examine the topological structure of these category. We also examine some of the distinguished triangles in $\cD_r$. All of these categories will be $\kk $-categories where $\kk $ is a fixed field. We assume there is a unique zero object in the category of $\kk $-vector spaces. (Choose one $0$ and discard all the others.)

\subsection{The abelian category $\cA$}\label{ss11}

The category $\cA_\RR$ will be defined to be the category of representations of the ``quiver'' $\RR$. This is similar to the representations of infinite linearly ordered posets considered in \cite{vR}, except that $\RR$ is not locally finite, so we do not have Serre duality. 

\begin{defn}
A \emph{representation} of $\RR$ over a field $\kk $ is defined to be a functor $V$ which assigns a $\kk $-vector space $V_x$ to every $x\in\RR$ and a $\kk $-linear map $V_{xy}:V_y\to V_x$ for all $x< y$ so that
\begin{enumerate}
\item $V_{xy}V_{yz}=V_{xz}$ for all $x< y<z$ and
\item $(\forall x\in\RR)(\exists z<x)(\forall y\in(z,x))V_{yx}:V_x\to V_y$ is an isomorphism.
\end{enumerate}
A morphism between two such representations $f:V\to W$ is a collection of linear maps $f_x:V_x\to W_x$ so that $W_{xy}f_y=f_xV_{xy}$ for all $x<y$.
\end{defn}

Given any representation $V$, we define a \emph{regular value} of $V$ to be any real number $x$ so that $V_{xy}:V_y\to X_x$ is an isomorphism for all $y>x$ sufficiently close to $x$. We define the \emph{critical values} of $V$ to be the other elements of $\RR$. Let $\cA_{\RR}$ be the category of representations $V$ of $\RR$ over $\kk$ which are finite dimensional at every point of $\RR$ and which have only finitely many critical values. For every $S$ of $\RR$ of the form $(a,b], (a,\infty),(-\infty,b]$ or $(-\infty,\infty)=\RR$ we define the representation $V_S$ of $\RR$ as $V_x=\kk$ for all $x\in S$ and $V_{xy}:V_y=\kk\to V_x=\kk$ is the identity map for all $x,y\in S$. Then $V_S$ is an object of $\cA_\RR$.

The following observation can be used to reduce statements involving finitely many objects of the category $\cA_\RR$ to statements about representations of the quiver $A_{n}$ with straight orientation:
$
	1\ot 2\ot \cdots\ot n
$.
\begin{prop}
For any finite subset $a_\ast=\{a_1,a_2,\cdots,a_n\}$ of $\RR$, let $\cA_{a_\ast}$ be the full subcategory of $\cA_\RR$ of all representations whose critical values lie in the set $a_\ast$. Then $\cA_{a_\ast}$ is equivalent to the category of representations of the quiver $A_{n+1}$ with straight orientation as above. The quiver representation $W$ corresponding to $V\in\cA_{a_\ast}$ is given by $W_i=V_{a_i}$ for $ i\le n$ and $W_{n+1}=V_{a_n+1}$.\qed
\end{prop}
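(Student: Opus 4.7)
The plan is to produce an explicit equivalence by sending a representation of $\RR$ to its restriction to the $n+1$ sample points $a_1<a_2<\cdots<a_n<a_n+1$, and inversely extending a quiver representation into a locally constant representation of $\RR$.

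First I would analyze the shape of an arbitrary $V\in\cA_{a_\ast}$. Condition (2) asserts left-local constancy of $V$ at every point, while a regular value is additionally right-locally constant. Since by hypothesis every point outside $a_\ast$ is regular, a short open-closed argument on each connected component of $\RR\setminus a_\ast$ shows that $V$ is constant on each half-open interval $(a_{i-1},a_i]$ (setting $a_0=-\infty$ and $a_{n+1}=+\infty$), with all structure maps inside such an interval being isomorphisms. Normalizing, one may canonically identify $V_x=V_{a_i}$ with $V_{xy}=\mathrm{id}$ for $a_{i-1}<x\le y\le a_i$, and $V_x=V_{a_n+1}$ for $x>a_n$. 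Hence $V$ is determined up to canonical isomorphism by the $n+1$ vector spaces $V_{a_1},\dots,V_{a_n},V_{a_n+1}$ and the $n$ transition maps $V_{a_{i-1},a_i}\colon V_{a_i}\to V_{a_{i-1}}$ together with $V_{a_n,a_n+1}\colon V_{a_n+1}\to V_{a_n}$.

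Next, I would define the functor $\Phi:\cA_{a_\ast}\to \mathrm{rep}(A_{n+1})$ by $W_i=V_{a_i}$ for $i\le n$, $W_{n+1}=V_{a_n+1}$, with arrows $W_{i+1}\to W_i$ given by the transition maps above, and acting on morphisms in the evident way. Fully faithfulness is then immediate from the previous paragraph: a morphism $f\colon V\to V'$ in $\cA_{a_\ast}$, being natural with respect to the identity maps inside every interval $(a_{i-1},a_i]$, is constant on that interval, hence determined by its values $f_{a_i}$ and $f_{a_n+1}$; conversely any tuple of linear maps satisfying the quiver-naturality squares extends uniquely to a morphism of $\RR$-representations.

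For essential surjectivity, given $W\in \mathrm{rep}(A_{n+1})$ I would define $V_x=W_i$ on $(a_{i-1},a_i]$ (and $W_{n+1}$ on $(a_n,\infty)$) and take $V_{xy}$ to be the appropriate composition of quiver maps (or the identity when $x,y$ lie in the same interval). Functoriality is clear from the definition, and property (2) needs to be checked at each $x\in\RR$: away from $a_\ast$, $V$ is locally constant so the required left neighborhood is an entire interval $(a_{i-1},a_i)$; at $x=a_i$, $V$ is by construction constant on $(a_{i-1},a_i]$, which contains a left neighborhood of $a_i$. The critical values of this $V$ are contained in $a_\ast$, so $V\in\cA_{a_\ast}$ and $\Phi(V)=W$. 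The only mild technical point is this last verification of axiom (2), but it is immediate from the half-open choice of intervals.
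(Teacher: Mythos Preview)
Your argument is correct and is precisely the natural proof; the paper itself states this proposition with a bare \qed{} and gives no argument at all, treating the equivalence as evident from the definitions. So there is nothing to compare against---you have simply supplied the standard details the authors chose to omit.

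One small notational slip: when you write ``the $n$ transition maps $V_{a_{i-1},a_i}$ together with $V_{a_n,a_n+1}$'', the count is off by one if taken literally (there are only $n-1$ maps $V_{a_{i-1},a_i}$ for $2\le i\le n$, and then the extra map $V_{a_n,a_n+1}$ brings the total to $n$, matching the $n$ arrows of $A_{n+1}$). This is clearly what you intend, so it is a phrasing issue, not a mathematical one.
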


Given the well-known properties of the quivers of type $A_n$ we get the following.

\begin{cor}\begin{enumerate}
\item $\cA_\RR$ is an abelian Krull-Schmidt category. 
\item The projective objects of $\cA_\RR$ are those in which the structure maps $V_{xy}$ are monomorphism for all $x<y$. Indecomposable projective objects are isomorphic to $P_a:=V_{(-\infty,a]}$ or $V_\RR$.
\item $\cA_\RR$ is hereditary (all subobjects of projective objects are projective).
\item The injective objects are those for which $V_{xy}$ is an epimorphism for all $x<y$. Indecomposable injective objects are isomorphic to $I_a:=V_{(a,\infty)}$ or $V_\RR$.
\item The remaining indecomposable objects are isomorphic to $V_{(a,b]}$ for some $a<b$.\qed
\end{enumerate}
\end{cor}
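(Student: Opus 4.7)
The plan is to bootstrap everything from the preceding proposition: any finite collection of objects of $\cA_\RR$ lies in $\cA_{a_\ast}$ for $a_\ast$ the union of their critical values, and $\cA_{a_\ast}$ is equivalent to representations of the straight $A_{n+1}$-quiver. So every finite-diagrammatic question about $\cA_\RR$ reduces to a classical one.

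First, for (1), I would form kernels and cokernels pointwise; their critical values lie among those of the inputs, so the result is in $\cA_\RR$, and the exactness axioms are inherited from each $\cA_{a_\ast}$. Krull--Schmidt follows because any indecomposable $V$ has the same endomorphism ring in $\cA_\RR$ as in any $\cA_{a_\ast}$ containing it (by fullness of the inclusion), and that ring is local in the $A_{n+1}$ world. Next, for the classifications (2), (4), and (5), I would pick the smallest $a_\ast$ realizing the critical values of a given indecomposable $V$ and invoke the classification of indecomposable $A_{n+1}$-reps as intervals $[i,j]$. Condition (2) of the representation definition forces $V_y=V_{a_k}$ for $y\in(a_{k-1},a_k]$, so $[i,j]$ translates back to $V_S$ with $S=(a_{i-1},a_j]$, and the boundary cases $i=1$ or $j=n+1$ produce the unbounded endpoints. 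This yields exactly the listed objects $V_{(a,b]}$, $P_a=V_{(-\infty,a]}$, $I_a=V_{(a,\infty)}$, and $V_\RR$.

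The crux is the intrinsic characterization: $V$ is projective in $\cA_\RR$ iff every structure map $V_{xy}$ is monic (dually for injectives). Given monic structure maps, $V$ is projective in each $\cA_{c_\ast}$ by the classical $A_{n+1}$ result, so for any epi $M\onto N$ in $\cA_\RR$ and any $f\colon V\to N$, enlarging $c_\ast$ to absorb the critical values of $M$, $N$, $V$ produces the required lift. Conversely, projectivity in $\cA_\RR$ transfers to any $\cA_{a_\ast}$ containing $V$ because $\cA_{a_\ast}$ is full and pointwise-epis in it remain epis in $\cA_\RR$; this forces each $V_{a_i,a_{i+1}}$ to be monic, and monicity at arbitrary $x<y$ then follows by factoring through these (structure maps are identities inside each $(a_{i-1},a_i]$). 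Matching these intrinsic conditions against the classification of the previous paragraph identifies the indecomposable projectives as $V_{(-\infty,a]}$ and $V_\RR$ and the indecomposable injectives as $V_{(a,\infty)}$ and $V_\RR$. Property (3) is then immediate: any subobject of $V_{(-\infty,a]}$ or $V_\RR$ inherits monic structure maps (and satisfies condition (2), ruling out half-open-on-the-right supports), hence is of the form $V_{(-\infty,a']}$, $V_\RR$, or $0$, all projective by the criterion.

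The main technical obstacle will be exactly this back-and-forth between projectivity in $\cA_{a_\ast}$ and in $\cA_\RR$, since a full subcategory does not automatically share projectives with the ambient. The intrinsic \emph{structure maps are monic} characterization is what resolves the issue: it is a purely local property, insensitive to how large one takes $a_\ast$, and hence makes the two notions of projectivity coincide.
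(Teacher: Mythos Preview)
Your proposal is correct and follows exactly the approach the paper intends: the corollary is stated with a \qed and no proof beyond the sentence ``Given the well-known properties of the quivers of type $A_n$ we get the following,'' so the intended argument is precisely the reduction to $\cA_{a_\ast}\simeq\mathrm{rep}(A_{n+1})$ via the preceding proposition. You have simply fleshed out the details the paper leaves implicit, in particular the point that projectivity transfers between $\cA_\RR$ and the full subcategories $\cA_{a_\ast}$ via the intrinsic ``all structure maps monic'' criterion.
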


Continuing our list of easy properties of the category $\cA_\RR$ we have:

\begin{cor}\label{inherited properties of representations of type An}
\begin{enumerate}
\item $\cA_\RR$ has enough projectives and injectives.
\item Homomorphisms between the standard indecomposable objects $V_{(a,b]}$ are given by scalars in $\kk$:
\[
	\cA(V_{(a,b]},V_{(c,d]})=\begin{cases}
	\kk  & \text{if } a\le c<b\le d\\
	0 & \text{otherwise}
	\end{cases}
\]
\item Composition of morphisms between standard indecomposable objects is given by multiplication of scalars.
\item
\[
	\Ext_\cA(V_{(c,d]},V_{(a,b]})\cong\begin{cases}
	\kk  & \text{if } a< c\le b< d\\
	0 & \text{otherwise}
	\end{cases}
\]
\end{enumerate}
\end{cor}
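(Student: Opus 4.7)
The overall strategy is to reduce every assertion to the well-known theory of representations of the linear quiver $A_n$ with straight orientation by invoking the preceding proposition. Any one of the four statements involves only finitely many objects, hence finitely many critical values, and we can choose a finite set $a_\ast = \{a_1<\cdots<a_n\}\subset\RR$ containing all relevant endpoints. Inside $\cA_{a_\ast}$, which is equivalent to $\mathrm{rep}(A_{n+1})$, each $V_{(a_i,a_j]}$ corresponds to the standard interval module, and the classical computations transfer verbatim.

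For (1), given $V\in\cA_\RR$ with critical values $a_\ast$, Krull--Schmidt (part (1) of the preceding corollary) writes $V$ as a finite direct sum of objects of the forms $V_{(a_i,a_j]}$, $V_{(a_i,\infty)}$, $P_{a_i}=V_{(-\infty,a_i]}$, and $V_\RR$. Each of the first kind admits an epimorphism from the projective $P_{a_j}$ induced by the inclusion of intervals, and a monomorphism into the injective $I_{a_i}=V_{(a_i,\infty)}$; the other three types are themselves projective or injective. Summing these over indecomposable summands produces the required projective cover and injective envelope of $V$.

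For (2), apply the proposition to $a_\ast = \{a,b,c,d\}$, so that $V_{(a,b]}$ and $V_{(c,d]}$ become two interval modules over $A_{n+1}$ with straight orientation. The standard computation for such a quiver gives $\Hom=\kk$ exactly when the target's left endpoint lies weakly between the source's endpoints and the target's right endpoint is weakly larger, i.e. $a\le c<b\le d$. If one prefers a direct check, a morphism $f\colon V_{(a,b]}\to V_{(c,d]}$ is a scalar $f_x\in\kk$ at each $x\in(a,b]\cap(c,d]$, forced to be constant by the structure-map compatibility $W_{xy}f_y=f_xV_{xy}$; the same compatibility forces $f$ to vanish unless $a\le c$ and $b\le d$, because otherwise there exist $x<y$ for which one of $V_{xy},W_{xy}$ is the identity while the other is zero. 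Statement (3) follows immediately: the basic morphisms in $\mathrm{rep}(A_{n+1})$ with straight orientation compose by scalar multiplication, and the pointwise description makes this transparent back in $\cA_\RR$.

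For (4), apply $\Hom(-,V_{(a,b]})$ to the two-term projective resolution $0\to P_c\to P_d\to V_{(c,d]}\to 0$ (which exists because $\cA_\RR$ is hereditary, by the preceding corollary). Then
\[
	\Ext_\cA(V_{(c,d]},V_{(a,b]})\cong \coker\bigl(\Hom(P_d,V_{(a,b]})\to\Hom(P_c,V_{(a,b]})\bigr),
\]
and part (2) (with the formal conventions $-\infty\le a$) computes each $\Hom$ term as $\kk$ or $0$ together with an isomorphism when both are nonzero. Case analysis on the four relative positions of $\{a,b\}$ and $\{c,d\}$ shows the cokernel is $\kk$ precisely when $a<c\le b<d$. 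Alternatively, one just quotes the standard Ext computation in $\mathrm{rep}(A_{n+1})$ with straight orientation applied to $\cA_{a_\ast}$ for $a_\ast=\{a,b,c,d\}$.

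The only subtlety — and this is the main obstacle — is bookkeeping the strict versus non-strict inequalities at the boundaries, which arise because all standard intervals are half-open of the form $(\cdot,\cdot]$. Consequently the Hom condition has one strict inequality ($c<b$) among three weak ones, and the Ext condition has the strictness reversed. The reduction to $A_{n+1}$ makes this a finite, routine verification, but it must be performed with care.
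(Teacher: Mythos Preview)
Your proposal is correct and follows essentially the same approach as the paper: both reduce everything to the classical theory of $A_n$ representations via the preceding proposition identifying $\cA_{a_\ast}$ with $\mathrm{rep}(A_{n+1})$. The paper's own proof is much terser---it simply says the result ``follows from very well-known facts about representations of quivers of type $A_n$'' and then records the explicit form of the basic morphism and the basic extension---whereas you supply the details (notably the projective-resolution computation for (4)), but there is no genuine difference in method.
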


\begin{proof}
This follows from very well-known facts about representations of quivers of type $A_n$. We should only point out that the homomorphism $V_{(a,b]}\to V_{(c,d]}$ corresponding to the scalar $s\in \kk $ is given by the composition
\[
	V_{(a,b]}\onto V_{(c,b]}\xrarrow{s}V_{(c,b]} \into V_{(c,d]}
\]
This is $s$ times the \emph{basic morphism} which is the identity on $V_{(c,b]}$ and zero outside this interval and statement (3) follows from the fact that any composition of basic morphisms is a basic morphism or zero. Also the nonzero extensions of $V_{(a,b]}$ by $V_{(c,d]}$ all have the form:
\[
	V_{(a,b]}\cof V_{(c,b]}\oplus V_{(a,d]}\onto V_{(c,d]}\to 0
\]
where $V_{(c,b]}:=0$ when $c=b$ and we will discuss later the choices of the morphisms in the basic extension sequence.
\end{proof}

\subsection{Strictly additive and topological categories}

The category $\cA_\RR$ is equivalent to a ``strictly additive'' full subcategory $\cA_\RR\sa$ defined to be the additive category generated by the standard indecomposable objects $V_{(a,b]}$. Thus objects of $\cA_\RR\sa$ are finite ordered sequences of standard objects $V_{(a_1,b_1]}\oplus\cdots \oplus V_{(a_n,b_n]}$ where $n\ge0$.

An additive category $\cC$ is defined to be \emph{strictly additive} if direct sum is strictly associative on objects and morphisms and has a strict identity. (So, $\cC$ has a distinuished zero object $0$ so that $0\oplus X=X=X\oplus 0$.) A functor $\Phi$ between strictly additive categories will be called \emph{strictly additive} if it strictly commutes with direct sum and takes the distinguished zero object of the first category to the distinguished zero object of the second category. So, $\Phi(X\oplus Y)= \Phi  X\oplus \Phi Y$ and $\Phi(f\oplus g)= \Phi f\oplus \Phi g$. A strictly additive functor on a strictly additive Krull-Schmidt category is uniquely determined by its value on the full subcategory of indecomposable objects since $\Phi$ maps any morphism $\sum f_{ij}:\oplus X_j\to \oplus Y_i$ to $\sum \Phi f_{ij}:\oplus \Phi X_j\to \oplus\Phi Y_i$ because 
\[
f_{ij}=0\oplus f_{ij}\oplus 0:(\cdots)\oplus X_j\oplus(\cdots)\to(\cdots)\oplus Y_i\oplus(\cdots)
\]
which maps to $0\oplus \Phi f_{ij} \oplus 0$ since $\Phi$ is strictly additive.

This emphasis on strictly additive categories is motivated by topological considerations. Recall that a \emph{topological category} is a small category together with a topology on the set of objects and the set of all morphisms so that the structure maps of the category (source, target, composition) are continuous mappings. A \emph{continuous functor} between topological categories is a functor $\Phi$ which is a continuous mapping on object sets and morphism sets. A \emph{topological $\kk$-category} is a topological category which is also a $\kk$-category where addition and scalar multiplication of morphisms is continuous.

If $\cC$ is a topological category then $add\,\cC$ is also a topological category with object set $\coprod_{n\ge0} Ob(\cC)^n$ with product topology on $Ob(\cC)^n$ and morphism set the subspace of \[\coprod_{n,m\ge0}Ob(\cC)^n\times Mor(\cC)^{nm}\times Ob(\cC)^m\]consisting of triples $((X_i)_{1\le i\le n},(f_{ij})_{1\le i\le n,1\le j\le m},(Y_j)_{1\le j\le m})$ so that each $f_{ij}\in \cC(X_i,Y_j)$. Any continuous functor between topological additive categories $\Phi:\cC\to \cD$ induces a strictly additive continuous functor $add\,\cC\to add\,\cD$ also called $\Phi$ by $\Phi((X_i),(f_{ij}),(Y_j))=((\Phi X_i),(\Phi f_{ij}),(\Phi Y_j))$.

As an example, let $\cX$ be the $\kk$-category of standard objects $V_{(a,b]}$. Then
\[
Ob(\cX)= \{(a,b)\in \RR^2\,|\, a<b\}.\]
 We take the usual metric topology on this space. The morphism set of $\cX$ is $Mor(\cX)=\kk \times M$ where \[
 M= \{(a,b,c,d)\in \RR^4\,|\, a<b, c<d, a\le c, b\le d\}\subset Ob(\cX)^2\]
 We give $\kk$ the discrete topology and $\RR^4$ the usual topology. This defines a topological $\kk$-category structure on $\cX$ with the key property that $Mor(\cX)$ is a covering space of $M\subseteq Ob(\cX)^2$. Thus $\cA_\RR\sa=add\,\cX$ is a topological additive category.
 
 As an example of the use of this structure we have the following.
 
 \begin{prop}\label{uniqueness of continuous functors}
 Any continuous strictly additive $\kk$-linear endofunctor on $\cA_\RR\sa$ is uniquely determined by its value on indecomposable objects.
 \end{prop}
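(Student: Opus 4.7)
The plan is to reduce the statement to determining $\Phi$ on the basic morphisms between standard indecomposable objects, and then to exploit path-connectedness of the parameter space $M$ together with discreteness of $\kk$ to pin those values down from the already-determined values on identity morphisms.

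First I would use strict additivity to reduce from $\cA_\RR\sa = add\,\cX$ to its subcategory $\cX$ of standard indecomposables, and then use $\kk$-linearity to reduce further to determining $\Phi(\b_{abcd})$ for each basic morphism $\b_{abcd}:V_{(a,b]}\to V_{(c,d]}$ (that is, the morphism corresponding to $1\in\kk$ in $\cX(V_{(a,b]},V_{(c,d]})\cong\kk$, for each $(a,b,c,d)\in M$). I would then exploit path-connectedness of $M$ via the straight-line path
\[
\gamma_t = (a,b,(1-t)c+ta,(1-t)d+tb),\qquad t\in[0,1],
\]
which remains in $M$ because $(1-t)c+ta\le c<b\le(1-t)d+tb$, and which joins $(a,b,c,d)$ to the diagonal point $(a,b,a,b)$ at which the basic morphism is the identity $1_{V_{(a,b]}}$. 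Under the continuous map $\Phi$ on $Mor(\cX)=\kk\times M$, the family $\b_{\gamma_t}$ maps to a continuous family $\Phi(\b_{\gamma_t})$ in $Mor(\cA_\RR\sa)$ whose value at $t=1$ equals $\Phi(1_{V_{(a,b]}})=1_{Y_{a,b}}$, where $Y_{a,b}:=\Phi V_{(a,b]}$, and hence is already pinned down by the values of $\Phi$ on indecomposable objects.

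To extract the conclusion I would use that continuity of $\Phi$ on the connected object set $Ob(\cX)\subset\RR^2$ forces its image to lie in a single component $Ob(\cX)^{n_0}$ of $Ob(\cA_\RR\sa)=\coprod_n Ob(\cX)^n$, so every $Y_{x,y}$ is a direct sum of exactly $n_0$ standard indecomposables whose parameters depend continuously on $(x,y)$. Consequently $\Phi(\b_{\gamma_t})$ is described by an $n_0\times n_0$ matrix of scalars in the discrete field $\kk$, each entry locally constant in $t$, and so the entire path $\Phi(\b_{\gamma_t})$ is determined by its terminal value at $t=1$, yielding $\Phi(\b_{abcd})=\Phi(\b_{\gamma_0})$ as a function of the objects alone. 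The main obstacle is a technical wrinkle: the zero pattern of the matrix (which entries are forced to vanish because the relevant summand-level Homs in $\cX$ vanish) can jump at isolated parameter values where the closed inequalities defining $M$ degenerate to equalities. I would handle this by first running the locally-constant argument on the open dense subset of $M$ on which the zero pattern is locally constant, and then extending to all of $M$ by continuity of $\Phi$ on $Mor(\cX)$.
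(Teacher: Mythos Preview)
Your proof is correct and follows the same core strategy as the paper: reduce to $\cX$ by strict additivity, to basic morphisms by $\kk$-linearity, then use path-connectedness of $M$ together with discreteness of $\kk$ to propagate from the identities (where $\Phi$ is pinned down by the object map) to all basic morphisms. The paper packages the final step as a covering-space lifting argument---$Mor(\cX)=\kk\times M\to M$ has discrete fiber, so a continuous lift is unique once its value at one point of each sheet is known---whereas you unpack the same idea with an explicit straight-line path to the diagonal and a locally-constant-scalar argument. One place where you are actually more careful than the paper: the paper's phrase ``the given object map $\Phi:M\to M$'' tacitly assumes $\Phi$ sends each indecomposable to an indecomposable, while you correctly allow $\Phi V_{(a,b]}\in Ob(\cX)^{n_0}$ and work with $n_0\times n_0$ matrices. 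Your zero-pattern worry is the honest price of that generality, and the open-dense-plus-continuity patch you sketch is adequate.
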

 
 \begin{proof}
 If an endofunctor $\Phi$ sends $X$ to $\Phi X$, it must send the identity morphism of $X$ to the identity morphism of $\Phi X$. Since $\cX(X,X)\cong \kk $ and $\Phi$ is $\kk$-linear, the value of $\Phi$ on every endomorphism of $X$ is determined. This determines the value of $\Phi$ on one point in each sheet of the covering space $Mor(\cX)\to M\subset Ob(\cX)$. Since $M$ is path connected, there is a unique continuous map $Mor(\cX)\to Map(\cX)$ which covers the given object map $\Phi:M\to M$. So, $\Phi$, if continuous, is uniquely determined on $Mor(\cX)$. Since $\Phi$ is strictly additive, its value on all of $add\,\cX=\cA_\RR\sa$ is uniquely determined.
 \end{proof}


\subsection{The exact category $\cB$}

We will construct a strictly additive category $\cB$ which is isomorphic to a full subcategory $\cA_\RR'$ of $\cA_\RR^{sa}$. Since $\cA_\RR'$ is an extension closed full subcategory of the abelian category $\cA_\RR^{sa}$ our category $\cB$ will be an exact category.

\begin{prop}\label{characterization of objects of B}
Suppose that $V$ is an object of $\cA_\RR$ with the following properties.
\begin{enumerate}
\item $0$ is a regular value of $V$.
\item $V_x=0$ for $|x|$ sufficiently large.
\item $V_{0x}$ is a monomorphism for all $x>0$.
\item $V_{x0}$ is an epimorphism for all $x<0$.
\end{enumerate}
Then $V$ is isomorphic to a direct sum of representations of the form $V_{(a,b]}$ where $a<0<b$ (with $a,b$ both finite). Conversely, all such representations have the properties listed above.\qed
\end{prop}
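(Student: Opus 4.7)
The plan is to use the Krull--Schmidt structure of $\cA_\RR$ established in the corollary together with the classification of indecomposables to reduce the statement to a finite case analysis on the intervals $S_i$ appearing in a decomposition $V\cong\bigoplus V_{S_i}$. The converse direction is immediate: for $V_{(a,b]}$ with $a<0<b$, the structure maps $V_{xy}$ are either identities on $\kk $ or zero, and conditions (1)--(4) are read off directly.

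For the forward direction, I would first invoke Krull--Schmidt (part (1) of the corollary) to write $V\cong\bigoplus_i V_{S_i}$ where each $S_i$ is a connected subset of $\RR$ of one of the types $(a,b]$, $(-\infty,a]$, $(a,\infty)$, or $\RR$. The argument then proceeds by eliminating the forbidden possibilities summand by summand, using that every structure map $V_{xy}$ of a direct sum is the direct sum of the component maps, so a mono (resp.\ epi, resp.\ iso) condition on $V_{xy}$ transfers to each summand. Condition (2) (bounded support) eliminates $V_\RR$, $V_{(-\infty,a]}$, and $V_{(a,\infty)}$, leaving only $S_i=(a_i,b_i]$ with $a_i,b_i$ finite.

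It then remains to show $a_i<0<b_i$ for every remaining summand. Condition (1), that $0$ is a regular value, forces $V_{0y}$ to be iso for $y>0$ small, which rules out $a_i=0$ (for then $(V_{S_i})_0=0$ while $(V_{S_i})_y=\kk$ for $y\in(0,b_i)$) and rules out $b_i=0$ (for then $(V_{S_i})_0=\kk$ while $(V_{S_i})_y=0$ for $y>0$). Condition (3) says $V_{0x}$ is mono for every $x>0$; if some summand had $a_i>0$, then choosing $x\in(a_i,b_i]$ gives a map $\kk\to 0$ in that summand, which is not mono. Condition (4) symmetrically eliminates summands with $b_i<0$. Together with $a_i\ne 0\ne b_i$, this forces $a_i<0<b_i$, completing the proof.

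I do not anticipate a serious obstacle here: the main content is the Krull--Schmidt decomposition, which is already in hand, and the rest is a verification that each of the four hypotheses cuts down the class of allowable indecomposable summands in the stated way. The only point that warrants care is consistently checking that conditions formulated on $V$ transfer to each direct summand, which follows from the fact that the structure maps of a direct sum representation are diagonal.
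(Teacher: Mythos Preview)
Your proposal is correct and is exactly the argument the paper has in mind: the proposition is stated with a \qed\ and no proof, since it follows immediately from the Krull--Schmidt decomposition and classification of indecomposables in the preceding corollary. Your summand-by-summand elimination is the intended verification; the only minor redundancy is that condition (3) already rules out $a_i=0$ (the map $\kk\to 0$ is not mono), so condition (1) is really only needed to exclude $b_i=0$.
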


We define $\cA_\RR'$ to be the additive full subcategory of $\cA_\RR\sa$ generated by the indecomposable nonprojective and noninjective objects $V_{(a,b]}$ where $a<0<b$. In other words, $\cA_\RR'$ is the strictly additive version of the full subcategory of $\cA_\RR$ described by the above proposition. 

\begin{prop}
$\cA_\RR'$ is an extension closed subcategory of $\cA_\RR^{sa}$.
\end{prop}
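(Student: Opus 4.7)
My plan is to reduce the proposition to Proposition \ref{characterization of objects of B}: I will check that the four characterizing properties listed there are preserved under extension in $\cA_\RR^{sa}$. Let $0\to X\to Y\to Z\to 0$ be short exact with $X,Z\in\cA_\RR'$. Since $\cA_\RR'$ is defined by its indecomposable summands $V_{(a,b]}$ with $a<0<b$, it suffices via the equivalence between $\cA_\RR^{sa}$ and $\cA_\RR$ to show that $Y$ satisfies properties (1)--(4) of Proposition \ref{characterization of objects of B}; the proposition will then realize $Y$ (up to isomorphism) as a finite direct sum of standard indecomposables of the required form, placing it in $\cA_\RR'$.

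For every $x<y$ in $\RR$, evaluation at $x$ is exact (it is the projection onto the $x$-coordinate of the underlying $A_n$-representation obtained by restricting to the finite set of critical values of $X,Y,Z$ together with $x,y$), and naturality in $x<y$ yields a commutative diagram with exact rows
\[
\xymatrix{
0 \ar[r] & X_y \ar[r]\ar[d]_{X_{xy}} & Y_y \ar[r]\ar[d]_{Y_{xy}} & Z_y \ar[r]\ar[d]_{Z_{xy}} & 0 \\
0 \ar[r] & X_x \ar[r] & Y_x \ar[r] & Z_x \ar[r] & 0.
}
\]
Property (2) is immediate: if $X_x=Z_x=0$ then $Y_x=0$, and since $X,Z\in\cA_\RR'$ vanish for $|x|$ large, so does $Y$. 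Property (3) is a kernel chase in the diagram above with $x=0$ and $y>0$: if the outer vertical arrows are monic then so is the middle one. Property (4) is the dual image chase with $y=0$ and $x<0$: if the outer verticals are epic, then given $w\in Y_0$ one lifts its image in $Z_0$ to $\tilde z\in Y_x$, then adjusts by an element coming from $X$ (using that $X_{x0}$ is epi and $X_0\hookrightarrow Y_0$) to produce a preimage of $w$.

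For property (1), choose $\epsilon>0$ small enough that both $X_{0y}$ and $Z_{0y}$ are isomorphisms for all $y\in(0,\epsilon)$ (possible because both $X$ and $Z$ have $0$ as a regular value); the five lemma applied to the diagram then forces $Y_{0y}$ to be an isomorphism on the same interval, so $0$ is regular for $Y$ as well. Combining (1)--(4) and Proposition \ref{characterization of objects of B} shows $Y\in\cA_\RR'$.

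The main obstacle is really just bookkeeping: making certain that evaluation at a point defines an exact functor on $\cA_\RR$ (so that the rows of the diagram are genuinely exact) and that the regular-value condition at $0$ is handled by an \emph{isomorphism} argument (five lemma) rather than the easier mono/epi chases used for properties (3) and (4). Once those points are secured, the argument is a standard diagram chase.
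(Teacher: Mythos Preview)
Your proof is correct and follows essentially the same approach as the paper: both reduce to the characterizing properties of Proposition~\ref{characterization of objects of B} and verify them via the evident commutative diagram with exact rows. The paper only writes out the monomorphism check (property (3)) and dismisses the rest as similar, whereas you spell out all four conditions (including the five-lemma argument for regularity at $0$), but the underlying strategy is the same.
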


\begin{proof}
Suppose that $A\cof B\onto C$ is a short exact sequence in $\cA_\RR^{\sa}$ and $A,C$ lie in $\cA_\RR'$. Consider Condition (3). For any $x>0$ we have the following commuting diagram with exact rows.
\[
\xymatrix{
0\ar[r] &
	A_0\ar[d]^{A_{0x}}\ar[r] &B_0\ar[d]^{B_{0x}}\ar[r] &C_0\ar[d]^{C_{0x}}\ar[r] & 0\\
0 \ar[r]& 
	A_x \ar[r]&B_x \ar[r]&C_x \ar[r]&
	0
	}
\]
Since $A_{0x},C_{0x}$ are monomorphisms, so is $B_{0x}$. So, $B$ satisfies condition (3). The other conditions are verified in a similar way and we conclude that $B$ is an object of $\cA_\RR'$.
\end{proof}

Thus, $\cA_\RR'$ is an exact category. The exact sequences of $\cA_\RR'$ are defined to be the short exact sequences in $\cA_\RR\sa$ all of whose objects lie in $\cA_\RR'$.

There is a better description of the exact sequences in the exact category $\cA_\RR'$. Roughly speaking it says that a sequence of morphisms $A\cof B\onto C$ is a short exact sequence in $\cA_\RR'$ if and only if it is split exact on the positive real numbers and on the negative real numbers. To state this more precisely, let $\pi_+,\pi_-:\cA_\RR'\to \cA_\RR$ be the exact functors given by $\pi_+V_{(a,b]}=P_y$ where $y=log\,b$ and $\pi_-V{(a,b]}=P_x$ where $x=-log|a|$ and which take morphisms to the corresponding morphisms. Then the image of each of these functors is the full subcategory $\cP_\RR$ of all projective objects in $\cA_\RR^{sa}$ with no injective summands (i.e., which are zero at large positive real numbers). Then $\cA_\RR'$ is isomorphic to the pull-back in the diagram:
\[
\xymatrix{
\cA_\RR'\ar[r]^{\pi_+}\ar[d]_{\pi_-}& \cP_\RR\ar[d]^\e\\
\cP_\RR\ar[r]^(.4)\e & \kk \- mod
}
\]
where $\e:\cP_\RR\to \kk \- mod$ is the exact functor which sends each $P_x$ to $\kk$. 

Since all exact sequences in $\cP_\RR$ split, we have the following.

\begin{prop}\label{characterization of exact sequences in B}
A sequence of morphisms $X\cof Y\onto Z$ in $\cA_\RR'$ is a short exact sequence if an only if it is split exact in each coordinate, i.e., if and only if $\pi_+(X)\cof\pi_+(Y)\onto \pi_+(Z)$ and $\pi_-(X)\cof\pi_-(Y)\onto \pi_-(Z)$ are split exact sequences in $\cP_\RR$.\qed
\end{prop}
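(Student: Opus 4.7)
The forward direction is immediate: $\pi_+$ and $\pi_-$ are exact functors landing in $\cP_\RR$, so they carry the short exact sequence $X\cof Y\onto Z$ to short exact sequences of projective objects, which necessarily split.

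For the converse, suppose $\pi_+(X\to Y\to Z)$ and $\pi_-(X\to Y\to Z)$ are split exact in $\cP_\RR$. Since the exact structure on $\cA_\RR'$ is inherited from $\cA_\RR^{sa}$, it suffices to verify that $X_t\to Y_t\to Z_t$ is a short exact sequence of $\kk$-vector spaces at every $t\in\RR$. For $t>0$, the assignment $V\mapsto V_t$ is naturally isomorphic to $V\mapsto (\pi_+V)_{\log t}$: both count the summands $V_{(a_i,b_i]}$ with $t\le b_i$, and agreement on morphisms is seen on basic maps and extended additively. Short-exactness of $\pi_+(X\to Y\to Z)$ at $\log t$ therefore transfers directly. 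For $t=0$, $V\mapsto V_0$ agrees with $\e\circ\pi_+$, and applying the exact functor $\e$ to the split short exact $\pi_+$-sequence yields the desired short exact sequence.

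The main obstacle is the case $t<0$. Set $s=-\log|t|$. On an indecomposable $V_{(a,b]}$ one checks directly that $(\pi_-V_{(a,b]})_s=\kk$ iff $a\ge t$, while $(V_{(a,b]})_t=\kk$ iff $a<t$; these two conditions are complementary, and $(V_{(a,b]})_0=\kk$ in both cases, so there is a short exact sequence
\[
0\to(\pi_-V)_s\to V_0\to V_t\to 0
\]
natural in $V\in\cA_\RR'$: the second arrow is the structure map $V_{t0}$, and the first is the identity on those basic summands with $a\ge t$. Naturality is verified case by case on basic morphisms and extended by additivity. Applying this natural sequence to the three-term complex $X\to Y\to Z$ produces a commutative $3\times 3$ diagram of $\kk$-vector spaces with exact rows, whose left column is $\pi_-(X\to Y\to Z)$ evaluated at $s$ (short exact by hypothesis) and whose middle column is $X_0\to Y_0\to Z_0$ (short exact by the $t=0$ case). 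The nine lemma then forces the right column $X_t\to Y_t\to Z_t$ to be short exact, completing the verification.
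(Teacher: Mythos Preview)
Your argument is correct. The forward direction is as you say, and for the converse your pointwise verification works: the identification $V_t\cong(\pi_+V)_{\log t}$ for $t>0$ and the natural short exact sequence
\[
0\to(\pi_-V)_s\to V_0\xrightarrow{V_{t0}} V_t\to 0
\]
for $t<0$ are both easily checked on indecomposables and basic morphisms, and the nine lemma then does the rest. (A small point worth making explicit: the right-hand column $X_t\to Y_t\to Z_t$ in your $3\times 3$ diagram is automatically a complex, since $X_t$ is a quotient of $X_0$ and the composite $X_0\to Z_0$ already vanishes by the middle column; so the nine lemma applies without separately verifying that $X\to Y\to Z$ is a complex.)

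Your route differs from the paper's. The paper does not give a pointwise argument at all; instead, just before the proposition it observes that $\cA_\RR'$ is the pullback $\cP_\RR\times_{\kk\text{-}mod}\cP_\RR$ along the exact functor $\e$, and then simply records that, since every exact sequence in $\cP_\RR$ splits, exactness in the pullback is detected by the two projections. This is a one-line structural argument, but it presupposes that the reader accepts the pullback description and the coordinatewise criterion for exactness in such a fibre product. Your approach is longer but more self-contained: it works directly in the ambient abelian category $\cA_\RR$ and never invokes the pullback. Either approach is fine; yours has the virtue of making the role of the ``other'' half of each interval visible through the auxiliary short exact sequence, while the paper's has the virtue of explaining conceptually why the two projections $\pi_\pm$ suffice.
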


We now define the category $\cB$ which will be isomorphic to $\cA_\RR'$.

\begin{defn} Let $\cB$ be the strictly additive $\kk$-category with one indecomposable object $M(x,y)$ for every ordered pair of real numbers $(x,y)$. Morphism sets are defined by
\[
	\Hom(M(x,y),M(x',y'))=\begin{cases}
	\kk  & \text{if } x\le x'\text{ and }y\le y'\\
	0 & \text{otherwise}
	\end{cases}
\]
The morphism $M(x,y)\to M(x',y')$ corresponding to $1\in\kk$ will be called the \emph{basic morphism}. We define any composition of basic morphisms to be a basic morphism. Thus morphisms between indecomposable objects are scalar multiples of basic morphisms and composition is given by multiplication of these scalars.
\end{defn}

\begin{prop}
There is a $\kk$-linear isomorphism of categories $\cB\to \cA_\RR'$ given by sending $M(x,y)$ to $V_{(a,b]}$ where $a=-e^{-x},b=e^y$ and basic morphisms to basic morphisms. Therefore, $\cB$ is an exact category.\qed
\end{prop}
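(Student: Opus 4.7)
The plan is to define the functor $\Phi \colon \cB \to \cA_\RR'$ by the stated object rule and by sending the basic morphism $M(x,y) \to M(x',y')$ to the basic morphism $V_{(-e^{-x}, e^y]} \to V_{(-e^{-x'}, e^{y'}]}$, extending $\kk$-linearly on each Hom set and strictly additively on direct sums. I would then verify bijectivity on objects, bijectivity on Hom sets, and compatibility with composition. The exactness of $\cB$ is then transported from the exactness of $\cA_\RR'$ established just above.

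For objects, the map $x \mapsto -e^{-x}$ is a strictly increasing bijection $\RR \to (-\infty,0)$ and $y \mapsto e^y$ is a strictly increasing bijection $\RR \to (0,\infty)$, so $(x,y) \mapsto (-e^{-x}, e^y)$ gives a bijection between $\RR^2$ and $\{(a,b) \mid a<0<b\}$. By Proposition \ref{characterization of objects of B}, the latter set indexes precisely the indecomposable objects of $\cA_\RR'$, so strict additivity yields a bijection on all objects.

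For Hom sets, by definition $\Hom_{\cB}(M(x,y), M(x',y')) = \kk$ iff $x \le x'$ and $y \le y'$. Because $x \mapsto -e^{-x}$ and $y \mapsto e^y$ are both strictly increasing, these inequalities translate into $a \le a'$ and $b \le b'$ respectively; combined with the automatic $a' < 0 < b$, this matches the condition $a \le a' < b \le b'$ appearing in Corollary \ref{inherited properties of representations of type An}(2). Thus sending $1 \in \kk$ to $1 \in \kk$ defines a $\kk$-linear bijection on Hom sets. Compatibility with composition is then forced: in $\cB$, compositions of basic morphisms are declared basic, while in $\cA_\RR'$ the same fact is part (3) of Corollary \ref{inherited properties of representations of type An}, so $\Phi$ preserves composition of basic morphisms and hence (by $\kk$-linearity and strict additivity) of all morphisms.

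I do not expect a serious obstacle here; the only point requiring care is the sign built into $a = -e^{-x}$. One might initially worry that this reverses order, but $-e^{-x}$ is an increasing function of $x$, so the Hom-set inequalities align with the correct sign. Once $\Phi$ has been checked to be a $\kk$-linear isomorphism of strictly additive categories, the exact structure on $\cA_\RR'$ (described in Proposition \ref{characterization of exact sequences in B}) pulls back along $\Phi^{-1}$ to give $\cB$ the structure of an exact category, proving the final assertion.
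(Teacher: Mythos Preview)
Your proposal is correct and is exactly the verification the paper leaves implicit: the proposition in the paper carries a bare \qed\ with no argument, and your proof supplies precisely the routine checks (bijectivity on objects via the monotone reparametrizations, matching of Hom conditions using $a'<0<b$, and preservation of basic-morphism composition) that justify it.
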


We note that the compositions $\cB\cong\cA_\RR'\xrarrow{\pi_-}\cP_\RR$ and $\cB\cong\cA_\RR'\xrarrow{\pi_+}\cP_\RR$ send $M(x,y)$ to $P_x$ and $P_y$ respectively. By Proposition \ref{characterization of exact sequences in B}, a sequence in $\cB$ is exact if and only it its images under there two functors are both exact.

From Corollary \ref{inherited properties of representations of type An}, we also have
\[
	\Ext(M(x',y'),M(x,y))\cong \begin{cases}
	\kk  & \text{if } x< x'\text{ and }y< y'\\
	0 & \text{otherwise}
	\end{cases}
\]
with all nonsplit extensions being isomorphic to the exact sequence
\[
\xymatrixrowsep{10pt}\xymatrixcolsep{25pt}
\xymatrix{
M(x,y)\ \ar@{>->}[r]^(.35){\binom t{-t}} &
	M(x',y)\oplus M(x,y')\ar@{->>}[r]^(.6){(1,1)} &
	M(x',y')
	}
\]
for some $t\in \kk ^\times$.

From this description it is clear that $\cB$ is completely \emph{homogeneous} by which we mean that the group of automorphisms of $\cB$ acts transitively on the set of objects. In fact, if $\f_1,\f_2$ are two order preserving bijections (= orientation preserving homeomorphisms) of $\RR$ then we get a strictly additive $\kk$-linear automorphism of $\cB$ given on indecomposable objects by
\[
	M(x,y)\mapsto M(\f_1(x),\f_2(y))
\]
and on morphisms by the property that it takes basic morphisms to basic morphisms. Thus $\Aut(\cB)$ acts transitively on the set of objects of $\cB$. This is an example of the following definition.

\begin{defn}
Suppose that $\f:\RR^2\to\RR^2$ is a function which is nondecreasing in both coordinates in the sense that if $x\le x'$ and $y\le y'$ then $\f_1(x,y)\le\f_1(x',y')$ and $\f_2(x,y)\le \f_2(x',y')$ where $\f(x,y)=(\f_1(x,y),\f_2(x,y))$. Then we define $\Phi_\f$ to be the strictly additive endomorphism of $\cB$ (always assumed to be $\kk$-linear) given on indecomposable objects by $\Phi_\f M(x,y)=M(\f(x,y))$ and given uniquely on morphisms by the condition that it takes basic morphisms to basic morphisms. We call $\Phi_\f$ the \emph{basic endomorphism} induced by the object map $\f$.
\end{defn}

As another example the \emph{involution} $\s$ on $\cB$ is the basic automorphism given on objects by switching $x$ and $y$: $\s M(x,y)=M(y,x)$. This is equivalent to saying that $\s$ reverses the projection functors on the first and second coordinates: $\s\pi_+=\pi_-$ and $\s\pi_-=\pi_+$.

A more general endomorphism of $\cB$ corresponding to the object map $\f$ is given by twisting $\Phi_\f$ by a coefficient map $\ll:\RR^2\to \kk ^\times$. We define the \emph{twisted endomorphism} $\Phi_{(\f,\ll)}$ to be the strictly additive endomorphism of $\cB$ which is given by $\f$ on objects and on morphisms is given by sending the basic morphism $M(x,y)\to M(x',y')$ to $\ll(x',y')/\ll(x,y)$ times the basic morphism $M(\f(x,y))\to M(\f(x',y'))$. Since we only use the ratio of two values of $\ll$, we may change $\ll$ by multiplying all values by the same scalar in $\kk ^\times$ without changing the corresponding $\Phi_{(\f,\ll)}$. Note that $\Phi_{(\f,\ll)}$ is continuous if and only if $\ll$ is constant (so that $\Phi_{(\f,\ll)}=\Phi_\f$ is untwisted).

\begin{prop} All strictly additive automorphism of $\cB$ are given by $\Phi_{(\f,\ll)}$ where $\f$ is given by either $\f(x,y)=(\f_1(x),\f_2(y))$ for all $x,y$ or $\f(x,y)=(\f_2(y),\f_1(x))$ for all $x,y$ with $\f_1,\f_2$ being any order preserving bijection of $\RR$ and $\ll:\RR^2\to \kk ^\times$ any mapping.
\end{prop}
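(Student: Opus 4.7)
The plan is to represent any strictly additive $\kk$-linear automorphism $\Phi$ of $\cB$ as a pair $(\f,\mu)$, where $\f\colon\RR^2\to\RR^2$ is the induced bijection on indecomposables (so $\Phi M(x,y)=M(\f(x,y))$) and, for each $p\le q$ in the product order on $\RR^2$, $\mu(p,q)\in\kk^\times$ is the scalar by which $\Phi$ acts on the basic morphism $M(p)\to M(q)$. Since $\Hom_\cB(M(p),M(q))\cong\kk$ precisely when $p\le q$ and is $0$ otherwise, and $\Phi$ is a $\kk$-linear isomorphism on every hom space, $\f$ is an order isomorphism of $(\RR^2,\le)$; functoriality of $\Phi$ translates into the cocycle identity $\mu(p,r)=\mu(p,q)\mu(q,r)$ for all $p\le q\le r$. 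The proof therefore reduces to (i) classifying order isomorphisms of $(\RR^2,\le)$ and (ii) showing every such cocycle $\mu$ is a coboundary $\ll(q)/\ll(p)$ for some $\ll\colon\RR^2\to\kk^\times$.

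For (i), introduce the order-theoretic relation $R(p,q)$: the interval $[p\wedge q,\,p\vee q]$ is totally ordered. Since this interval is the closed rectangle with opposite corners $p\wedge q$ and $p\vee q$, it is a chain iff one side degenerates, i.e.\ iff $p_1=q_1$ or $p_2=q_2$. A short case analysis shows any three distinct points on which $R$ holds pairwise must share a common coordinate, so the maximal subsets of $\RR^2$ on which $R$ is pairwise are exactly the horizontal lines $L_y=\RR\times\{y\}$ and the vertical lines $K_x=\{x\}\times\RR$. Any order isomorphism $\f$ preserves $\wedge$, $\vee$ and $\le$, hence preserves $R$, and so permutes this family of lines. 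Since two distinct horizontal lines are disjoint while any horizontal and any vertical line meet in exactly one point, $\f$ either sends every horizontal line to a horizontal line (and every vertical to a vertical), or it swaps the two types. In the first case, writing $\f(L_y)=L_{\b(y)}$ and $\f(K_x)=K_{\a(x)}$ and intersecting gives $\f(x,y)=(\a(x),\b(y))$; in the swap case the same argument gives $\f(x,y)=(\b(y),\a(x))$. Restricting $\f$ to a single line and using that $\f^{-1}$ is also order preserving shows $\a,\b$ are order-preserving bijections of $\RR$.

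For (ii), fix a basepoint $p_0=(0,0)$; for each $q\in\RR^2$, pick any $r\ge p_0\vee q$ and set $\ll(q):=\mu(p_0,r)/\mu(q,r)\in\kk^\times$. Independence of the choice of $r$ follows by comparing two choices $r,r'$ via a common upper bound $r''\ge r\vee r'$ and applying the cocycle identity twice. For any $p\le q$, choose a single $r\ge p_0\vee p\vee q$; then
$\ll(q)/\ll(p)=\mu(p,r)/\mu(q,r)=\mu(p,q)$
by the cocycle identity $\mu(p,r)=\mu(p,q)\mu(q,r)$. Combined with (i), this identifies $\Phi$ with $\Phi_{(\f,\ll)}$.

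The main obstacle is step (i): without any continuity assumption on $\f$ or appeal to the topology of $\RR^2$, one must recover the horizontal and vertical lines from the abstract lattice structure of $(\RR^2,\le)$ alone. The relation ``$[p\wedge q,p\vee q]$ is a chain'' is the key order-theoretic observation that makes this tractable; step (ii) is then a routine coboundary argument using that $\RR^2$ has pairwise upper bounds under $\le$.
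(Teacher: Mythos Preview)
Your proof is correct and takes a somewhat different route from the paper in step (i). The paper first normalizes by composing with known automorphisms so that $\Phi M(0,0)=M(0,0)$, and then uses both $\Hom$ and $\Ext$ to recognize the coordinate quadrants and hence the coordinate axes, concluding that $\Phi$ acts as $(\f_1(x),\f_2(y))$ or its swap on indecomposables. Your argument works instead with the partial order on $\RR^2$ detected by $\Hom$ alone: the relation ``$[p\wedge q,\,p\vee q]$ is a chain'' is a clean lattice-theoretic invariant that singles out the coordinate lines without any use of $\Ext$ or any preliminary normalization, and the same argument would classify order automorphisms of any product of two totally ordered sets. The paper's approach is more concrete to this category; yours is more conceptual and portable. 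Step (ii) is essentially the same coboundary argument in both proofs---the paper takes the specific upper bound $(|x|,|y|)$ of $(0,0)$ and $(x,y)$, while you allow an arbitrary upper bound and check independence of the choice, but these are the same idea.
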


\begin{proof}
Suppose that $\Phi$ is a strictly additive automorphism of $\cB$. Then we will show that $\Phi$ is one of the automorphisms that we have described.

Composing with a suitable object map $\f=(\f_1,\f_2)$ we may assume that $\Phi M(0,0)=M(0,0)$. Using $\Hom$ and $\Ext$ we see that $\Phi$ preserves the set of objects $M(x,y)$ for $x,y\ge 0$ and the set of all $M(x,y)$ where $x,y>0$. Therefore, $\Phi$ preserves the union of the positive $x$-axis and the positive $y$-axis. Since the objects $M(x,0),M(0,y)$ for $x,y>0$ do not map nontrivially to each other, we see that $\Phi$ either preserves both of these sets (positive $x$-axis and positive $y$-axis) or switches them. Composing with $\s$ is necessary, we may assume that $\Phi$ preserves the set of all $M(x,0)$ where $x>0$ and the set of all $M(0,y)$ where $y>0$. By a similar argument we may also assume at the same time that $\Phi$ preserves the negative $x$-axis and the negative $y$-axis. Therefore, $\Phi M(x,0)=M(\f_1(x),0)$ and $\Phi M(0,y)=M(0,\f_2(y))$ for order preserving bijections $\f_1,\f_2$ of $\RR$. Then it follows that $\Phi M(x,y)=M(\f_1(x),\f_2(y)$ for all $x,y$ since this is the only indecomposable module which has homomorphisms but no extensions from $\Phi M(x,0)$ and $\Phi M(0,y)$. Thus we may assume that $\Phi$ is the identity map on all objects of $\cB$.

If $\Phi$ is the identity map on objects then $\Phi$ is given by linear automorphisms of \[
	\cB(M(x,y),M(x',y'))=\kk 
\]for all $x\le x'$ and $y\le y'$. These are given by scalar multiplication by some $f((x,y),(x',y'))\in \kk $ which must satisfy the following two conditions in order to form a functor:
\[
	f((x,y),(x',y'))f((x',y'),(x'',y''))=f((x,y),(x'',y''))
\] 
and $f(x,y),(x,y))=1$. Let $\ll:\RR^2\to \kk ^\times$ be given by
\[
	\ll(x,y)=\frac{f((0,0),(|x|,|y|))}{f((x,y),(|x|,|y|))}
\]
Then one can easily show that
\[
	f((x,y),(x',y'))=\ll(x',y')/\ll(x,y)
\]
proving the proposition.
\end{proof}



\subsection{The category $\cB_{\le r}$}

For any real number $r>0$ we define $\cB_{\le r}$ to be the additive full subcategory of $\cB$ generated by all indecomposable objects of the form $M(x,y)$ where $|y-x|\le r$. All such choices are equivalent in the sense that there is a strictly additive automorphism of $\cB$ which sends $\cB_{\le r}$ to $\cB_{\le r'}$ for any $r, r'>0$, namely the functor $\Phi_{\f}$ where $\f(x,y)=(r'x/r,r'y/r)$. Let $\cB_r$ be the full subcategory of $\cB_{\le r}$ generated by all $M(x,y)$ where $|y-x|=r$. 

We will show that $\cB_{\le r}$ is a Frobenius category by showing that it is an exact category in which $\cB_r$ is the full subcategory of projective-injective objects. We also choose a specific two-way $\cB_r$ approximation sequence for each object of $\cB_{\le r}$ in order to specify the triangulated structure of its stable category of $\ul\cB_{\le r}$. 

For any $x,y\in\RR$ with $x-r<y<x+r$ consider the following exact sequence which we call the \emph{positive two-way approximation sequence}. (The \emph{negative two-way approximation sequence} is given by replacing $\binom{1}{-1}$ with $\binom{-1}{1}$.)
\begin{equation}\label{positive two-way approximation}
\xymatrixrowsep{10pt}\xymatrixcolsep{25pt}
\xymatrix{
M(x,y)\ \ar@{>->}[r]^(.3){\binom{1}{-1}} &
	M(y+r,y)\oplus M(x,x+r)\ar@{->>}[r]^(.6){(1,1)} &
	M(y+r,x+r)
	}
\end{equation}
Note that only the terms in the middle lie in $\cB_r$. These points are plotted in Figure \ref{positive two-way approximation}.
\begin{figure}[htbp]
\begin{center}
%
{
\setlength{\unitlength}{1cm}
{\mbox{
\begin{picture}(5,3)
    \thinlines
    \put(0,-.5){
      \qbezier(3.3,.8)(4,1.5)(4.5,2)
    \put(3.5,1){$\bullet$}
      \put(3.8,1){$M(y+r,y)$}
      \put(5,1.7){$\cB_r$}
      \put(-1,1.8){$\cB_r$}
\put(0,-.5){
      \qbezier(0.2,2.2)(1,3)(1.5,3.5)
    \put(3.5,3){$\bullet$}
      }
      }
        \thicklines
        \put(0,-.5){
    \put(1,1){$\bullet$}
      \put(-.4,1){$M(x,y)$}
      \put(0,-.5){
    \put(1,3){$\bullet$}
      \put(-1.1,3){$M(x,x+r)$}
      \put(3.8,3){$M(y+r,x+r)$}}
             \put(2.3,2.6){\qbezier(-1.2,0)(0,0)(1.2,0) 
             \put(-.2,0){\qbezier(.7,0)(.5,0)(.4,0.13) 
             \qbezier(.7,0)(.5,0)(.4,-0.13)             }
             }
             \put(1.1,1.1){\qbezier(0,0)(1,0)(2.4,0)
             \put(1,0){\qbezier(.7,0)(.5,0)(.4,0.13) 
             \qbezier(.7,0)(.5,0)(.4,-0.13)}
             }
             \put(1.1,.6){\qbezier(0,0.5)(0,1)(0,2) 
             \qbezier(0,1.7)(0,1.5)(-.13,1.4) 
             \put(-.6,1){$-1$}
             \qbezier(0,1.7)(0,1.5)(.13,1.4)}
             \put(3.6,.6){\qbezier(0,0.5)(0,1)(0,2) 
             \qbezier(0,1.7)(0,1.5)(-.13,1.4) 
             \qbezier(0,1.7)(0,1.5)(.13,1.4)}
             }
\end{picture}}
}}
\caption{Basic positive two-way approximation sequence in $\cB_{\le r}$ (Equation \ref{positive two-way approximation}).}
\label{fig two-way approximation in B}
\end{center}
\end{figure}
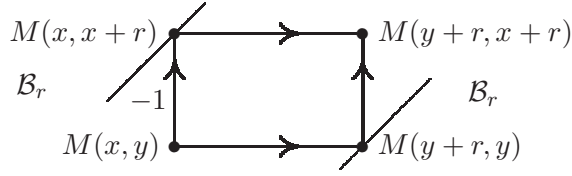

\begin{lem}
The sequence \eqref{positive two-way approximation} is a two-way approximation sequence. I.e., the middle term is a minimal left $\cB_r$ approximation of $M(x,y)$ and a minimal right $\cB_r$ approximation of $M(y+r,x+r)$.
\end{lem}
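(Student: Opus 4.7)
My plan is to prove the two approximation claims separately, exploiting the mirror-symmetry between them. Throughout, the hypothesis $|y-x|<r$ is crucial and will be used in the shape of the two inequalities $y+r>x$ and $x+r>y$.

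First I would note that both summands $M(y+r,y)$ and $M(x,x+r)$ lie in $\cB_r$, since their coordinates differ by exactly $r$. The sequence itself is exact in $\cB$ because it is one of the basic nonsplit extensions described in Corollary~\ref{inherited properties of representations of type An}(4): the required inequalities $x<y+r$ and $y<x+r$ are precisely the standing hypothesis, and the middle term has the predicted form $M(x',y)\oplus M(x,y')$ with $(x',y')=(y+r,x+r)$.

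For the left-approximation property of $f=\binom{1}{-1}$, by strict additivity and $\kk$-linearity it suffices to factor an arbitrary basic morphism $g:M(x,y)\to M(a,b)$ with $|b-a|=r$ through the middle term. If $b=a+r$, then existence of $g$ forces $x\le a$, so the basic morphism $g_2:M(x,x+r)\to M(a,a+r)$ exists and $g=(0,-g_2)\circ f$. If $b=a-r$, then existence of $g$ forces $y+r\le a$, so the basic morphism $g_1:M(y+r,y)\to M(a,a-r)$ exists and $g=(g_1,0)\circ f$. Thus in each case the factorization uses exactly one of the two summands, which is the intuitive reason that neither summand is superfluous.

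For minimality I would compute the endomorphism ring of the middle term. The inequalities $y+r>x$ and $x+r>y$ kill the two cross-Hom spaces $\Hom(M(y+r,y),M(x,x+r))$ and $\Hom(M(x,x+r),M(y+r,y))$ (each forces one of the two defining inequalities of a basic morphism to fail), so $\End(M(y+r,y)\oplus M(x,x+r))=\kk\times\kk$ acting diagonally. An endomorphism $(\alpha,\beta)$ satisfying $(\alpha,\beta)\circ\binom{1}{-1}=\binom{1}{-1}$ must have $\alpha=\beta=1$, hence is the identity and in particular an automorphism, proving minimality. The right-approximation claim for $(1,1):M(y+r,y)\oplus M(x,x+r)\to M(y+r,x+r)$ is handled by the dual case analysis (a basic morphism into $M(y+r,x+r)$ from an object of $\cB_r$ factors through the first or second summand according to the sign of $b-a$), and minimality reduces to the same vanishing of cross-Homs. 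I do not anticipate a real obstacle: the entire content is the two-case split in the approximation step, with minimality being automatic from the endomorphism-ring computation.
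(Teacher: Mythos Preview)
Your argument is correct and complete. The paper states this lemma without proof, so there is nothing to compare against; your case split on the sign of $b-a$ for the factorization, together with the observation that the cross-Hom spaces between the two summands vanish (forcing the endomorphism ring to be $\kk\times\kk$ and hence making minimality immediate), is exactly the natural way to fill in the omitted details.
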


To show that $\cB_{\le r}$ is an exact category, we use the following general fact.

\begin{lem}\label{lemma for subcategories of exact categories to be exact}
Suppose that $\cD$ is an additive full subcategory of an exact category $\cE$ with the property that, for any exact sequence $X\cof Y\onto Z$ in $\cE$ with middle term $Y$ in $\cD$, the other two terms $X,Z$ also lie in $\cD$. Then $\cD$ is an exact category with exact sequences being those sequences which are exact in $\cE$ with all three terms in $\cD$.
\end{lem}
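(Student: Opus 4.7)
The plan is to verify the Quillen axioms of an exact category for $\cD$, with conflations taken to be those short sequences which are conflations in $\cE$ and whose three terms all lie in $\cD$. Two items are immediate from the fact that $\cD$ is a full additive subcategory of $\cE$: each such sequence is still a kernel-cokernel pair in $\cD$ (universal properties restrict along full subcategories), and split conflations $X\cof X\oplus Z\onto Z$ with $X,Z\in\cD$ are visibly present, so identity maps are simultaneously inflations and deflations and the class is closed under isomorphism.

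The substantive axioms -- stability of inflations and deflations under composition and under pushout (resp.\ pullback) along arbitrary morphisms -- all concern the existence of certain objects and whether they lie in $\cD$. The uniform strategy is: carry out the construction in $\cE$, where it is automatic, and then exhibit the newly produced object as the end term of a conflation in $\cE$ whose middle term obviously lies in $\cD$ (typically a direct sum of objects already known to be in $\cD$); the hypothesis then forces the new object into $\cD$. For example, to compose deflations $p:Y\onto Z$ and $q:Z\onto W$ with $Y,Z,W\in\cD$ and kernels in $\cD$, the composite $qp$ is a deflation in $\cE$ with some kernel $K$, and the $\cE$-conflation $K\cof Y\onto W$ has middle term $Y\in\cD$, so $K\in\cD$ and $qp$ is a deflation in $\cD$. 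For the pullback of a deflation $p:Y\onto Z$ along $f:W\to Z$, the pullback $P$ in $\cE$ sits inside the standard conflation $P\cof Y\oplus W\onto Z$ (where the right map is $(p,-f)$), whose middle term is in $\cD$; hence $P\in\cD$, and the induced $P\onto W$ is a deflation in $\cE$ with kernel $\ker p\in\cD$, all three terms in $\cD$. Compositions of inflations and pushouts of inflations are handled dually: for composable inflations $X\cof Y\cof Z$, the cokernel of the composite fits into $X\cof Z\onto \coker(X\to Z)$ with middle term $Z\in\cD$; for the pushout of $i:X\cof Y$ along $g:X\to V$, the pushout $Q$ fits into $X\cof Y\oplus V\onto Q$ with middle term in $\cD$.

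The main obstacle, such as it is, lies in the bookkeeping rather than in any single step: one must verify in each case that the auxiliary conflation one writes down in $\cE$ really has its middle term sitting in $\cD$ by virtue of additivity of $\cD$, and that the morphism data of the new object lives in $\cD$ (which it does because $\cD$ is full). No delicate categorical manoeuvre is needed beyond this packaging of the hypothesis, so the lemma reduces to a straightforward application of the ``$2$-out-of-$3$ on conflations with $\cD$-middle'' principle to the constructions demanded by the exact-category axioms.
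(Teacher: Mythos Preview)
Your proof is correct and follows essentially the same approach as the paper: verify the exact-category axioms by performing each construction in $\cE$ and then invoking the hypothesis on a conflation whose middle term is a direct sum of objects already in $\cD$. The paper uses Keller's short axiom list (E0), (E1), (E2), (E2)$^{op}$ rather than the fuller Quillen list you check, but the key manoeuvre---in particular the use of $X\cof X'\oplus Y\onto Y'$ for the pushout---is identical.
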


\begin{proof} Recall that an exact category is an additive category with a distinguished collection of exact sequences $\xymatrixrowsep{10pt}\xymatrixcolsep{12pt}
\xymatrix{
X\ \ar@{>->}[r]^f &
	Y\ar@{->>}[r]^g &
	Z
	}
$ where $X$ is the kernel of $g$ and $Z$ is the cokernel of $f$ and which satisfying the following list of axioms given by Keller \cite{K}.

(E0) $0\cof 0\onto 0$ is an exact sequence in $\cD$ (since $0\in \cD$.)

(E1) Given exact sequences $\xymatrixrowsep{10pt}\xymatrixcolsep{12pt}
\xymatrix{
X\ \ar@{>->}[r]^f &
	Y\ar@{->>}[r] &
	Z
	}
$ and $\xymatrixrowsep{10pt}\xymatrixcolsep{12pt}
\xymatrix{
Y\ \ar@{>->}[r]^g &
	W\ar@{->>}[r] &
	Z'
	}
$ in $\cD$ there is an exact sequence $\xymatrixrowsep{10pt}\xymatrixcolsep{12pt}
\xymatrix{
X\ \ar@{>->}[r]^{g\circ f} &
	W\ar@{->>}[r] &
	Z''
	}
$. (This is true in $\cE$ and $W\in\cD$.)

(E2) The pushout of an exact sequence $\xymatrixrowsep{10pt}\xymatrixcolsep{12pt}
\xymatrix{
X\ \ar@{>->}[r]^f &
	Y\ar@{->>}[r] &
	Z
	}
$ along any morphism $h:X\to X'$ in $\cD$ exists and gives an exact sequence $X'\cof Y'\onto Z$ in $\cD$. (The pushout exists in $\cE$ and is exact. Since $X\cof X'\oplus Y\onto Y'$ is exact in $\cE$ and $X'\oplus Y\in\cD$, the object $Y'$ lies in $\cD$. So, the pushout sequence lies in $\cD$.) 

Similarly, we have the dual axiom:

(E2)$^{op}$ The pullback of an exact sequence in $\cD$ along any morphism in $\cD$ is an exact sequence in $\cD$.

Therefore, $\cD$ is an exact category.
\end{proof}

We also need the following easy lemma.

\begin{lem}
Suppose that $X\cof Y\onto Z$ is an exact sequence in $\cB$. Then, for any component $M{(x,y)}$ of $Z$, there are, not necessarily distinct, components $M{(x,b)}$ and $M{(a,y)}$ of $Y$ which map nontrivially to $M{(x,y)}$.
\end{lem}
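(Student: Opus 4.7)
The plan is to prove existence of the summand $M(a,y)$ by pointwise surjectivity of $Y \onto Z$ at the right (closed) endpoint of the representation interval for $M(x,y)$, and then to deduce existence of $M(x,b)$ by applying the coordinate-swapping involution $\s$ of $\cB$.

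Working under the isomorphism $\cB \cong \cA_\RR'$, identify $M(c,d)$ with the representation $V_{(-e^{-c},\,e^d]}$. Decompose $Y = \bigoplus_j M(a_j,b_j)$ and let $q : Y \to M(x,y)$ be the composition of $Y \onto Z$ with the projection of $Z$ onto its summand $M(x,y)$. Then $q$ is an epimorphism in the abelian category $\cA_\RR\sa$, hence surjective at every point $u \in \RR$. Evaluate at $u = e^y$: since $V_{(-e^{-x},\,e^y]}(e^y) = \kk$, at least one matrix entry $f_j : M(a_j,b_j) \to M(x,y)$ of $q$ must be nonzero when evaluated at $u = e^y$. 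Non-vanishing of the source at $u = e^y$ forces $e^y \le e^{b_j}$, i.e.\ $y \le b_j$, while non-vanishing of the scalar $f_j$ in $\cB$ forces, via the Hom formula, $a_j \le x$ and $b_j \le y$. Combining these constraints gives $b_j = y$, and $M(a,y) := M(a_j, y)$ is the required summand of $Y$ mapping nontrivially to $M(x,y)$.

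For $M(x,b)$, invoke the involution $\s$ of $\cB$ given on indecomposables by $\s M(p,q) = M(q,p)$. Since $\s$ interchanges $\pi_+$ and $\pi_-$, Proposition~\ref{characterization of exact sequences in B} implies that $\s$ preserves short exact sequences. Thus $\s X \cof \s Y \onto \s Z$ is exact in $\cB$, with $\s M(x,y) = M(y,x)$ a summand of $\s Z$; the case already proved, applied to this sequence, produces a summand $M(a',x)$ of $\s Y$ mapping nontrivially to $M(y,x)$. Undoing $\s$ exhibits $M(x, a')$ as a summand of $Y$ mapping nontrivially to $M(x,y)$, so $b := a'$ works. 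No step is particularly delicate; the only point requiring care is tracking open versus closed endpoints in the translation $M(x,y) \leftrightarrow V_{(-e^{-x},\,e^y]}$, which is what makes the direct ``upper endpoint'' argument work only at the closed end and forces the ``lower endpoint'' case to be handled via the symmetry $\s$.
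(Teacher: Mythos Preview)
Your proof is correct, but it takes a more roundabout path than the paper's. The paper simply invokes Proposition~\ref{characterization of exact sequences in B} directly: since $\pi_+(Y)\onto\pi_+(Z)$ is split exact in $\cP_\RR$, the summand $P_y=\pi_+ M(x,y)$ of $\pi_+(Z)$ must be hit isomorphically by a summand $P_y$ of $\pi_+(Y)$, which is $\pi_+$ of some component $M(a,y)$ of $Y$; the same argument with $\pi_-$ yields $M(x,b)$. This handles both coordinates symmetrically in one line.

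Your argument instead unpacks the $\pi_+$ side concretely by evaluating at the closed endpoint $e^y$ of the underlying representation, and then, because the analogous evaluation at the open endpoint $-e^{-x}$ does not pin down the first coordinate (as you correctly note), you fall back on the involution $\s$ together with Proposition~\ref{characterization of exact sequences in B} to reduce the second coordinate to the first. So you end up using the same proposition anyway, just only for the $\s$-invariance step rather than as the whole proof. Your route is longer but does make explicit why the open/closed asymmetry of the intervals $V_{(a,b]}$ matters---something the paper's slicker argument via $\pi_\pm$ hides.
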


\begin{proof} This follows immediately from Proposition \ref{characterization of exact sequences in B} which said that a sequence in $\cB$ is exact if and only if it is split exact in each coordinate.\end{proof}

\begin{lem}
$\cB_{\le r}$ is an exact category.
\end{lem}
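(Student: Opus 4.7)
The plan is to invoke Lemma \ref{lemma for subcategories of exact categories to be exact} with $\cE = \cB$ and $\cD = \cB_{\le r}$. Since $\cB$ has already been shown to be exact, the task reduces to verifying that whenever $X \cof Y \onto Z$ is an exact sequence in $\cB$ whose middle term $Y$ lies in $\cB_{\le r}$, every indecomposable summand $M(x,y)$ of $X$ and of $Z$ also satisfies $|y-x| \le r$. This is the whole content of the lemma, and it decomposes naturally into a ``$Z$-half'' and an ``$X$-half''.

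For the summands of $Z$ the previous lemma does all the work: for any component $M(x,y)$ of $Z$ it produces components $M(x,b)$ and $M(a,y)$ of $Y$ mapping nontrivially to $M(x,y)$, so that $a \le x$ and $b \le y$, while membership of those two summands in $\cB_{\le r}$ gives $|b - x| \le r$ and $|y - a| \le r$. A quick split on the sign of $y-x$ finishes it: when $y \ge x$, combining $a \le x$ with $y \le a+r$ gives $y \le x + r$; when $y < x$, combining $b \le y$ with $b \ge x - r$ gives $x \le y + r$. For the summands of $X$ I would prove the symmetric statement that for any summand $M(x,y)$ of $X$ there exist summands $M(x,b')$ and $M(a',y)$ of $Y$ to which $M(x,y)$ maps nontrivially. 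This is proved in the same spirit, using Proposition \ref{characterization of exact sequences in B}: the sequence $\pi_+(X) \cof \pi_+(Y) \onto \pi_+(Z)$ is split exact in $\cP_\RR$, so the summand $P_y$ of $\pi_+(X)$ is a split direct summand of $\pi_+(Y)$; Krull--Schmidt in $\cP_\RR$, together with the observation that a retraction $P_{y'} \to P_y$ requires $y' \le y$ while the split mono requires $y' \ge y$, forces some summand $M(a',y)$ of $Y$ into which the restriction of $X \hookrightarrow Y$ to $M(x,y)$ has a nonzero component; since nonzero morphisms $M(x,y) \to M(a',y)$ require $x \le a'$, this gives $a' \ge x$. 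The $\pi_-$ version produces $M(x,b')$ with $b' \ge y$, and the same case analysis then yields $|y - x| \le r$.

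The only mildly delicate point is this dual component lemma for $X$, which is essentially a repeat of the previous lemma's proof applied at the left end of the exact sequence rather than the right. Everything else is elementary interval arithmetic: once both component statements are in hand, the two $\cB_{\le r}$-bounds on the relevant $Y$-summands combine in a two-line case split to force $|y - x| \le r$, and Lemma \ref{lemma for subcategories of exact categories to be exact} then closes the argument.
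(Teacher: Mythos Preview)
Your proposal is correct and follows essentially the same approach as the paper: invoke Lemma \ref{lemma for subcategories of exact categories to be exact}, use the previous component lemma for the $Z$-side, and the dual statement for the $X$-side. The only cosmetic differences are that the paper handles the $Z$-side with a single chain of inequalities $x-r\le b\le y\le a+r\le x+r$ rather than a sign split, and it dismisses the $X$-side with ``a similar argument'' where you spell out the dual component statement explicitly.
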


\begin{proof}
Suppose that $A\cof B\onto C$ is an exact sequence in $\cB$ so that $B$ lies in $\cB_{\le r}$. Let $M(x,y)$ be a component of $C$. Then we will show that $|y-x|\le r$.

By the previous lemma, $B$ has components $M(x,b)$ and $M(a,y)$ which map to $M(x,y)$. But this implies that $b\le y$ and $a\le x$. Since $B$ lies in $\cB_{\le r}$ we must have $|x-b|\le r$ and $|y-a|\le r$. So,
\[
	x-r\le b\le y\le a+r\le x+r
\]
and we conclude that $C$ lies in $\cB_{\le r}$. A similar argument shows that $A$ also lies in $\cB_{\le r}$. Lemma \ref{lemma for subcategories of exact categories to be exact} implies that $\cB_{\le r}$ is an exact category.
\end{proof}

\begin{thm}
$\cB_{\le r}$ is a Frobenius category and $\cB_r$ is the full subcategory of projective-injective objects.
\end{thm}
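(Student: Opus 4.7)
The plan is to verify the three things a Frobenius structure demands: every object of $\cB_r$ is both projective and injective, every object of $\cB_{\le r}$ fits into a conflation with a $\cB_r$-object in the middle (enough projectives and enough injectives), and no object outside $\cB_r$ can be projective-injective. Throughout I use that an exact sequence in $\cB$ with all terms in $\cB_{\le r}$ is automatically an exact sequence in $\cB_{\le r}$ (by Lemma~\ref{lemma for subcategories of exact categories to be exact} and the preceding lemma on components), so $\Ext^1_{\cB_{\le r}}(X,Y)$ is just the subgroup of $\Ext^1_\cB(X,Y)$ consisting of classes whose representing sequence has middle term in $\cB_{\le r}$.

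First I show $\cB_r$ consists of projective-injective objects. By Corollary~\ref{inherited properties of representations of type An}, any nonsplit extension of $M(x',y')$ by $M(x,y)$ has middle term $M(x',y)\oplus M(x,y')$ and requires $x<x'$, $y<y'$. Take $P=M(x_P,y_P)\in\cB_r$. If $y_P=x_P+r$ and $P$ appears as the quotient, the required summand $M(x,y_P)$ has $|y_P-x|=|x_P+r-x|>r$ since $x<x_P$; if $y_P=x_P-r$, the summand $M(x_P,y)$ has $|y-x_P|>r$ since $y<y_P=x_P-r$. This rules out $\Ext^1_{\cB_{\le r}}(P,-)$, so $P$ is projective. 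The argument when $P$ appears as the kernel is symmetric and gives injectivity.

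Next I produce enough projectives and injectives. For $M(x,y)\in\cB_{\le r}\setminus\cB_r$, so $|y-x|<r$, the positive two-way approximation sequence~\eqref{positive two-way approximation}
\[
M(x,y)\cof M(y+r,y)\oplus M(x,x+r)\onto M(y+r,x+r)
\]
is a short exact sequence in $\cB$ (it is one of the basic extensions) whose middle term lies in $\cB_r$; hence it is a conflation of $\cB_{\le r}$ realizing $M(x,y)$ as a subobject of a projective-injective. To obtain the dual conflation, I apply the same construction to $M(y-r,x-r)$, which also lies in $\cB_{\le r}$ since $|(y-r)-(x-r)|=|y-x|<r$; its cokernel is precisely $M(x,y)$, giving a deflation from a $\cB_r$-object onto $M(x,y)$. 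Objects already in $\cB_r$ are their own conflations via $0\cof M(x,y)\xrightarrow{1}M(x,y)\onto 0$.

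Finally, I show that a projective-injective object $M(x,y)$ must satisfy $|y-x|=r$. Assume $|y-x|<r$ and set $a=b=(x+y-r)/2$. Then $a<x$ and $b<y$ (both equivalent to $|y-x|<r$), while $a\ge y-r$ and $b\ge x-r$ (both equivalent to $y\le x+r$ and $x\le y+r$); thus the three terms of the basic extension $M(a,b)\cof M(x,b)\oplus M(a,y)\onto M(x,y)$ all lie in $\cB_{\le r}$, producing a nonzero class in $\Ext^1_{\cB_{\le r}}(M(x,y),M(a,b))$. Hence $M(x,y)$ is not projective, and dually not injective. Combined with the previous steps, this identifies the full subcategory of projective-injective objects of $\cB_{\le r}$ with $\cB_r$ and establishes the Frobenius property. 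The main nuisance is the bookkeeping of the two sign cases $y-x=\pm r$ in step one; everything else reduces to reading inequalities off the explicit formula for the basic extension.
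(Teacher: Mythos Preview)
Your proof is correct and uses the same core ingredients as the paper: the explicit description of extensions in $\cB$ and the two-way approximation sequence~\eqref{positive two-way approximation}. The only real difference is in the converse direction (showing that objects with $|y-x|<r$ are not projective): the paper observes that the approximation sequence $A\cof B\onto P$ with $B\in\cB_r$ must split when $P$ is projective, forcing $P$ to be a summand of $B\in\cB_r$, whereas you produce an explicit nonsplit extension with kernel $M\left(\tfrac{x+y-r}{2},\tfrac{x+y-r}{2}\right)$. Both arguments are short; the paper's is slightly more conceptual while yours is more self-contained since it does not rely on $\cB_r$ being closed under direct summands.
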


\begin{proof}
It remains only to show that the objects of $\cB_r$ are the projective and injective objects of $\cB_{\le r}$. So, suppose that $A\cof B\onto C$ is an exact sequence in $\cB_{\le r}$ and $C\in \cB_r$. Let $M(x,x+r)$ be a component of $C$. As in the proof of the lemma above, $B$ has a component $M(a,x+r)$ which maps to $M(x,x+r)$ and this is possible only if $a=x$. The case of $M(x,x-r)$ being similar, we see that $B\onto C$ is a split epimorphism. Therefore objects in $\cB_r$ are projective. Conversely, any projective object $P$ is part of a two-way approximation sequence $A\cof B\onto P$ where $B$ lies in $\cB_r$. Since $P$ is projective, the sequence splits making $P$ an object of $\cB_r$. So $\cB_r$ is the full subcategory of projective objects in $\cB_{\le r}$. Similarly, $\cB_r$ is the full subcategory of injective objects in $\cB_{\le r}$. The existence of two-sided approximations means that $\cB_{\le r}$ has enough projectives. Since the projective objects are also injective, $\cB_{\le r}$ is Frobenius. 
\end{proof}




\subsection{The category $\cD_r$} For any $r>0$ we define $\cD_r$ to be the stable category of the Frobenius category $\cF_{\le r}$. Thus $\cD_r$ has the same objects as $\cB_{\le r}$ but $\cD_r(X,Y)=\cB_{\le r}(X,Y)/\sim$ where we quotient out the morphisms $X\to Y$ which factor through an object in $\cB_c$. The equivalence class of a morphism $f:X\to Y$ is denoted $\ul f$.

\begin{prop}
$\cD_r:=\ul{\cB}_{\le r}$ is a triangulated category for any $r>0$. The automorphism $\Sig =[1]$ of $\cD_r$ is induced by the basic automorphism of $\cB_{\le r}$ given by $\Sig M(x,y)=M(y+r,x+r)$.\qed
\end{prop}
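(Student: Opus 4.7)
The plan is to invoke Happel's theorem \cite{HappelBook}, which asserts that the stable category $\underline{\cE}$ of any Frobenius category $\cE$ inherits a canonical triangulated structure, with shift $\Sigma=[1]$ defined by choosing, for each object $X$, a short exact sequence $X \cof I(X) \onto \Sigma X$ in which $I(X)$ is projective--injective. Since the preceding theorem identifies $\cB_{\le r}$ as Frobenius with $\cB_r$ its full subcategory of projective--injectives, the triangulated structure on $\cD_r = \underline{\cB}_{\le r}$ is immediate. To extract the explicit formula for $\Sigma$, I would take as $I(M(x,y))$ the middle term of the positive two-way approximation sequence \eqref{positive two-way approximation}; by the lemma on two-way approximations, $M(y+r,y)\oplus M(x,x+r)$ lies in $\cB_r$, so this is a legitimate choice of injective resolution, and reading off the cokernel yields $\Sigma M(x,y) = M(y+r, x+r)$ on objects. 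In the boundary case $|y-x| = r$ the object $M(x,y)$ is itself in $\cB_r$, hence zero in $\cD_r$, and the formula remains consistent because $M(y+r,x+r)$ is also in $\cB_r$.

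Next, I would check that the object prescription $\phi(x,y) = (y+r,\,x+r)$ underlies a basic automorphism $\Phi_\phi$ of $\cB_{\le r}$ in the sense of the earlier definition. Both coordinates $\phi_1(x,y)=y+r$ and $\phi_2(x,y)=x+r$ are jointly nondecreasing in $(x,y)$, so $\Phi_\phi$ is a well-defined basic endomorphism of $\cB$; its pointwise inverse $(x,y)\mapsto (y-r,x-r)$ is likewise nondecreasing, so $\Phi_\phi$ is an automorphism of $\cB$. The identity $|(y+r)-(x+r)| = |y-x|$ shows that $\Phi_\phi$ preserves the band $|y-x|\le r$, so it restricts to an automorphism of $\cB_{\le r}$, which in turn descends to an automorphism of $\cD_r$.

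The remaining and main step is to identify this induced automorphism with Happel's $\Sigma$ on \emph{morphisms}, not merely on objects; this is the step I expect to be the main obstacle, because Happel's construction on morphisms requires making a coherent choice of lift through the injective resolutions. Given a basic morphism $f\colon M(x,y)\to M(x',y')$ with $x\le x'$ and $y\le y'$, I plan to lift $f$ to a morphism of two-way approximation sequences whose middle arrow is the direct sum of the basic maps $M(y+r,y)\to M(y'+r,y')$ and $M(x,x+r)\to M(x',x'+r)$; each exists and is basic thanks to the inequalities $y\le y'$, $y+r\le y'+r$, $x\le x'$, $x+r\le x'+r$. A direct diagram chase then identifies the induced map on cokernels with the basic morphism $M(y+r,x+r)\to M(y'+r,x'+r)$, which is precisely $\Phi_\phi(f)$. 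Since every morphism in $\cB_{\le r}$ is a finite sum of $\kk$-scalar multiples of basic morphisms (between indecomposable summands), strict additivity and $\kk$-linearity propagate the identification to all morphisms, completing the proof that $\Sigma = \Phi_\phi$.
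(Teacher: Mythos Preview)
Your proposal is correct and follows exactly the approach the paper intends: the proposition is marked \qed\ with no argument given, because it is meant to follow immediately from Happel's theorem applied to the Frobenius category $\cB_{\le r}$ established in the preceding theorem, together with the explicit positive two-way approximation sequence~\eqref{positive two-way approximation}, whose cokernel is visibly $M(y+r,x+r)$. Your additional verification that the basic automorphism $\Phi_\phi$ agrees with Happel's shift on morphisms (via the evident lift to a map of approximation sequences) is more detail than the paper supplies, but it is the correct and expected justification.
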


We now use Happel's procedure \cite{HappelBook} to construct all distinguished triangles in the triangulated category $\cD_r$. We begin with the explicit choice of two-way $\cB_r$-approximation sequence for any object in $\cD_r$ given on indecomposable objects by (\ref {positive two-way approximation}). Let $\cB_{<r}\subset \cB_{\le r}$ be the strictly additive full subcategory of $\cB$ generated by all $M(x,y)$ where $|y-x|<r$. For any indecomposable object $M(x,y)$ in $\cB_{<r}$ let $I_1M(x,y)=M(y+r,y),I_2M(x,y)=M(x,x+r)$ and $\Sig M(x,y)=M(y+r,x+r)$. Extend $I_1,I_2,\Sig $ additively to all objects of $\cB_{<r}$. Then, for any object $X$ in $\cB_{<r}$, the two-way $\cB_r$-approximation sequence for $X$ in $\cB_{\le r}$ is:
\[
\xymatrixrowsep{10pt}\xymatrixcolsep{25pt}
\xymatrix{
X\ \ar@{>->}[r]^(.3){\binom{1}{-1}} &
	I_1X\oplus I_2X\ar@{->>}[r]^(.6){(1,1)} &
	\Sig X
	}
\]

Every object $X$ in $\cD_r$ lifts uniquely to an object in $\cB_{<r}$ which we also call $X$.
Then any morphism $\ul f:X\to Y$ in $\cD_r$ comes from a morphism $f:X\to Y$ in $\cB_{\le r}$ where $X,Y$ lie in $\cB_{<r}$. The standard diagram for $f$ using the positive two-way approximation sequence for each component of $X$ is the pushout diagram
\[\xymatrix{
        X\ \ar@{>->}[r]^(.35){\binom1{-1}}\ar[d]^f &I_1X\oplus I_2X\ar@{->>}[r]^(.6){(1,1)}\ar[d]^{
        \left(
       u_1,u_2
        \right)
        } &\Sig X\ar[d]^=\\
        Y\ \ar@{>->}[r]^{g}&
        Z\ar@{->>}[r]^{h}&\Sig X
        }
\]
By Happel's construction \cite{HappelBook}, the resulting distinguished triangle in $\cD_r$ is
\[
	X\xrarrow{\ul f} Y\xrarrow{\ul g}Z\xrarrow{\ul h}\Sig X
\]
Recall that all distinguished triangles in $\cD$ are isomorphic to triangles of this form. 





As a basic example, let $X=M(x,y)$ and $Y=M(x,z)$ where $x-r<y<z<x+r$. Then we get the diagram
\[\xymatrix{
        M{(x,y)}\ \ar@{>->}[r]^(.3){\binom1{-1}}\ar[d]^1 &M{(y+r,y)}\oplus M{(x,x+r)}\ar@{->>}[r]^(.6){(1,1)}\ar[d]^{
        \binom{1\ 0}{0\ 1}
        } &M{(y+r,x+r)}\ar[d]^=\\
        M{(x,z)}\ \ar@{>->}[r]^(.3){\binom1{-1}}&
        M{(y+r,z)}\oplus M{(x,x+r)}\ar@{->>}[r]^(.6){(1,1)}&\Sig M{(x,y)}
        }
\]
Since $M(x,x+r)\in\cB_c$ is zero in $\cD_r$, this gives the distinguished triangle
\begin{equation}\label{eq: basic up-down-up triangle}
        M{(x,y)}\xrarrow{1}M{(x,z)}\xrarrow{1}M{(y+r,z)}\xrarrow{1}\Sig M{(x,y)}
\end{equation}
where the 1's represents the fact that each morphism in this triangle is a basic morphism. These points are plotted in Figure \ref{fig updown}. We call this the \emph{basic positive triangle}.
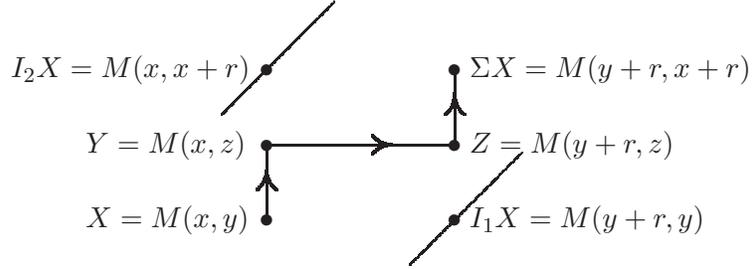
\begin{figure}[htbp]
\begin{center}
%
{
\setlength{\unitlength}{1cm}
{\mbox{
\begin{picture}(5,3.5)
    \thinlines
    \put(0,-.5){
      \qbezier(3,.5)(3.5,1)(4.5,2)
      \qbezier(0.5,2.5)(1,3)(2,4)
    \put(3.5,1){$\bullet$}
      \put(3.8,1){$I_1X=M(y+r,y)$}
    \put(3.5,2){$\bullet$}
    \put(3.5,3){$\bullet$}
      \put(3.8,2){$Z=M(y+r,z)$}
      }
        \thicklines
        \put(0,-.5){
    \put(1,1){$\bullet$}
      \put(-1.3,1){$X=M(x,y)$}
    \put(1,2){$\bullet$}
    \put(1,3){$\bullet$}
      \put(-1.3,2){$Y=M(x,z)$}
      \put(-2.3,3){$I_2X=M(x,x+r)$}
      \put(3.8,3){$\Sig X=M(y+r,x+r)$}
             \put(1.1,1.1){\qbezier(0,0)(0,.5)(0,1)
             \qbezier(0,.7)(0,.5)(-.13,.4)
             \qbezier(0,.7)(0,.5)(.13,.4)}
             \put(1.1,2.1){\qbezier(0,0)(1,0)(2.5,0)
             \qbezier(1.7,0)(1.5,0)(1.4,0.13)
             \qbezier(1.7,0)(1.5,0)(1.4,-0.13)
             }
             \put(3.6,2.1){\qbezier(0,0)(0,.5)(0,1)
             \qbezier(0,.7)(0,.5)(-.13,.4)
             \qbezier(0,.7)(0,.5)(.13,.4)}
             }
\end{picture}}
}}
\caption{Basic positive triangle (Equation \ref{eq: basic up-down-up triangle})}
\label{fig updown}
\end{center}
\end{figure}

If $X=M(x,y)$ and $Y=M(w,y)$ where $y-r<x<w<y+r$ we get another distinguished triangle
\begin{equation}\label{eq: basic down-up-down triangle}
        M{(x,y)}\xrarrow{1}M{(w,y)}\xrarrow{-1}M{(w,x+r)}\xrarrow{1}\Sig M{(x,y)}
\end{equation}
which we call the \emph{basic negative triangle}. Here, the second morphism is $-1$ times a basic morphism. These points are plotted in Figure \ref{fig downup}

\begin{figure}[htbp]
\begin{center}
%
{
\setlength{\unitlength}{1cm}
{\mbox{
\begin{picture}(5,3)
    \thinlines
    \put(0,-.5){
      \qbezier(3.3,.8)(4,1.5)(4.5,2)
    \put(3.5,1){$\bullet$}
      \put(3.8,1){$I_1X=M(y+r,y)$}
\put(0,-.5){
      \qbezier(0.2,2.2)(1,3)(1.3,3.3)
    \put(3.5,3){$\bullet$}
      \put(1.2,3.5){$Z'=M(w,x+r)$}}
      }
        \thicklines
        \put(0,-.5){
    \put(1,1){$\bullet$}
      \put(-1.3,1){$X=M(x,y)$}
    \put(2.2,1){$\bullet$}
      \put(1.5,.5){$Y'=M(w,y)$}
      \put(0,-.5){
    \put(2.2,3){$\bullet$}
    \put(1,3){$\bullet$}
      \put(-2.3,3){$I_2X=M(x,x+r)$}
      \put(3.8,3){$\Sig X=M(y+r,x+r)$}}
             \put(2.3,2.6){\qbezier(0,0)(.5,0)(1.2,0) 
             \put(.25,0){\qbezier(.7,0)(.5,0)(.4,0.13) 
             \qbezier(.7,0)(.5,0)(.4,-0.13)
             }}
                          \put(1.1,1.1){\qbezier(0,0)(.5,0)(1.2,0)
             \put(.25,0){\qbezier(.7,0)(.5,0)(.4,0.13) 
             \qbezier(.7,0)(.5,0)(.4,-0.13)}
             }
             \put(2.3,.6){\qbezier(0,0.5)(0,1)(0,2)
             \put(-0.6,1){$-1$}
             \qbezier(0,1.7)(0,1.5)(-.13,1.4) 
             \qbezier(0,1.7)(0,1.5)(.13,1.4)}
             }
\end{picture}}
}}
\caption{Basic negative triangle (Equation \ref{eq: basic down-up-down triangle})}
\label{fig downup}
\end{center}
\end{figure}
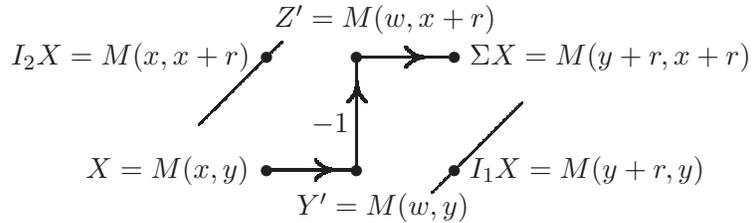

A third example is given by $X=M(x,y)$ and $Y=M(z,y)$ where $x<w<y+r$ and $y<z<x+r$. Then we get the distinguished triangle
\begin{equation}\label{eq: third distinguished triangle}
        M{(x,y)}\xrarrow{1}M{(z,y)}\xrarrow{\binom{-1}1}M(w,x+r)\oplus M(y+r,z)\xrarrow{(1,1)} \Sig M{(x,y)}
\end{equation}

Thus a basic positive triangle is distinguished when the morphism are basic morphisms and a basic negative triangle is distinguished when two of the morphisms are basic and the third is negative a basic morphism. Note that when we apply the functor $\Sig $ to a basic positive triangle it has the shape of a basic negative triangle but has positive signs and is thus \emph{not} a distinguished triangle unless $\kk$ has characteristic 2. This comes from the general fact that if a distinguished triangle remains distinguished when the sign of one or three of the morphisms is changed, then either the ground field has characteristic 2 or the triangle splits (is a direct sum of triangle with one term equal to zero).

Similarly, $\Sig $ changes a basic negative triangle into a triangle with the shape of a basic positive triangle, again with the wrong sign. We will describe more general triangles later using the hyperbolic plane. 

%
%



\section{Continuous orbit category} 

The \emph{continuous orbit category} $\cC_{r,s}$ is defined to be the  orbit category of the (doubled) continuous derived category $\cD_r\double$ under a triangulated automorphism $\ul F_s$. However, it is easier to see the triangulated structure of this category if we first take the (completed) orbit category of the ``doubled'' category $\cB_{\le r}\double$ under the exact automorphism $F_s$, show that the result is again a Frobenius category, then stabilize.

We will see later that $\cC_{r,s}$ has a cluster structure only for certain values of $\frac{r}{s}$, namely, $\frac{r}{s}=\frac{n+1}{n+3}$ and $\frac{r}{s}=1$. Thus the name ``continuous cluster category'' is only appropriate for those values of $r,s$. Otherwise, $\cC_{r,s}$ will be called a \emph{continuous orbit category.}

When we take the orbit category of the doubled Frobenius category $\cB_{\le r}\double$, we need to complete the category since, otherwise, the endomorphism ring of an indecomposable object will be the polynomial ring $\kk [u]$ which is not local. In the completed orbit category, the endomorphism ring of any indecomposable object will be the local ring $S=\kk [[u]]$ and $\Hom(X,Y)$ will be a free $S$-module of rank 1 for any two indecomposable objects $X,Y$. We will show that the completed orbit category $\cB_{\le r}/F_s^\wedge$ is equivalent to the continuous Frobenius category $\cF_{r\pi/s}$ constructed in \cite{IT09}. In the notation of \cite{IT09}, $u=\sqrt t$.


\subsection{Construction of completed orbit category}

For any $s\ge r$, we want to take an orbit category of $\cB_{\le r}$ with respect to a functor $F_s$ which takes $M(x,y)$ to $M(y+s,x+s)$. However, the obvious functor of this form is not strictly triangulated since it takes positive triangles to negative triangles. To remedy this we will ``double'' the category. For any additive category $\cE$, let $\cE\double$ be the category whose objects are ordered pairs $(X_0,X_1)$ of objects in $\cE$ and whose Hom sets are $\cE\double((X_0,X_1),(Y_0,Y_1))= \cE(X_0\oplus X_1,Y_0\oplus Y_1)$. Then $\cE\double$ is equivalent to the full subcategory of objects of the form $(X,0)$ since $(X_0,X_1)\cong (X_0\oplus X_1,0)$. Therefore, $\cE\double$ is always equivalent to $\cE$.

In the case at hand, $\cB_{\le r}\double$ is the strictly additive $\kk$-category with exactly two objects in every isomorphism class of indecomposable objects: $M(x,y)_+$ and $M(x,y)_-$. The advantage of this is that we can take different two-way approximation sequences for these two objects:
\begin{equation}\label{two-way approximations with parity}
\xymatrixrowsep{10pt}\xymatrixcolsep{25pt}
\xymatrix{
M(x,y)_\e\ \ \ar@{>->}[r]^(.3){\binom{\e}{-\e}} &
	M(y+r,y)_\e\oplus M(x,x+r)_\e\ar@{->>}[r]^(.58){(1,1)} &
	M(y+r,x+r)_\e
	}
\end{equation}
For $\e=+$, this agrees with \eqref{positive two-way approximation} and, in the stable category of $\cB_{\le r}\double$, this gives positive distinguished triangles for $M(x,y)_+$. For $\e=-$, the sign is reversed and we get negative distinguished triangles in the stable category starting with $M(x,y)_-$. 

Let $F_s$ be the strictly additive $\kk$-automorphism of $\cB_{\le r}$ given on indecomposable objects by
\[
	F_s \left(M(x,y)_\e\right)=M(y+s,x+s)_{-\e}
\]
and on morphism by taking basic morphisms to basic morphisms. Then $F_s$ takes the chosen two-way approximation sequence for $M(x,y)_\e$ to the two-way approximation sequence for $M(y+s,x+s)_{-\e}$ and therefore gives a strictly triangulated automorphism of the stable category. This stable category is isomorphic to the double of the continuous derived category $\cD_r$, so we denote it by $\cD_r\double$ and we define the triangulated structure of this doubled category to match that of the stable category of $\cB_{\le r}\double$ so that, e.g., one triangle starting with $M(x,y)_+$ and corresponding to \eqref{eq: third distinguished triangle} is
\[
       M{(x,y)_+}\xrarrow{1}M{(z,y)}\xrarrow{\binom{-1}1}M(w,x+r)\oplus M(y+r,z)\xrarrow{(1,1)} M{(y+r,x+r)_+}
\]
where the three terms in the middle can have any parity.

\begin{defn}
We define the \emph{completed orbit category} $\cB_{\le r}\double/F_s^\wedge$ to be the category with the same object set as $\cB_{\le r}\double$ but with Hom sets given by
\[
	\cB_{\le r}\double/F_s^\wedge(X,Y)=\prod_{n\in \ZZ}\cB_{\le r}\double(X,F_s^nY)
\]
with composition given by $(g_j)(f_k)=(h_n)$ where 
\[
	h_n=\sum_{j+k=n} F_s^kg_j\circ f_k
\]
This is a finite sum for each $n$ since $(\exists N\in \ZZ)(\forall j,k\le N)$ $g_j=0$ and $f_k=0$.
\end{defn}

For each $M(x,y)_\e$ in $\cB_{\le r}\double$, consider the basic morphism 
\[
	u: M(x,y)_\e\to F_s M(x,y)_\e=M(y+s,x+s)_{-\e}
\]
This is a natural transformation since any composition of basic morphisms is a basic morphism by definition. Given $M(x,y)_\e$ and $M(a,b)_{\e'}$ in $\cB_{\le r}\double$, let $m\in \ZZ$ be minimal so that there is a nonzero morphism
\[
	f_0:M(x,y)_\e\to F_s^m M(a,b)_{\e'}
\]
which we take to be basic. For example, if $m$ is even, then $x\le a+2s m, y\le b+2s m$ but either $x+s> b+2s m$ or $y+s>a+2s m$ or both. Then any morphism $f:M(x,y)_\e\to M(a,b)_{\e'}$ in the completed orbit category is given by a unique infinite sum of the form
\[
	f=\sum_{k\ge 0}a_k u^k f_0
\]
where $a_k\in \kk $ for all $k\ge0$. Composition of morphisms multiplies these coefficients as if they were elements of the ring $S=\kk [[u]]$. Therefore, we have:

\begin{prop} The completed orbit category $\cB_{\le r}\double/F_s^\wedge$ is an additive $S$-category. Given any two indecomposable objects $X,Y$ of $\cB_{\le r}\double/F_s^\wedge$, the Hom set $\cB_{\le r}\double/F_s^\wedge(X,Y)$ is a free $S$-module generated by one basic morphism, call it $f_{XY}$, and composition with a basic morphism $f_{YZ}:Y\to Z$ gives $f_{YZ}f_{XY}=u^n f_{XZ}$ for some $n\ge0$. In particular, the endomorphism ring of any indecomposable object is a local ring.\qed
\end{prop}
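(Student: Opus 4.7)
The plan is to analyze the Hom-sets $\cB_{\le r}\double(X, F_s^n Y)$ for indecomposables $X$ and $Y$ as $n$ varies over $\ZZ$, and then to assemble the proposition. The central claim to establish is that there is an integer $m = m_{XY}$ such that this Hom-set is one-dimensional over $\kk$ (spanned by a basic morphism) for $n \ge m$ and zero for $n < m$; once this is in hand, the ``power series in $u$'' description informally discussed in the paragraph before the proposition is rigorously justified and the remaining claims follow from the composition formula.

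To establish the ray structure, iterate $F_s M(a,b)_{\e'} = M(b+s, a+s)_{-\e'}$ to get an explicit formula for $F_s^n Y$ where $Y = M(a,b)_{\e'}$; writing $F_s^n Y = M(a_n, b_n)_{\e_n}$, the hom set $\cB_{\le r}\double(X, F_s^n Y)$ for $X = M(x,y)_\e$ is $\kk$ (spanned by the basic morphism) if $x \le a_n$ and $y \le b_n$, and zero otherwise. The key monotonicity step is: if such a map exists at level $n$, then one exists at level $n+1$. This follows from $a_{n+1} = b_n + s \ge y + s \ge x$ (using $|x-y| \le r \le s$) and symmetrically $b_{n+1} = a_n + s \ge y$. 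Combined with $a_n, b_n \to +\infty$ as $n \to +\infty$ and $a_n, b_n \to -\infty$ as $n \to -\infty$, the minimal such $m_{XY}$ exists.

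With the ray structure in hand, verify that $u : \mathrm{id} \to F_s$ is indeed a natural transformation, a routine check using the closure of basic morphisms under composition (an axiom of $\cB$) and the fact that $F_s$ takes basic morphisms to basic morphisms. This makes $u$ a central element of $\cB_{\le r}\double/F_s^\wedge$, endowing every Hom-set with an $S = \kk[[u]]$-action. Let $f_{XY}$ denote the basic morphism $X \to F_s^{m_{XY}} Y$ regarded as an element at level $m_{XY}$. The composition formula in the completed orbit category shows that $u^k \cdot f_{XY}$ is the basic morphism at level $m_{XY}+k$, so the infinite product $\prod_{n \ge m_{XY}} \cB_{\le r}\double(X, F_s^n Y) \cong \prod_{k \ge 0} \kk$ is identified with $S \cdot f_{XY}$ via $(a_0,a_1,\ldots) \mapsto \bigl(\sum_k a_k u^k\bigr) f_{XY}$, giving a free $S$-module of rank one. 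For the composition law, the orbit-category formula concentrates $f_{YZ} \circ f_{XY}$ at level $m_{XY}+m_{YZ}$ with value $F_s^{m_{XY}}(f_{YZ}) \circ f_{XY}$, which is a composition of basic morphisms in $\cB_{\le r}\double$ and hence itself basic; since it is nonzero, one has $m_{XZ} \le m_{XY} + m_{YZ}$, so $f_{YZ} f_{XY} = u^n f_{XZ}$ with $n = m_{XY}+m_{YZ}-m_{XZ} \ge 0$. Taking $Y=Z=X$ and noting that $\mathrm{id}_X$ is the basic morphism at level zero gives $m_{XX}=0$ and $\End(X) = S \cdot \mathrm{id}_X \cong \kk[[u]]$, a local ring.

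The principal obstacle is the monotonicity argument in the second paragraph: one must distinguish the two parities of $n$ and combine the swap-and-shift structure of $F_s$ with the constraints $|x-y| \le r$ and $r \le s$ to get the ``one-sided support'' statement. The rest is a mechanical unwinding of the composition formula together with the already-stated basic-morphism axioms of $\cB$.
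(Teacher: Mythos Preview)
Your proposal is correct and follows exactly the approach the paper has in mind. The paper states this proposition with a bare \qed, taking the preceding paragraph (the existence of the minimal $m$, the description $f=\sum_{k\ge0}a_ku^kf_0$, and the remark that composition multiplies coefficients as in $\kk[[u]]$) as sufficient; your write-up is a faithful and careful unpacking of that paragraph, with the monotonicity step $a_{n+1}=b_n+s\ge y+s\ge x$ (using $|x-y|\le r\le s$) being the only point that required any thought and which you handle correctly.
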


We leave it as an exercise for the reader to show that $n=0,1$ or $2$. Also $n=n(XYZ)$ is a cocycle in the sense that $n(XYZ)-n(XYW)+n(XZW)-n(YZW)=0$ for all indecomposable $X,Y,Z,W$. This structure will be explored in another paper \cite{IT12}.

Although $\cB_{\le r}\double/F_s^\wedge$ is an $S$-category we prefer to consider it as a category over $R=\kk [[t]]$ where $t=u^2$. As an $R$-module, each Hom set (between indecomposable objects $X,Y$) is freely generated by two morphisms $f_{XY}$ and $uf_{XY}$.


\subsection{Review of continuous Frobenius categories}\label{ss: review of Frobenius}

For any discrete valuation ring $R$ with unique maximal ideal $(t)$ and any positive real number $c\le\pi$ another Frobenius category $\cF_c$ was constructed in \cite{IT09}. We will review this constuction and, in the case when $R=\kk [[t]]$ and $c=\frac{r\pi}s$, we will show that $\cF_c$ is equivalent to the completed orbit category $\cB_{\le r}\double/F_s^\wedge$. We conclude that $\cB_{\le r}\double/F_s^\wedge$ is also a Frobenius category.

Let $\cP_{S^1}$ be the strictly additive $R$-category whose indecomposable objects are projective representations $P_{[x]}$ of the circle $S^1=\RR/2\pi\ZZ$ generated at one point $[x]=x+2\pi\ZZ$. For every nonnegative real number $\a$ we have a morphism $e^\a: P_{[x]}\to P_{[x+\a]}$ with composition given by $e^\a e^\b=e^{\a+\b}:P_{[x]}\to P_{[x+\a+\b]}$. The morphism sets $\cP_{S^1}(P_{[x]},P_{[y]})$ are free $R$-modules of rank 1 with generator equal to $e^\a$ where $\a$ is the smallest nonnegative real number so that $[x+\a]=[y]$. We call this the \emph{basic morphism} from $P_{[x]}$ to $P_{[y]}$. All morphisms of the form $e^{2\pi}$ are multiplication by the uniformizer $t\in R$ by definition. See \cite{IT09} for details. (See also the construction of the \emph{big loop} in \cite{vR}.)

Let $\cF$ be the category of all pairs $(V,d)$ where $V$ is an object of $\cP_{S^1}$ and $d$ is an endomorphism of $V$ so that $d^2=\cdot t$ (multiplication by $t$). Morphisms $(V,d)\to (W,d)$ are defined to be morphisms $f:V\to W$ in $\cP_{S^1}$ so that $df=fd$. An example of an object of $\cF$ is given, for any $x\le y\le x+2\pi \in\RR$, by
\[
	E(x,y):=\left(P_{[x]}\oplus P_{[y]},\mat{0 & e^{x+2\pi-y}\\e^{y-x} &0}\right)
\]
Note that $E(x,y)=E(x+2\pi,y+2\pi)$ and $E(x,y)\cong E(y,x+2\pi)$. Morphisms between these objects are given by $2\times 2$ matrices which can always be written as the sum of a diagonal morphism and a counter diagonal morphism. For example $d$ is always a counterdiagonal endomorphism of any object and $d^2=\cdot t$ is a diagonal endomorphism.

The following theorem in proved in \cite{IT09} and generalized in \cite{IT12} using the results of \cite{IT09}.

\begin{thm}\cite{IT09} Let $\cF$ be the category of all pairs $(V,d)$ where $V$ is an object of $\cP_{S^1}$ and $d$ is an endomorphism of $V$ so that $d^2=\cdot t$ with morphisms those commuting with $d$. Then $\cF$ is a Frobenius category with exact sequences defined to be sequences 
\[
	\xymatrixrowsep{10pt}\xymatrixcolsep{12pt}
\xymatrix{
(A,d)\ \ar@{>->}[r]^f &
	(B,d)\ar@{->>}[r]^g &
	(C,d)
	}
\]
so that $\xymatrixrowsep{10pt}\xymatrixcolsep{12pt}
\xymatrix{
A\ \ar@{>->}[r]^f &
	B\ar@{->>}[r]^g &
	C
	}
$ is a split exact sequence in $\cP_{S^1}$. The projective injective objects are those of the form $(P\oplus P,d_P)$ where $P$ is any object of $\cP_{S^1}$ and $d_P=\mat{0 &t\\1 &0}$. Furthermore, $\cF$ is a Krull-Schmidt category with indecomposable objects isomorphic to $E(x,y)$ for $x\le y\le x+2\pi$.
\end{thm}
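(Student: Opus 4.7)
My plan is to address the three assertions---exactness, Krull--Schmidt classification, and identification of the projective-injectives---with a normal-form analysis of the square-root-of-$t$ datum $d$ doing the bulk of the work. The strategy exploits the fact that the forgetful functor $\cF\to\cP_{S^1}$ is $R$-linear and faithful, so structural questions in $\cF$ can be reduced to (easy) questions in $\cP_{S^1}$ together with a compatibility condition on $d$.

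First, I would verify the Keller axioms (as listed in the proof of Lemma~\ref{lemma for subcategories of exact categories to be exact}) for the declared exact structure. Every declared conflation $(A,d)\rightarrowtail(B,d)\twoheadrightarrow(C,d)$ splits after applying the forgetful functor, so pushouts and pullbacks along morphisms of $\cF$ can be computed in $\cP_{S^1}$, where after splitting they reduce to direct sums; the commuting-square condition $df=fd$ then forces a unique lift of $d$ to the pushout/pullback. Axioms (E0)--(E2)$^{\mathrm{op}}$ consequently reduce to trivial facts about split sequences in $\cP_{S^1}$.

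Second, I would prove Krull--Schmidt by a normal form argument for $d$. Writing $V=\bigoplus_i P_{[x_i]}$, each matrix entry $d_{ij}$ is a scalar multiple $\lambda_{ij}e^{\alpha_{ij}}$ of the basic morphism $P_{[x_j]}\to P_{[x_i]}$ with $0\le\alpha_{ij}<2\pi$; the equation $d^2=\cdot t$ (with $t=e^{2\pi}$) forces strong constraints, in particular $\alpha_{ij}+\alpha_{ji}=2\pi$ whenever both entries contribute to a given diagonal term of $d^2$. Starting from an extremal generator $P_{[x]}$ and applying $d$, one extracts a two-dimensional $d$-invariant summand, isomorphic either to $E(x,y)$ (when $d^2$ returns to $P_{[x]}$ via some $P_{[y]}$ with $y-x<2\pi$) or to the loop object $(P_{[x]}\oplus P_{[x]},d_{P_{[x]}})$ (when the return is a full circle). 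Induction on the rank of $V$ completes the decomposition. A direct calculation shows that the endomorphism ring of every building block is isomorphic to $\kk[[u]]$ with $u^2=t$, which is local, so Azumaya's theorem yields Krull--Schmidt uniqueness.

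Third, for the Frobenius property, I would show each loop object $(P\oplus P,d_P)$ is projective: given a morphism $f\colon(P\oplus P,d_P)\to(C,d)$ and a conflation $(A,d)\rightarrowtail(B,d)\twoheadrightarrow(C,d)$, a lift $h$ of $f$ exists in $\cP_{S^1}$ by the splitting, the defect $dh-hd_P$ lands in $(A,d)$, and since $d_P$ is the companion matrix of $X^2-t$ acting freely on $P\oplus P$ this defect is a coboundary in the appropriate complex, allowing $h$ to be corrected to commute with $d$. Injectivity is dual. Conversely, any indecomposable $E(x,y)$ with $y-x<2\pi$ sits as a term of an obvious non-split conflation built from loop objects, showing simultaneously that $E(x,y)$ is not projective and that $\cF$ has enough projective-injectives. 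The main obstacle will be the normal-form step: reducing a general $d$ with $d^2=\cdot t$ to the prescribed block form requires a careful $d$-equivariant change of basis while keeping precise track of the scalars $\lambda_{ij}$ and angles $\alpha_{ij}$, and this bookkeeping is the technical heart of the theorem.
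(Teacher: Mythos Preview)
The paper does not prove this theorem: it is stated with the citation \cite{IT09} and introduced by ``The following theorem is proved in \cite{IT09}'', so there is no proof in the present paper to compare against. Your proposal is therefore not a paraphrase of anything here but an independent attempt at the result from the companion paper.

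As such an attempt, the outline is reasonable, but two places deserve more care. For projectivity of $(P\oplus P,d_P)$ there is a cleaner argument than your defect/coboundary sketch: the relation $d_P=\begin{pmatrix}0&t\\1&0\end{pmatrix}$ gives a natural bijection $\Hom_{\cF}\bigl((P\oplus P,d_P),(C,d)\bigr)\cong\Hom_{\cP_{S^1}}(P,C)$ sending $(f_1,f_2)\mapsto f_1$ (with inverse $f_1\mapsto(f_1,df_1)$), after which projectivity is immediate from the splitting in $\cP_{S^1}$; your coboundary language obscures this and would need justification. For the normal-form step, ``starting from an extremal generator and applying $d$'' does not by itself produce a $d$-invariant \emph{direct summand}: you must also exhibit a $d$-stable complement, and in the presence of several generators at the same point of $S^1$ the off-block entries of $d$ need not vanish automatically. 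You should make explicit the change-of-basis (over the local ring $R$) that kills those entries; this is exactly the bookkeeping you flag as the ``technical heart'', and it is where a careless argument can go wrong.
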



\begin{cor}\cite{IT09} Given a positive real number $c\le\pi$, let $\th=\pi-c$ and let $\cF_c$ denote the full subcategory of $\cF$ generated by the indecomposable objects $E(x,y)$ where $x+\th\le y\le x+2\pi-\th$. Then $\cF_c$ is a Frobenius category with exact sequences defined to be those in $\cF$ with all objects in $\cF_c$ and with projective-injective objects given by $E(x,y)$ where $|y-x-\pi|=c$.
\end{cor}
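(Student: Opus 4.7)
The plan is to mirror the argument for $\cB_{\le r}\subset\cB$ in Section~1.4: apply the closure-under-components lemma (the analog of Lemma~\ref{lemma for subcategories of exact categories to be exact}) to $\cF_c\subset\cF$, then identify the projective-injective objects and exhibit enough of them by writing down explicit two-way approximation sequences.

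First I would verify the component-closure property: if $A\cof B\onto C$ is exact in $\cF$ with $B\in\cF_c$, then every indecomposable summand of $A$ and of $C$ has width $y-x\in[\pi-c,\pi+c]$. The key input is that, by the theorem of \cite{IT09} quoted above, any exact sequence in $\cF$ splits after forgetting $d$, and $\cF$ is Krull--Schmidt; so each summand $E(x,y)$ of $C$ receives a basic morphism from some summand $E(a,b)$ of $B$. Commutation with the differential $d$ forces the endpoints $[x],[y]$ to be recoverable as $[a+\alpha],[b+\beta]$ with $\alpha,\beta\ge 0$, and a short calculation shows that the width of $E(x,y)$ is then sandwiched between widths of summands of $B$, hence still in $[\pi-c,\pi+c]$. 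The dual argument handles $A$. The subcategory lemma then makes $\cF_c$ an exact category with the stated exact sequences.

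Next, for any $E(x,y)$ in the interior $\pi-c<y-x<\pi+c$ I would write down the explicit two-way approximation sequence
\[
E(x,y)\ \cof\ E(x,x+\pi+c)\oplus E(y-\pi+c,y)\ \onto\ E(y-\pi+c,x+\pi+c),
\]
the direct analog of \eqref{two-way approximations with parity}, with maps given by the evident $d$-commuting basic $\cP_{S^1}$-morphisms and sign pattern $\binom{1}{-1}$, $(1,1)$. A direct computation verifies that each component commutes with $d$, that after forgetting $d$ the sequence is split exact, that the two middle summands are projective-injective (widths $\pi+c$ and $\pi-c$ respectively), and that the outer terms lie in $\cF_c$. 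This gives $\cF_c$ enough left and right approximations in the proposed projective-injective subcategory $\cF_c^{proj}=\{E(x,y):|y-x-\pi|=c\}$. Conversely, any $E(x,y)$ with $|y-x-\pi|=c$ is projective-injective in $\cF_c$: the component analysis of the first step, applied to an exact sequence in $\cF_c$ ending at or starting from such an $E(x,y)$, forces the relevant summand of the middle term to equal $E(x,y)$ itself, so the sequence splits.

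The main obstacle is the circle-geometry bookkeeping in the first step: representatives $E(x,y)$ are only well-defined modulo the flip $(x,y)\mapsto(y,x+2\pi)$, so one must fix the canonical representative $x\le y\le x+2\pi$ and keep careful track of which endpoint carries the nontrivial basic $\cP_{S^1}$-morphism. Once that is done, the argument becomes the direct circle analog of the width-bound calculation used for $\cB_{\le r}$.
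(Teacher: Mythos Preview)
The paper does not prove this corollary; it is quoted from \cite{IT09}. Your plan---apply Lemma~\ref{lemma for subcategories of exact categories to be exact} after verifying component closure, then exhibit explicit two-way approximation sequences to identify the projective-injectives---is precisely the template the paper uses for the parallel result on $\cB_{\le r}$ in Section~1.4, and your approximation sequence
\[
E(x,y)\ \cof\ E(x,x+\pi+c)\oplus E(y-\pi+c,y)\ \onto\ E(y-\pi+c,x+\pi+c)
\]
is correct (the middle summands have widths $\pi+c$ and $\pi-c$, and a direct check shows the maps commute with $d$ and split after forgetting $d$).

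Your width-sandwiching claim also goes through, though it deserves one more sentence than you give it. Since $A\to B\to C$ is split exact in $\cP_{S^1}$, for each summand $E(x,y)$ of $C$ there is a summand of $B$ whose $(1,1)$-entry (after using the flip $E(a,b)\cong E(b,a+2\pi)$ if needed) maps $P_{[x]}$ isomorphically to $P_{[x]}$; the $d$-commutation relation $e^{y-x}p=se^{b-a}$ with $p$ a unit then forces $b\le y$ in the chosen lift, so that summand has width $\le y-x$. The symmetric argument on the $P_{[y]}$ side produces a summand of $B$ with width $\ge y-x$. Since both lie in $\cF_c$, the width $y-x$ is trapped in $[\theta,2\pi-\theta]$. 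This is exactly the circle analogue of the two-coordinate argument for $\cB_{\le r}$, with the flip isomorphism replacing the choice between $\pi_+$ and $\pi_-$.
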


The stable category of $\cF_c$ is defined to be $\cC_c$. The \emph{continuous cluster category} is defined to be $\cC_\pi$. To compare the categories $\cB_{\le r}\double/F_s^\wedge$ and $ \cF_{r\pi/s}$ we use the following notion. The \emph{continuous degree} of any nonzero morphism $f:M(x,y)\to M(a,b)$ is defined to be $cdeg\,f:=a+b-x-y$. This has the property that $cdeg\,(fg)=cdeg\,f+cdeg\,g$ if $f,g$ are nonzero morphisms between indecomposable objects of $\cB_{\le r}$. Any nonzero morphism $f:P_{[x]}\to P_{[y]}$ in $\cP_{S^1}$ has the form $f=re^{\a}$ where $x+\a-y\in 2\pi\ZZ$ and $r$ is a unit in $R=\kk [[t]]$. The \emph{continuous degree} of $f$ is then defined to be $cdeg\,f=\a$. The \emph{continuous degree} of any nonzero diagonal or counterdiagonal morphism $E(x,y)\to E(a,b)$ is defined to be the sum of the continuous degrees of the diagonal or counterdiagonal entries. For example, $cdeg\,d=2\pi$.


\subsection{The equivalence $\hat G$} There is an equivalence of categories \[
\hat G:\cB_{\le r}\double/F_s^\wedge\xrarrow\cong \cF_{r\pi/s}
\]
induced by $G:\cB_{\le r}\double\to\cF_{r\pi/s}$, the strictly additive $\kk$-linear functor defined on indecomposable objects and basic morphisms as follows. 

On indecomposable objects $M(x,y)_\e$ of $\cB_{\le r}\double$, $G$ is defined by:
\[
	GM(x,y)_\e=\begin{cases} E(\frac\pi sx,\frac\pi sy+\pi) & \text{if } \e=+\\
   E(\frac\pi sy-\pi,\frac\pi sx) & \text{if } \e=-
    \end{cases}
\]
For $M(x,y)_\e$ in $\cB_{\le r}\double$ we have $x-r\le y\le x+r$. So, $x+\th\le y+\pi\le x+2\pi-\th$ where $\th=s-r$ and $E(x,y+\pi)\cong E(y-\pi,x)$ is an object of $\cF_{r\pi/s}$.

For every basic morphism $f:M(x,y)_\e\to M(a,b)_{\e'}$ in $\cB\double$ with $cdeg\,f:=a+b-x-y$, we define $Gf:GM(x,y)_\e\to GM(a,b)_{\e'}$ to be the unique morphism of the form $Gf=d^ng_0=g_0d^n$ where $g_0:GM(x,y)_\e\to GM(a,b)_{\e'}$ is a basic morphism and $n\in\ZZ_{\ge0}$ so that $d^ng_0$ has continuous degree equal to $\frac\pi s\,cdeg\, f$. A formula for $Gf$ is given by:
\[
	Gf=\mat{0&1\\1&0}^{(\e'=-)}\mat{e^{a-\frac\pi sx} & 0\\ 0 & e^{b-\frac\pi sy}}\mat{0&1\\1&0}^{(\e=-)}
\]
where $(\e=-)$ denotes the truth value of that statement (1 if true and 0 if false). In particular, $Gf$ is a diagonal morphism if $\e=\e'$ and a counterdiagonal morphism if $\e=-\e'$.

The formula clearly shows that $G$ is a continuous functor and it follows from the topology of covering spaces that $G$ is uniquely determined, as a continuous strictly additive $\kk$-linear functor, by its value on indecomposable objects (just as in Proposition \ref{uniqueness of continuous functors}).

\begin{thm} The functor $G:\cB_{\le r}\double\to\cF_{r\pi/s}$ induces an equivalence of categories:
\[
\hat G:\cB_{\le r}\double/F_s^\wedge\xrarrow\cong \cF_{r\pi/s}
\]
\end{thm}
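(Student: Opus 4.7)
The plan is to show that $\hat G$ is well-defined on the completed orbit category, fully faithful, and essentially surjective. Essential surjectivity is the easiest step: every indecomposable of $\cF_{r\pi/s}$ has the form $E(a,b)$ with $\th \le b-a \le 2\pi-\th$ where $\th = \pi - r\pi/s$. Setting $x = \frac{s}{\pi}a$ and $y = \frac{s}{\pi}(b-\pi)$ gives $|y-x| = \frac{s}{\pi}|b-a-\pi| \le \frac{s}{\pi}(\pi-\th) = r$, so $M(x,y)_+ \in \cB_{\le r}\double$ and $GM(x,y)_+ = E(a,b)$ by definition of $G$.

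Well-definedness of $\hat G$ rests on the identity $G \circ F_s = G$ of functors $\cB_{\le r}\double \to \cF_{r\pi/s}$. On objects, direct substitution together with the relation $E(a+2\pi,b+2\pi) = E(a,b)$ yields $GF_s M(x,y)_\e = GM(x,y)_\e$ for both parities. On morphisms, $G$ and $G F_s$ are continuous strictly additive $\kk$-linear functors agreeing on objects, so the covering-space reasoning of Proposition \ref{uniqueness of continuous functors} (applied to $\cB_{\le r}\double$, which has the analogous topological structure) forces them to agree on all morphisms. Applying the formula for $Gf$ to the basic morphism $u_X : X \to F_s X$ produces the counterdiagonal endomorphism $d_{GX}$. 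Consequently, for any $(f_n) \in \prod_n \cB_{\le r}\double(X, F_s^n Y)$ the continuous degrees of $G(f_n)$ grow linearly with $|n|$, so $\sum_n G(f_n)$ converges in the $t$-adic topology on $\cF_{r\pi/s}(GX,GY)$ and I define $\hat G((f_n)) := \sum_n G(f_n)$. The convolution formula for composition in the completed orbit category, combined with $G F_s = G$, immediately gives $\hat G((g_j)(f_k)) = \hat G((g_j)) \circ \hat G((f_k))$, so $\hat G$ is a functor.

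For full faithfulness, fix indecomposable objects $X$ and $Y$ in $\cB_{\le r}\double$. By the proposition immediately preceding the theorem, $\cB_{\le r}\double/F_s^\wedge(X,Y)$ is a free $S$-module of rank one on the basic morphism $f_0 : X \to F_s^m Y$ of minimal $m$; using $t = u^2$, this is a free $R$-module of rank two on $\{f_0, u f_0\}$. On the target side, $\cF_{r\pi/s}(GX,GY)$ splits as the direct sum of its diagonal and counterdiagonal parts, each an $R$-free module of rank one generated by the basic morphism of minimal continuous degree; hence $\cF_{r\pi/s}(GX,GY)$ is $R$-free of rank two. Since $G$ sends parity-preserving basic morphisms to diagonal ones and parity-reversing basic morphisms to counterdiagonal ones, scaling continuous degree by $\pi/s$, and since $G(u_X) = d_{GX}$, the elements $\hat G(f_0)$ and $\hat G(u f_0) = d_{GY} \circ \hat G(f_0)$ are precisely the two basic generators of $\cF_{r\pi/s}(GX,GY)$, with the parity of $m$ determining which lands in which summand. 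Thus $\hat G$ carries an $R$-basis to an $R$-basis, so it is an $R$-linear isomorphism on every Hom set.

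The main obstacle is the bookkeeping in the fully-faithful step: one must verify that the minimal $m$ on the orbit-category side corresponds, on each of the diagonal and counterdiagonal strata of $\cF_{r\pi/s}(GX,GY)$, to the minimal continuous degree, with the parities $\e,\e'$ tracked correctly through the doubling. This reduces to comparing the position inequalities $x \le a',\ y \le b'$ that define basic morphisms $X \to F_s^n Y$ in $\cB_{\le r}\double$ with the characterization of basic morphisms $P_{[x]} \to P_{[y]}$ in $\cP_{S^1}$ as those of smallest nonnegative continuous degree. Once this alignment is checked by a direct case analysis in $\e$, $\e'$, and the parity of $m$, the three steps combine to show that $\hat G$ is an equivalence.
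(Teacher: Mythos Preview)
Your proposal is correct and follows essentially the same route as the paper: both verify $G\circ F_s=G$ on objects by direct substitution, both observe that $G(u)=d$ (so $Gu^2=t$), and both establish full faithfulness by identifying each Hom set as a free $R$-module of rank two and checking that $\hat G$ carries the basis $\{f_0,uf_0\}$ to $\{g_0,dg_0\}$. The paper packages the definition of $\hat G$ slightly differently---writing $f=\sum a_i u^{i-n}f_0$ and setting $\hat Gf=vg_0+wdg_0$ with $v=\sum a_{n+2k}t^k$, $w=\sum a_{n+2k+1}t^k$---and then checks functoriality by an explicit computation using the cocycle identity $g_1g_0=d^\ell g_2$ matched with $f_1f_0=u^\ell f_2$; your convolution argument is the same content in compressed form. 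The ``bookkeeping'' you flag at the end (matching minimal $m$ to minimal continuous degree through the parity cases) is exactly what the paper's explicit formula absorbs, so you are not missing any idea---only the final line of arithmetic that the paper writes out.
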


\begin{proof} The proof uses the following diagram. 
\[
\xymatrix{
\cB_{\le r}\double\ar[d]\ar[r] &
	\cB_{\le r}\double/F_s^\wedge\ar[d]\ar[r]^(.6)\cong_(.6){\hat G} & \cF_{r\pi/s}\ar[d]
	\\
\cD_r\double \ar[r]& 
	\cD_r\double/\ul F_s\ar[r]^(.6)\cong& \cC_{r,s}
	}
\]
First, we will show that this diagram commutes. Commutativity of the left hand square is a tautology. All four categories have the same set of objects by definition and the four functors are the identity maps on these objects again by definition. The functors send basic morphisms to basic morphism. So, the left side of the diagram commutes.

For commutativity of the right hand square note first that $G=G\circ F_s$ by the following calculation.
\[
	GF_s M(x,y)_\e=GM(y+s,x+s)_{-\e}=\begin{cases} E(\frac \pi sy+\pi,\frac \pi sx+2\pi)=E(\frac \pi sy-\pi,\frac \pi sx) & \text{if } \e=-\\
   E(\frac \pi sx,\frac \pi sy+\pi) & \text{if } \e=+
    \end{cases}
\]
This equals $GM(x,y)_\e$. So, $G$ takes the same value on all objects in any $F_s$ orbit.

Given two indecomposable objects $M(x,y)_\e,M(a,b)_{\e'}$, let $n\in\ZZ$ be minimal so that there exist a nonzero morphism $f_0:M(x,y)_\e\to F_s^n M(a,b)_{\e'}$. Then a morphism \[
	f\in \cB_{\ge c}\double/F_s^\wedge(M(x,y)_\e,M(a,b)_{\e'})=\prod_{i\in\ZZ}\cB_{\ge c}\double(M(x,y)_\e,F_s^iM(a,b)_{\e'})
\]
is given by $f=\sum_{i=n}^\infty a_iu^{i-n}f_0$ where $u$ denotes the basic morphism $X\to F_s X$ for all objects $X$ and $a_i\in \kk $ for all $i\ge n$. Let $g_0:GM(x,y)_\e\to GM(a,b)_{\e'}=GF_s^nM(a,b)_{\e'}$ be the basic morphism. Then we define $\hat Gf$ to be the morphism
\[
	\hat Gf= vg_0+wdg_0 \in\cF_{r\pi/s}(GM(x,y)_\e, GM(a,b)_{\e'})
\]
where $v,w\in R=\kk [[t]]$ are given by $v=\sum_{k\ge0} a_{n+2k}t^k$ and $w=\sum_{k\ge0} a_{n+2k+1}t^k$. Since $Gu^2=t$, this gives an $R$-linear isomorphism
\[
	\hat G:\cB_{\ge c}\double/F_s^\wedge(M(x,y)_\e,M(a,b)_{\e'})\cong \cF_{r\pi/s}(GM(x,y)_\e, GM(a,b)_{\e'})
\]

To check that $\hat G$ is a functor, take another indecomposable $M(p,q)_{\e''}$ and let \[
f'=\sum b_ju^{j-m} f_1\in \cB_{\ge c}\double/F_s^\wedge(M(a,b)_{\e'}, M(p,q)_{\e''})
\]
where $f_1:M(a,b)_{\e'}\to F^m M(p,q)_{\e''}$ is the basic morphism with minimal $m\in\ZZ$. Then $\hat Gf'=r'g_1+s'dg_1$ where $r'=\sum b_{m+2k}t^k$, $s'=\sum b_{m+2k+1}t^k$ and $g_1:GM(a,b)_{\e'}\to GM(p,q)_{\e''}$ is the basic morphism. Then $g_1g_0=d^\el g_2$ where $g_2$ is basic (and $\el=0,1$ or $2$). This corresponds to $f_1f_0=u^\el f_2$ where $f_2:M(x,y)_\e\to F_s^{n+m-\el}M(p,q)_{\e''}$ is basic. Calculation shows:
\[
	f'  f=(r' +s'u) f_1(r+su)f_0=(r' +s'u)(r+su)u^\el f_2
\]
\[
	Gf' Gf=(r' +s'd) g_1(r+sd)g_0=(r' +s'd)(r+sd)d^\el g_2=G(f'f)
\]

Since the objects in an $F_s$ orbit in $\cB_{\ge c}\double/F_s^\wedge$ are canonically isomorphic, this induces an equivalence of categories $\hat G:\cB_{\le r}\double/F_s^\wedge\xrarrow\cong \cF_{r\pi/s}$ as claimed.
\end{proof}

\begin{cor}
The completed orbit category $\cB_{\ge c}\double/F_s^\wedge$ is a Frobenius category whose stable category is equivalent to the orbit category $\cC_{r,s}=\cD_r\double/\ul F_s$. So, $\cC_{r,s}\cong \cC_{r\pi/s}$.\qed
\end{cor}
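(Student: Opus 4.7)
The strategy is to transport everything along the equivalence $\hat G:\cB_{\le r}\double/F_s^\wedge\xrightarrow{\cong}\cF_{r\pi/s}$ that was just established, and then observe that taking the stable category commutes with taking the (completed) orbit category by $F_s$.

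First, since $\hat G$ is an equivalence of $R$-linear additive categories and $\cF_{r\pi/s}$ is a Frobenius category (by the corollary to the IT09 theorem recalled in Section \ref{ss: review of Frobenius}), we transport the exact structure and the class of projective-injective objects across $\hat G$. Concretely, a sequence in $\cB_{\le r}\double/F_s^\wedge$ is declared exact iff its image under $\hat G$ is exact in $\cF_{r\pi/s}$; this makes $\hat G$ an exact equivalence, so $\cB_{\le r}\double/F_s^\wedge$ inherits the Frobenius property, with projective-injective objects being those $M(x,y)_\e$ with $|y-x|=r$ (the preimages under $\hat G$ of the $E(a,b)$ with $|b-a-\pi|=r\pi/s$).

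Second, I would check that this inherited exact structure on $\cB_{\le r}\double/F_s^\wedge$ is the one naturally coming from $\cB_{\le r}\double$ via the orbit construction, namely that $X\cof Y\onto Z$ is exact in $\cB_{\le r}\double/F_s^\wedge$ iff the $0$-th component in $\prod_n \cB_{\le r}\double(-,F_s^n-)$ represents an exact sequence in $\cB_{\le r}\double$. This is immediate from the explicit object-map of $G$ (which preserves the chosen two-way approximation sequences \eqref{two-way approximations with parity}, with the parity switch $\e\mapsto-\e$ exactly matching the sign reversal in the $\binom{\e}{-\e}$ column) together with the fact that $F_s$ preserves the projective-injective subcategory (since $|x-y|=r$ iff $|(y+s)-(x+s)|=r$).

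Third, to identify the stable category, note that the commutative square on the right of the diagram in the proof of the previous theorem is a square of $\kk$-linear functors in which the top row is an exact equivalence and the vertical arrows are the stabilization functors. Because $F_s$ preserves projective-injectives, the quotient $\cB_{\le r}\double(X,Y)/(\text{factor through }\cB_r\double)$ and the passage $\cB\mapsto \cB/F_s^\wedge$ commute; hence the stable category of $\cB_{\le r}\double/F_s^\wedge$ is canonically $\cD_r\double/\ul F_s=\cC_{r,s}$, while stabilizing $\cF_{r\pi/s}$ gives $\cC_{r\pi/s}$ by definition. The induced equivalence on the bottom row yields $\cC_{r,s}\cong\cC_{r\pi/s}$.

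The only point that is not purely formal is the compatibility in the second step, i.e., that the exact structure transported through $\hat G$ really agrees with the one one would write down directly on the completed orbit category; once the parity conventions in $G$ and the two-way approximations \eqref{two-way approximations with parity} are checked to match, everything else is bookkeeping.
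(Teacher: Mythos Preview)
Your proposal is correct and follows essentially the same approach as the paper, which treats this corollary as immediate (marked \qed) from the equivalence $\hat G$ and the commutative diagram established in the proof of the preceding theorem. You have simply made explicit the steps the paper leaves implicit: transporting the Frobenius structure along $\hat G$, and reading off the identification of stable categories from the bottom row of the diagram (where the right vertical arrow is stabilization $\cF_{r\pi/s}\to\cC_{r\pi/s}$).
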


\begin{rem}
In particular, when $r=s=\pi$, the orbit category $\cC_{r,s}\cong \cC_\pi$ is triangulated and we will show in Theorem \ref{C has cluster structure} that it has a cluster structure.
\end{rem}

\begin{rem}
$\cB_{\ge c}\double/F_s^\wedge$ is a topological $R$-category with space of indecomposable objects homeomorphic to the connected and oriented 2-fold covering space of the closed Moebius band. At the end of \cite{IT09} another topological construction is given which forms the disconnected and unoriented 2-fold covering space of the closed Moebius band. Thus there are two inequivalent topological triangulated structures on $\cC_{r,s}\cong \cC_{r\pi/s}$.
\end{rem}

\subsection{Embedding the cluster category of type $A_n$}

Suppose that $\frac rs=\frac{n+1}{n+3}$. Then we will construct an embedding of the cluster category of type $A_n$ into the continuous orbit category $\cC_{r,s}$. But first, we need the general description of the continuous orbit category $\cC_{r,s}=\cD_r/\ul F_s$ for any $0<r\le s$. For this we use the continuous degree of any morphism and $w=s-r$.

The indecomposable objects of $\cC_{r,s}$ are $E(x,y)$ where $x,y$ are real numbers so that $x+w<y<x+2s-w$ modulo the identification $E(x,y)=E(x+2ms,y+2ms)$ for any $m\in \ZZ$ and $E(x,y)\cong E(y,x+2s)$. Hom sets are given by $\cC_{r,s}(E(a,b),E(x,y))=\kk $ if $x,y$ can be chosen so that either
\begin{enumerate}
\item $a\le x <b-w$ and $b\le y<a+2s-w$ or
\item $a\le y <b-w$ and $b\le x+2s<a+2s-w$
\end{enumerate}
and the Hom set is zero otherwise. In other words, there is a nonzero homomorphism $E(a,b)\to X$ with $X$ indecomposable if and only if $X\cong E(x,y)$ where $(x,y)$ lies in the half-open rectangle
\[
	R_{(a,b)}=[a,b-w)\times [b,a+2s-w)
\]
This is because the upper left and lower right corners of this rectangle are the projective-injective objects $E(a,a+2s-w)$ and $E(b-w,b)$ respectively. The \emph{continuous degree} of any nonzero morphism $f:E(a,b)\to E(x,y)$ is given by $cdeg\,f=x+y-a-b$ in the first case and $cdeg\,f=2s +x+y-a-b$ in the second case. In particular, all nonzero morphism have continuous degree $<2c=2s-2w$. 

Composition of nonzero morphisms $f:X\to Y, g:Y\to Z$ in $\cC_{r,s}$ for $X,Y,Z$ indecomposable is given by multiplication of scalars provided that $\cC_{r,s}(X,Z)=\kk $ and $cdeg\,f+cdeg\,g<2c$. This is necessary since $cdeg(g\circ f)=cdeg\,f+ cdeg\,g$. The condition is also sufficient since it implies that $Y$ and $Z$ lie inside the rectangle $R_X$ in such a way that $Z$ is northeast of $Y$ and therefore the morphisms $f,g$ are entirely inside $R_X$. 

Figure \ref{Fig: composition in C} shows the example: $X=E(0,7s/6)$, $Y=E(2s/3,5s/3)$, $Z=E(4s/3,2s)\cong Z'=E(0,4s/3)$ with $w<s/3$. There are nonzero morphisms $f:X\to Y,g:Y\to Z,h:X\to Z$ with $cdeg\,f =7s/6$, $cdeg\,g=s$ and $cdeg\,h=s/6$. The composition $g\circ f$ is not equal to a multiple of $h$ since the unique point $(0,4s/3)$ in the half-open rectangle $R_X$ isomorphic to $Z$ is to the southwest of $Y$ and this is equivalent to the fact that $cdeg\,f+cdeg\,g\neq cdeg\,h$.

%
\begin{figure}[htbp]
\begin{center}
%
\setlength{\unitlength}{1cm}
\mbox{
\begin{picture}(6,4)
 \put(0,0.5){
    \thinlines
    \put(2.5,-0.5){\qbezier(0,0)(1,1)(3.5,3.5)
    \put(3.5,2.9){$y=x+w$}}
    \put(-1,1){\qbezier(0,0)(2,2)(2.6,2.6)
    \put(-1,1.8){$y=x+2s-w$}}
     \thicklines
     \put(-.1,-.1){$\bullet$}
     \put(-.5,-.2){$X$}
     \put(1.4,1.4){$\bullet$}
     \put(1,1.3){$Y$}
     \put(3.9,2.4){$\bullet$}
     \put(3.5,2.4){$Z$}
     \put(-.1,.4){$\bullet$}
     \put(-.55,.4){$Z'$}
     \put(0,0){ 
     \put(1.5,.3){$R_X$}
     \thicklines
     \put(0,0){\qbezier(0,0)(1,0)(3,0)
     \qbezier(0,0)(0,1)(0,2)
     \thinlines
     \put(-.05,2){
     \put(.2,0){.}
     \put(.4,0){.}
     \put(.6,0){.}
     \put(.8,0){.}
     \put(1,0){.}
     }
     \put(1.95,2){
     \put(.2,0){.}
     \put(.4,0){.}
     \put(.6,0){.}
     \put(.8,0){.}
     \put(1,0){.}
     }
     \put(.95,2){
     \put(.2,0){.}
     \put(.4,0){.}
     \put(.6,0){.}
     \put(.8,0){.}
     \put(1,0){.}
     }
     \put(2.95,.2){.}
     \put(2.95,.4){.}
     \put(2.95,.6){.}
     \put(2.95,.8){.}
     \put(2.95,1){.}
     \put(2.95,1.2){.}
     \put(2.95,1.4){.}
     \put(2.95,1.6){.}
     \put(2.95,1.8){.}
     }
     } 
     \put(1.5,1.5){ 
     \put(.2,1){$R_Y$}
     \thicklines
     \put(0,0){\qbezier(0,0)(1,0)(3,0)
     \qbezier(0,0)(0,1)(0,2)
     \thinlines
     \put(-.05,2){
     \put(.2,0){.}
     \put(.4,0){.}
     \put(.6,0){.}
     \put(.8,0){.}
     \put(1,0){.}
     }
     \put(1.95,2){
     \put(.2,0){.}
     \put(.4,0){.}
     \put(.6,0){.}
     \put(.8,0){.}
     \put(1,0){.}
     }
     \put(.95,2){
     \put(.2,0){.}
     \put(.4,0){.}
     \put(.6,0){.}
     \put(.8,0){.}
     \put(1,0){.}
     }
     \put(2.95,.2){.}
     \put(2.95,.4){.}
     \put(2.95,.6){.}
     \put(2.95,.8){.}
     \put(2.95,1){.}
     \put(2.95,1.2){.}
     \put(2.95,1.4){.}
     \put(2.95,1.6){.}
     \put(2.95,1.8){.}
     }
     } 
     } 
\end{picture}
} 
\caption{$Z'=E(0,\frac{4s}3)\in R_X=[0,\frac{7s}6-w)\times[\frac{7s}6,2s-w)$ is southwest of $Y=E(\frac{2s}3,\frac{5s}3)$. So, a nonzero morphism $X\to Z\cong Z'$ cannot factor through $Y$.}
\label{Fig: composition in C}
\end{center}
\end{figure}
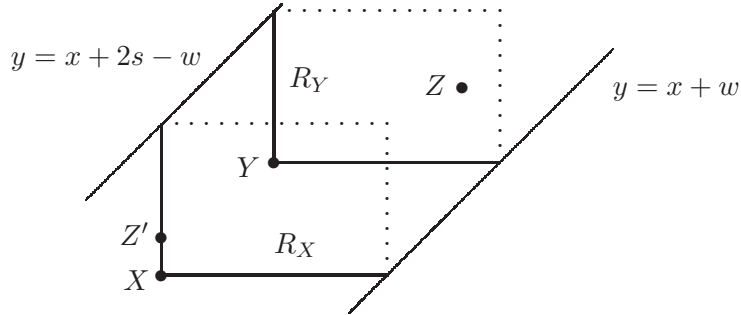

\begin{lem}
Let $0<\th< \pi$ and let $Z$ be any nonempty subset of $S^1$ with the property that, for all $z\in Z$, both $z+\th$ and $z-\th$ lie in $Z$. Let $\cF(Z)$ be the additive full subcategory of $\cF_c$, where $c=\pi-\th$, generated by objects $E(x,y)\in \cF_c$ where $x+\th\le y\le x+2\pi-\th$ and $x,y$ lie in $Z$. Then $\cF_c$ is a Frobenius category with projective-injective objects $E(x,x+\th)$ and $E(x+\th,x+2\pi)$. Consequently, the embedding $\cF(Z)\into \cF_c$ induces a strictly triangulated embedding of stable categories $\ul\cF(Z)\into \ul\cF_c\cong\cC_{r,s}$ whenever $\frac rs=\frac c\pi $.
\end{lem}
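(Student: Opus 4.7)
The plan is to apply Lemma \ref{lemma for subcategories of exact categories to be exact} with $\cE=\cF_c$ and $\cD=\cF(Z)$, then identify the projective-injective objects of $\cF(Z)$, construct two-way approximations inside $\cF(Z)$, and finally verify that the induced functor on stable categories is a strictly triangulated embedding.

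For the middle-term closure hypothesis of Lemma \ref{lemma for subcategories of exact categories to be exact}, suppose $A \cof B \onto C$ is exact in $\cF_c$ with $B \in \cF(Z)$. By definition of the exact structure on $\cF$, the underlying sequence is split exact in $\cP_{S^1}$, so $B \cong A \oplus C$ as $\cP_{S^1}$-objects. Writing $B = \bigoplus E(x_i, y_i)$ with all $[x_i], [y_i] \in Z$, the underlying $\cP_{S^1}$-object is $\bigoplus P_{[x_i]} \oplus \bigoplus P_{[y_i]}$, and since $\cP_{S^1}$ is Krull-Schmidt, every direct summand of this object decomposes into indecomposables $P_{[z]}$ with $[z] \in Z$. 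Decomposing $A = \bigoplus E(a_j, b_j)$ in $\cF_c$, the endpoints $[a_j], [b_j]$ must therefore all lie in $Z$, so $A \in \cF(Z)$; likewise $C \in \cF(Z)$. This confers an inherited exact structure on $\cF(Z)$.

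Next, the projective-injective objects of $\cF_c$ are those $E(x,y)$ with $|y-x-\pi|=c$, namely $E(x,x+\th)$ and $E(x+\th,x+2\pi)$; under the hypothesis that $Z$ is closed under $\pm\th$, any such object with $[x]\in Z$ has both endpoints in $Z$ and hence lies in $\cF(Z)$. To show $\cF(Z)$ has enough projective-injectives, I transfer the two-way approximation \eqref{two-way approximations with parity} across $\hat G$: for any indecomposable non-projective-injective $X=E(x,y)\in\cF(Z)$ (so $x+\th<y<x+2\pi-\th$), one gets the exact sequence
\[
E(x,y)\ \cof\ E(y-\th,\,y)\oplus E(x,\,x+2\pi-\th)\ \onto\ E(y-\th,\,x+2\pi-\th),
\]
whose middle term is projective-injective and whose endpoints $x,y,x-\th,y-\th$ all lie in $Z$ by the closure hypothesis; the right-hand term similarly. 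Since this is already a two-way $\cF_c$-approximation with all three terms in $\cF(Z)$, it is a two-way $\cF(Z)$-approximation, so $\cF(Z)$ has enough projective-injectives and is therefore a Frobenius category.

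For the induced functor $\underline{\cF(Z)}\to\underline{\cF_c}\cong\cC_{r,s}$ to be a strictly triangulated embedding, note that because the canonical two-way approximations in $\cF(Z)$ agree with those in $\cF_c$, Happel's construction produces the same shift $\Sig$ and the same class of distinguished triangles. Full faithfulness reduces to showing that a morphism $f:X\to Y$ in $\cF(Z)$ factors through a projective-injective of $\cF(Z)$ iff it factors through a projective-injective of $\cF_c$; the ``if'' direction is immediate, and for the converse any factorization $X\to P\to Y$ with $P$ projective-injective in $\cF_c$ extends, via injectivity of $P$ applied to the monomorphism $X\cof IX$ from the previous step, to $X\to IX\to P\to Y$, where $IX\in\cF(Z)$. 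The main obstacle I anticipate is the bookkeeping required when transferring the two-way approximation sequence across $\hat G$---in particular, keeping track of parity and of the cyclic identification $E(a,b)\cong E(b,a+2\pi)$---but once the middle-term endpoints are verified to lie in $Z$, the rest of the argument is routine.
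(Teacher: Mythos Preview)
Your proposal is correct and follows essentially the same approach as the paper: apply Lemma~\ref{lemma for subcategories of exact categories to be exact} via the split-exactness in $\cP_{S^1}$ and Krull--Schmidt to get the inherited exact structure, then observe that the projective-injective envelopes in $\cF_c$ of objects of $\cF(Z)$ already lie in $\cF(Z)$ by the $\pm\th$-closure of $Z$, and conclude via compatible two-way approximations. The paper's proof is terser---it omits the explicit form of the approximation sequence and the full-faithfulness argument you supply---but the strategy is identical.
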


\begin{proof}
Given any exact sequence $(A,d)\cof (B,d)\onto (C,d)$ in $\cF_c$ with $(B,d)\in\cF(Z)$, the components of $B\cong A\oplus C$ are projective representations of $S^1$ generated at points of $Z$. Then $A,C$ also have this property. So, $(A,d),(C,d)$ lie in $\cF(Z)$. By Lemma \ref{lemma for subcategories of exact categories to be exact}, $\cF(Z)$ is an exact category. For any object in $\cF(Z)$, its projective cover and injective envelope in $\cF_c$ lies in $\cF(Z)$ by definition. So, $\cF(Z)$ is a Frobenius category. 

Since the projective-injective objects of $\cF(Z)$ are equal to the projective injective objects of $\cF_c$ which lie in $\cF(Z)$, the embedding $\cF(Z)\into \cF_c$ induced an embedding of stable categories $\ul\cF(Z)\into\ul\cF_c\cong\cC_{r,s}$. If we make compatible choices of two-way approximation sequences using the formulas described in \eqref{two-way approximations with parity} this embedding will be strictly triangulated.
\end{proof}

\begin{thm}\label{F(Z)=C(An) is embedded in Cc}
Suppose that $\frac rs=\frac{n+1}{n+3}$ and let $Z$ be any subset of $S^1$ of the form $Z=\{a,a+\th,\cdots,a+(n+2)\th\}$ where $\th=\frac{2\pi}{n+3}$. Then the stable category of $\cF(Z)$ is equivalent to the cluster category of type $A_n$ over the field $\kk$ and the embedding $\cF(Z)\into \cF_{r\pi/s}$ induces a triangulated embedding $\ul \cF(Z)\into \cC_{r,s}$.
\end{thm}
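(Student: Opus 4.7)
My plan is to identify $\ul\cF(Z)$ explicitly with the Caldero-Chapoton-Schiffler combinatorial model of $\cC(A_n)$ from \cite{CCS06}, and then obtain the embedding statement from the preceding lemma. Label the $n+3$ points of $Z$ cyclically by $\ZZ/(n+3)$ via $i\leftrightarrow a+i\th$, so that $Z$ is the vertex set of a regular $(n+3)$-gon. An indecomposable of $\cF(Z)$ is an object $E(a+i\th,a+j\th)$ with $0\le i<j\le n+2$, and, together with the identification $E(x,y)\cong E(y,x+2\pi)$, these correspond bijectively to unordered pairs of vertices of the polygon. By the preceding lemma, the projective-injective indecomposables are exactly the $E(a+i\th,a+(i+1)\th)$, corresponding to the $n+3$ sides, so the nonzero indecomposables of $\ul\cF(Z)$ are in bijection with the $\frac{n(n+3)}{2}$ diagonals, matching the indecomposable count of $\cC(A_n)$.

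Next I would verify that this bijection extends to a $\kk$-linear equivalence of additive categories. From the explicit $\Hom$ formulas for $\cF_c$ together with the rectangle description $R_{(a,b)}$ appearing earlier in this section, each $\Hom$ space in $\cF(Z)$ is at most one-dimensional, and is nonzero precisely when the endpoints of one chord are obtained from those of the other by a controlled clockwise motion. After quotienting morphisms that factor through a side $E(a+m\th,a+(m+1)\th)$, the resulting stable $\Hom$ description matches the CCS rule for diagonals of the polygon. Since all Hom spaces in both categories are at most one-dimensional, composition is forced up to nonzero scalar and agrees with the CCS rule.

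For the triangulated structure, the preceding lemma already shows that $\cF(Z)\into\cF_{r\pi/s}$ induces a strictly triangulated embedding $\ul\cF(Z)\into\cC_{r,s}$. Using the equivalence $\hat G$ together with the formula $\Sig M(x,y)_\e=M(y+r,x+r)_\e$ in $\cD_r\double$, a direct calculation shows that $\Sig$ acts on our indecomposables by $E(u,v)\mapsto E(u-\th,v-\th)$, which on chords is the rotation $\{i,j\}\mapsto\{i-1,j-1\}$; this is exactly the CCS description of $\Sig=\t^{-1}[1]$ on $\cC(A_n)$. Finally, one transports the basic positive triangles \eqref{eq: basic up-down-up triangle} and their negative counterparts through $\hat G$ and checks that, when restricted to objects with endpoints in $Z$, they reproduce the Auslander-Reiten triangles of $\cC(A_n)$; since both categories are Hom-finite Krull-Schmidt of finite representation type, matching AR triangles upgrades the additive equivalence to a triangulated equivalence.

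The principal obstacle is this last step of matching triangulated structures. The cleanest alternative is to invoke the characterization of $\cC(A_n)$ as the unique (up to triangulated equivalence) Hom-finite Krull-Schmidt $2$-Calabi-Yau $\kk$-category containing a cluster-tilting object whose endomorphism quiver is a linear orientation of $A_n$; a cluster-tilting object of this form in $\ul\cF(Z)$ is provided by the image of any fan triangulation of the $(n+3)$-gon, so the triangulated equivalence follows.
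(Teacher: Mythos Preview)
Your proposal is correct and matches the paper's argument closely: the paper likewise identifies the nonzero indecomposables of $\ul\cF(Z)$ with diagonals of the regular $(n+3)$-gon, observes that the stable $\Hom$ spaces reproduce the CCS counterclockwise-rotation rule, and then invokes the preceding lemma for the triangulated embedding into $\cC_{r,s}$. For upgrading the additive equivalence with $\cC(A_n)$ to a triangulated one, the paper does not attempt your first route of matching AR triangles; it takes precisely your ``cleanest alternative,'' citing the Keller--Reiten characterization of acyclic $2$-Calabi--Yau categories \cite{KR} to conclude that the equivalence is triangulated.
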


\begin{proof} Take the image of $Z$ under the standard embedding of $S^1$ to the unit circle in the plane. These points form the vertices of a regular $(n+3)$-gon. The indecomposable objects of $\ul\cF(Z)$ are $E(x,y)$ where $x,y$ are nonconsecutive vertices of this regular polygon. The sides of the polygon are zero since they correspond to projective-injective objects of $\cF(Z)$. By the description of morphisms and their composition in $\cC_{r,s}$ given above, morphisms between indecomposable objects are given by counterclockwise rotation of the corresponding chords as described in \cite{CCS06}. Therefore, $\cF(Z)$ is equivalent to the cluster category of type $A_n$. By the general theorem of Keller and Reiten \cite{KR}, this is a triangulated equivalence.
\end{proof}





\section{Clusters and laminations}\label{sec3} 

This section is devoted to the definition of a cluster in the continuous cluster category $\cC=\cC_\pi$ defined in Section \ref{ss: review of Frobenius}. There is a short digression into the concept of ``laminations'' as originally introduced by W. Thurston \cite{Th}. The algebraic purpose of this digression is to explain why a maximal compatible set of indecomposable objects in $\cC$ need not form a cluster. 

In this section we let $\cM$ be the set of all isomorphism classes of indecomposable objects of $\cC$ considered as a topological space with the quotient topology. Thus $\cM$ is an open Moebius band whose elements are unordered pairs of distinct points in $S^1$ and $Ind\,\cC$ is the two fold connected covering of $\cM$ whose elements are given by ordered pairs of distinct points on the circle.

We begin by defining ``compatibility'' for indecomposable objects of the continuous cluster category. This condition is equivalent to the condition that the corresponding geodesics in the hyperbolic plane do not cross. A \emph{lamination} can then be defined to be a closed subset of pairwise compatible elements of $\cM$. We define a \emph{cluster} to be a discrete maximal lamination. Since discrete implies closed, this is the same as a discrete maximal compatible subset of $\cM$.

\subsection{Compatibility} 

\begin{defn}
Two indecomposable objects $X,Y$ of the continuous cluster category $\cC=\cC_\pi$ are defined to be \emph{compatible} if either $\cC(X,Y)=0$ or $\cC(Y,X)=0$ or $X\cong Y$.
\end{defn}

To fully understand this definition, we need to interpret it in terms of the Moebius band $\cM$ and in terms of geodesics in the hyperbolic plane. We give statements without much proof since the proofs are very straightforward, following from the definitions. We use the asymmetric notation $E(x,y)=M(x,y-\pi)$ where $x<y<x+2\pi$.

\subsubsection{Geometric interpretation of compatibility in terms of $\cM$}

\begin{prop}\label{prop: interpretation of compatible in M}
The objects compatible with $X=E(a,b)$ are $E(x,y)$ and $E(x,y)'=E(y,x+2\pi)$ where either 
\begin{enumerate}
\item $a\le x<y\le b$ ($E(x,y)$ is ``southeast'' of $E(a,b)$) or
\item $x\le a<b\le y<x+2\pi$ ($E(x,y)$ is ``northwest'' of $E(a,b)$).\qed
\end{enumerate}
\end{prop}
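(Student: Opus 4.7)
The plan is to apply the explicit description of nonzero Hom sets in $\cC_{r,s}$ from Section~2, specialized to the case $r=s=\pi$ (so $w=s-r=0$ and $c=\pi$), and then chase the possible combinatorial arrangements of the unordered pair $\{x,y\}$ relative to $\{a,b\}$ on the circle $S^1$. Specifically, I would first record that a nonzero morphism $E(a,b)\to E(x,y)$ exists if and only if, after choosing representatives with $a<b<a+2\pi$ and $x<y<x+2\pi$, one endpoint of $\{x,y\}$ lies in the half-open arc $[a,b)$ and the other in $[b,a+2\pi)$; using $E(x,y)\cong E(y,x+2\pi)$ this unifies the two conditions $a\le x<b\le y<a+2\pi$ and $a\le y<b\le x+2\pi<a+2\pi$ of the general Hom formula. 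The criterion for $\cC(Y,X)\neq 0$ is the same with the roles of the two pairs swapped.

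Next I would partition all indecomposables $Y=E(x,y)$ into four geometric cases relative to $X=E(a,b)$: (A) $\{x,y\}=\{a,b\}$; (B) both $x,y$ sit in the closed arc $[a,b]$ (southeast); (C) both $a,b$ sit in the complementary closed arc, equivalently some representative satisfies $x\le a<b\le y<x+2\pi$ (northwest); (D) the two pairs strictly interleave on $S^1$ (crossing). These cases are mutually exclusive and exhaustive. Then I would check the claim in each case. Case (A) is trivially compatible. In case (B), the Hom criterion forces $\cC(X,Y)\neq 0$ iff $y=b$ and $\cC(Y,X)\neq 0$ iff $x=a$; both cannot hold simultaneously unless $Y\cong X$, so at least one Hom vanishes and $X,Y$ are compatible. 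Case (C) follows by swapping $X$ and $Y$. In case (D), after a suitable integer shift one has $a<x<b<y<a+2\pi$, which fulfills the Hom criterion in both directions, so both Hom sets are nonzero and $X,Y$ are not compatible.

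The main obstacle will be the bookkeeping between the two representatives $E(x,y)=E(y,x+2\pi)$ of the same unordered pair: the asymmetric southeast/northwest description in the proposition uses a preferred representative, while the Hom criterion is a disjunction over representatives. Once the convention is fixed and it is verified that each arrangement on $S^1$ lands in exactly one of the cases (A)--(D), the remaining verifications reduce to routine inequality checks on the endpoints.
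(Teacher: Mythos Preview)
Your proposal is correct and is precisely the argument the paper has in mind: note that the proposition carries a \qed\ at the end of its statement and is preceded by the remark that the proofs in this subsection ``are very straightforward, following from the definitions.'' Your unpacking of the Hom description from Section~2.4 (specialized to $r=s=\pi$, $w=0$) followed by the four-case geometric partition on $S^1$ is exactly that straightforward verification made explicit, and the bookkeeping you flag about the two representatives $E(x,y)\cong E(y,x+2\pi)$ is the only point requiring any care.
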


Note that the notion of ``northwest'' and ``southeast'' depend on the choice of coordinates for $X$. However, the union of these two regions is independent of the choice of coordinates. The compatible regions form right triangles which are closed on the two short sides and open on the hypotenuse. The incompatible region is an open rectangle with $X,X'$ on opposite corners. A simple example is given in Figure \ref{Fig: compatibility in M} where $A$ is the northwest compatible region and $B$ is the southeast compatible region. Note that $\cC(Y,X)\neq0$ since morphisms go right and up. But $\cC(X,Y)\cong \cC(X,Y')=0$ since the morphism $X\to Y'$ factors through a point in the boundary (the upper left corner of $C$).

%
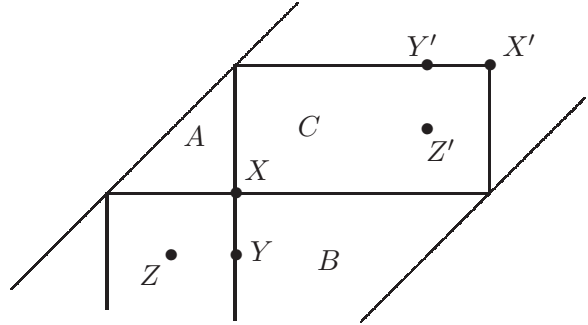
\begin{figure}[htbp]
\begin{center}
%
{
\setlength{\unitlength}{1in}
{\mbox{
\begin{picture}(4,1.7)
    \thinlines
    \put(2,.7){
      \qbezier(.33,-.67)(1,0)(1.5,.5) 
      \qbezier(-1.5,-.5)(-1,0)(0,1)
     \thicklines
  %
\qbezier(-1,0)(0,0)(1,0) 
\qbezier(-.33,-.66)(-.33,.33)(-.33,.66) 
\qbezier(1,.67)(-.1,.67)(-.33,.67) 
\qbezier(1,.67)(1,.3)(1,0) 
\qbezier(-1,0)(-1,-.4)(-1,-.6) 
\put(-.03,-.03){    
    \put(-.33,0){$\bullet$} 
    \put(-.33,-.33){$\bullet$} 
    \put(1,.67){$\bullet$}
    \put(1.1,.75){$X'$}
        \put(.67,.67){$\bullet$}
        \put(.6,.75){$Y'$}
        \put(.67,.33){$\bullet$}
        \put(.7,.2){$Z'$}
        \put(-.67,-.33){$\bullet$}
        \put(-.8,-.45){$Z$}
    }
    \put(.1,-.4){$B$}
    \put(-.6,.25){$A$}
    \put(0,.3){$C$}
    \put(-.28,.07){$X$}
    \put(-.25,-.35){$Y$}
      }
\end{picture}}
}}
\caption{$X=E(0,\frac{4\pi}3)$ is compatible with points in regions $A,B$ such as $Y=E(0,\pi)$ but not with points in the open rectangle $C$ such as $Z'=E(\pi,\frac{5\pi}3)$.}
\label{Fig: compatibility in M}
\end{center}
\end{figure}

\subsubsection{Geometric interpretation of compatibility in terms of geodesics}

In the notation $E(a,b)$, the points in our open Moebius band $\cM$ corresponds to pairs of distinct point on the circle $S^1=\RR/2\pi\ZZ$ which corresponds to geodesics in the circular model of the hyperbolic plane $\mathfrak h^2$. The pair of points $\{a,b\}$ represents the unique geodesic in $\mathfrak h^2$ converging to those two points on the \emph{circle at infinity}.

\begin{prop}\label{prop: interpretation of compatible in h}
Two indecomposable objects of $\cC$ are compatible if and only if the corresponding geodesics do not cross.\qed
\end{prop}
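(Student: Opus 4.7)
The plan is to combine Proposition~\ref{prop: interpretation of compatible in M} (which explicitly lists the objects compatible with $X=E(a,b)$) with the classical criterion for two geodesics in $\mathfrak h^2$ to cross. Recall that the geodesic underlying $X$ separates $S^1\setminus\{a,b\}$ into two open arcs $A_1$ (the image of $(a,b)$) and $A_2$ (the image of $(b,a+2\pi)$), and that two distinct geodesics with endpoint pairs $\{a,b\}$ and $\{p,q\}$ on $S^1$ cross transversely in the interior of $\mathfrak h^2$ if and only if $\{a,b\}$ and $\{p,q\}$ separate each other, i.e.\ exactly one of $p,q$ lies in $A_1$. Equivalently, the two geodesics fail to cross precisely when both $p$ and $q$ lie in a single closed arc $\overline{A_1}$ or $\overline{A_2}$, the degenerate cases $X\cong Y$ and ``sharing one endpoint at infinity'' being subsumed in this non-crossing condition.

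For the forward direction, assume $Y$ is compatible with $X$. By Proposition~\ref{prop: interpretation of compatible in M} we may choose a representative $(x,y)$ (writing $Y$ as $E(x,y)$ or as $E(x,y)'=E(y,x+2\pi)$) for which either (1) $a\le x<y\le b$ or (2) $x\le a<b\le y<x+2\pi$ holds. In case (1) both endpoints $x$ and $y$ already lie in $\overline{A_1}=[a,b]$. In case (2), the inequality $b\le y<x+2\pi$ combined with $x\le a$ yields $b<x+2\pi\le a+2\pi$, and $y\in[b,a+2\pi)$; projecting to $S^1$ this places both endpoints of $Y$ in $\overline{A_2}$. In either case the geodesics underlying $X$ and $Y$ do not cross.

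For the converse, suppose the geodesics cross, so that the endpoints of $Y$ are some $p\in A_1=(a,b)$ and $q\in A_2=(b,a+2\pi)$. Since $p\notin\overline{A_2}$ and $q\notin\overline{A_1}$, the endpoints of $Y$ do not both lie in any single $\overline{A_i}$. By the forward direction this rules out every parameterization of $Y$ satisfying case (1) or case (2) of Proposition~\ref{prop: interpretation of compatible in M}, so $Y$ is incompatible with $X$.

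The main obstacle is purely bookkeeping: the same geodesic admits the two parameterizations $E(x,y)$ and $E(x,y)'=E(y,x+2\pi)$ and their $2\pi$-translates, so one must verify that compatibility in the sense of Proposition~\ref{prop: interpretation of compatible in M} is invariant under, and detected in, every such choice. Once one fixes the identification between points of $\cM$ and unordered pairs on $S^1$, this reduces to the elementary case check carried out above.
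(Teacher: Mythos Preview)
Your argument has a genuine logical gap: you prove only one direction of the biconditional, twice. The ``forward direction'' paragraph establishes compatible $\Rightarrow$ don't cross. Your ``converse'' paragraph then assumes the geodesics \emph{cross} and concludes $Y$ is incompatible with $X$; but ``cross $\Rightarrow$ incompatible'' is the contrapositive of what you already proved, not the converse. The missing direction is ``don't cross $\Rightarrow$ compatible'' (equivalently, ``incompatible $\Rightarrow$ cross''), and nothing in your write-up addresses it.

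The fix is short and in the spirit of what you set up. Suppose the endpoints of $Y$ both lie in a single closed arc. If they lie in $\overline{A_1}=[a,b]$, choose representatives $a\le x<y\le b$; then $Y\cong E(x,y)$ satisfies case~(1). If they lie in $\overline{A_2}=[b,a+2\pi]$, choose representatives $b\le p'<q'\le a+2\pi$ and set $x=q'-2\pi$, $y=p'$; then $x\le a<b\le y<x+2\pi$, so $Y\cong E(x,y)'=E(p',q')$ satisfies case~(2). Either way Proposition~\ref{prop: interpretation of compatible in M} gives compatibility. (The paper itself states the proposition with a bare \qed, treating it as an immediate consequence of Proposition~\ref{prop: interpretation of compatible in M}; once you supply this missing half, your explicit argument is a correct unpacking of that.)
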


Figure \ref{Fig: compatibility in h} represents the same example as the one given in Figure \ref{Fig: compatibility in M}. Morphisms are given by angles which always go counterclockwise. Thus $\a,\b,\g$ are nonzero morphisms $\a:X\to Z,\b:Z\to X,\g:Y\to X$.
%
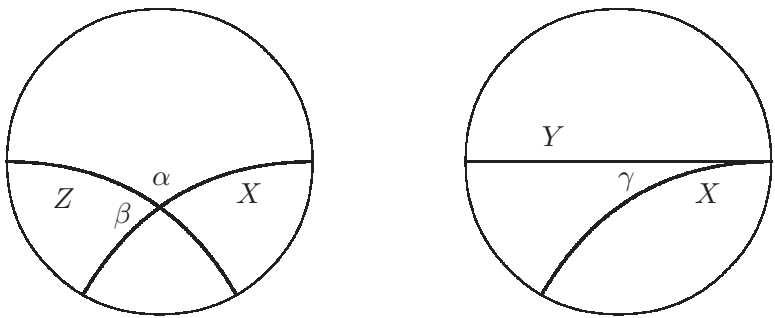
\begin{figure}[htbp]
\begin{center}
%
{
\setlength{\unitlength}{.8in}
{\mbox{
\begin{picture}(5,2)
    \thinlines
    \put(1,1){
 \qbezier(.71,.71)(1,.43)(1,0)
 \qbezier(.71,-.71)(1,-.43)(1,0)
 \qbezier(-.71,.71)(-1,.43)(-1,0)
 \qbezier(-.71,-.71)(-1,-.43)(-1,0)
  \qbezier(.71,.71)(.43,1)(0,1)
  \qbezier(.71,-.71)(.43,-1)(0,-1)
  \qbezier(-.71,.71)(-.43,1)(0,1)
  \qbezier(-.71,-.71)(-.43,-1)(0,-1)}
    \put(4,1){
 \qbezier(.71,.71)(1,.43)(1,0)
 \qbezier(.71,-.71)(1,-.43)(1,0)
 \qbezier(-.71,.71)(-1,.43)(-1,0)
 \qbezier(-.71,-.71)(-1,-.43)(-1,0)
  \qbezier(.71,.71)(.43,1)(0,1)
  \qbezier(.71,-.71)(.43,-1)(0,-1)
  \qbezier(-.71,.71)(-.43,1)(0,1)
  \qbezier(-.71,-.71)(-.43,-1)(0,-1)}
      \thicklines
 \put(1,1){     \qbezier(1,0)(0,0)(-.5,-.87)
 \put(.5,-.27){$X$}
 \qbezier(-1,0)(0,0)(.5,-.87)
 \put(-.7,-.3){$Z$} 
 \put(-.05,-.15){$\a$}
 \put(-.3,-.4){$\b$}
  }%
 \put(4,1){     \qbezier(1,0)(0,0)(-.5,-.87)
 \put(.5,-.27){$X$}
 \put(-1,0){\line(1,0){2}}
 \put(-.5,.1){$Y$}
 \put(0,-.15){$\gamma$}
  }%
\end{picture}}
}}
\caption{$X,Z$ are not compatible since they cross. But $X,Y$ are compatible since their intersection lies on the ``circle at $\infty$'' which is not in the set $\mathfrak h^2$.}
\label{Fig: compatibility in h}
\end{center}
\end{figure}

\subsection{Laminations} Following standard procedure, we would like to define a cluster to be a maximal set of pairwise compatible indecomposable objects of $\cC$. However, maximal compatible sets do not form a ``cluster structure'' because of the following fact.

\begin{prop}
Maximal compatible subsets of $\cM$ are closed.
\end{prop}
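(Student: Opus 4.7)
The plan is to reduce the closedness of a maximal compatible set $L \subseteq \cM$ to the (easier) statement that for each fixed $Y \in \cM$ the ``compatibility class''
\[
C(Y) := \{Z \in \cM : Z \text{ is compatible with } Y\}
\]
is closed in $\cM$. Once $C(Y)$ is known to be closed, I will show that maximality forces the identity $L = \bigcap_{Y \in L} C(Y)$, exhibiting $L$ as an intersection of closed sets.

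For the first reduction, note that the inclusion $L \subseteq \bigcap_{Y \in L} C(Y)$ is just the definition of $L$ being pairwise compatible. Conversely, if $Z \in \bigcap_{Y \in L} C(Y)$, then $Z$ is compatible with every element of $L$, so $L \cup \{Z\}$ is a pairwise compatible set containing $L$. Maximality then forces $Z \in L$, which gives the opposite inclusion.

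The core of the argument is therefore the claim that $C(Y)$ is closed for every $Y \in \cM$. For this I would simply read off the complement $\cM \setminus C(Y)$ from Proposition \ref{prop: interpretation of compatible in M}: the set of indecomposables incompatible with a fixed $Y = E(a,b)$ is described there as an open rectangle (with $Y$ and $Y'$ at opposite corners) in the universal cover, which descends to an open subset of the Moebius band $\cM$. Equivalently, one can appeal to Proposition \ref{prop: interpretation of compatible in h}: under the identification of $\cM$ with unordered pairs of distinct points on $S^1$, two geodesics crossing in $\mathfrak h^2$ is the open condition that the two pairs of endpoints interleave on $S^1$, so the set of $Z \in \cM$ whose geodesic crosses that of $Y$ is manifestly open.

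The only mild subtlety will be to verify that the ``open rectangle'' in the universal cover really does project to an open subset of $\cM$ (since opposite corners of this rectangle get identified), but this is immediate because the projection $Ind\,\cC \to \cM$ is a covering map and hence open. Once this is noted, the proposition follows at once by combining the two steps. The main conceptual point to emphasize is that this closedness is precisely why maximal compatible sets will in general be \emph{larger} than discrete laminations, motivating the definition of a cluster as a discrete maximal lamination in the sequel.
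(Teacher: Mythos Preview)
Your proposal is correct and follows essentially the same route as the paper: both show that $C(Y)$ is closed (invoking Proposition~\ref{prop: interpretation of compatible in M}) and then use maximality to write $L=\bigcap_{Y\in L}C(Y)$ as an intersection of closed sets. You supply a little more detail on why the complement of $C(Y)$ is open and on the covering-space projection, but the argument is the same.
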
 

Before we prove this, we want to point out the implication for cluster mutation. Suppose that $\cT$ is a maximal compatible set and $T\in \cT$. If $T$ is a limit point of $\cT$ then we cannot mutate $T$ by the obvious fact that, if we replace $T$ with a different object $T^\ast$ then the new set will not be closed. In other words, all limit points are ``frozen''. One example is the vertical line $\cT=0\times (-\pi,\pi)$, a maximal compatible set in which every point is a limit point. Thus, we will define a cluster to be a \emph{discrete} maximal compatible set.

\begin{proof} 
Suppose that $\cL$ is a maximal compatible subset of $\cM$. Then for any $T\in \cL$, $\cL$ is contained in the set $C(T)$ of all points in $\cM$ which are compatible with $T$. It follows from Proposition \ref{prop: interpretation of compatible in M} that $C(T)$ it a closed set. Since this holds for all $T\in \cL$ we have:
\[
	\cL\subseteq \bigcap_{T\in \cL}C(T)
\]
We claim that $\cL=\bigcap_{T\in \cL}C(T)$. Otherwise, we can add one element of $\bigcap_{T\in \cL}C(T)\backslash \cL$ to $\cL$ to make a larger compatible set contradicting the maximality of $\cL$. Therefore, $\cL=\bigcap_{T\in \cL}C(T)$. But any intersection of closed sets is closed. So, $\cL$ is closed.
\end{proof}

Now we make a brief digression to study maximal compatible subsets of $\cM$ in general.

\begin{defn}
A \emph{lamination} of the hyperbolic plane $\mathfrak h^2$ is a closed subset $C$ of $\mathfrak h^2$ and a homeomorphism $S\times \RR\cong C$, for some space $S$, which is equal to the exponential map on the second coordinate. In other words, this is a family of disjoint geodesics in $\mathfrak h^2$ parametrized by the topological space $S$.
\end{defn}

Since geodesics in the hyperbolic plane are disjoint if and only if the corresponding objects in $\cM$ are compatible, laminations of $\mathfrak h^2$ correspond to closed subsets of $\cM$ whose elements are pairwise compatible. We also call these sets ``laminations.'' To make sure that this interpretation is accurate, we need the following proposition.

\begin{prop}
A compatible subset $\cL$ of $\cM$ is closed if and only if the union $C$ of the corresponding geodesics in $\mathfrak h^2$ is closed.
\end{prop}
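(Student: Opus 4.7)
The plan is to use that an unoriented geodesic in $\mathfrak h^2$ is determined by its pair of endpoints on the circle at infinity $S^1$, and that the identification $\cM\cong\{$pairs of distinct points in $S^1\}$ makes convergence in $\cM$ equivalent to convergence of the endpoint pairs. Inside $\mathfrak h^2$, the following ``endpoint stability'' will be invoked repeatedly: if $\gamma_n$ is a sequence of geodesics and $q_n\in\gamma_n$ with $q_n\to q\in\mathfrak h^2$ (the open disk), then after passing to a subsequence the endpoint pairs $\{a_n,b_n\}$ converge to a pair $\{a,b\}$ with $a\neq b$ (the alternative $a=b$ forces $\gamma_n$ to exit every compact subset of $\mathfrak h^2$), and the corresponding limiting geodesic $\gamma=\{a,b\}$ contains $q$.

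For the forward direction, assume $\cL$ is closed in $\cM$ and let $q\in\mathfrak h^2$ be a limit point of $C$. Choose $q_k\to q$ with $q_k\in\gamma_k$ for some $\gamma_k\in\cL$. By endpoint stability, a subsequence of $\gamma_k$ converges in $\cM$ to some $\gamma$ with $q\in\gamma$. Since $\cL$ is closed, $\gamma\in\cL$, so $q\in\gamma\subseteq C$; hence $C$ is closed.

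For the converse, assume $C$ is closed in $\mathfrak h^2$ and suppose $\gamma_n\to\gamma$ in $\cM$ with $\gamma_n\in\cL$; the goal is $\gamma\in\cL$. Pick an interior point $p\in\gamma\subseteq\mathfrak h^2$. Convergence in $\cM$ produces $p_n\in\gamma_n$ with $p_n\to p$, so $p_n\in C$ and closedness of $C$ gives $p\in C$, i.e.\ $p\in\gamma'$ for some $\gamma'\in\cL$. The key step is then $\gamma'=\gamma$. If not, two distinct geodesics through the interior point $p$ must cross transversally there (they cannot merely be asymptotic, since $p$ is not on the boundary circle). Transversality is an open condition: for all sufficiently large $n$ the nearby geodesic $\gamma_n$ also crosses $\gamma'$ in a neighborhood of $p$. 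But $\gamma_n,\gamma'\in\cL$ are compatible, hence non-crossing by Proposition \ref{prop: interpretation of compatible in h}, a contradiction. So $\gamma'=\gamma\in\cL$ and $\cL$ is closed.

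The main obstacle is the converse direction; a priori one might worry that $\gamma$ is ``swept out'' pointwise by infinitely many distinct leaves of $\cL$ meeting it only once each. The transversal-crossing device above, combined with the $\cM$-convergence $\gamma_n\to\gamma$, rules this out cleanly: any leaf of $\cL$ through a single interior point of $\gamma$ is forced by non-crossing to coincide with almost every $\gamma_n$, hence with $\gamma$. The one case requiring care, $\gamma'$ and $\gamma$ sharing an endpoint at infinity without crossing in $\mathfrak h^2$, is excluded because the common point $p$ lies in the open disk, not on $S^1$.
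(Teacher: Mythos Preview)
Your argument is correct, and it takes a genuinely different route from the paper's in both directions.

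For the forward implication, the paper argues the complement of $C$ is open: given $p\notin C$, the set $K(p)\subset\cM$ of geodesics through $p$ is compact and disjoint from the closed set $\cL$, and this persists for $q$ near $p$, so a neighborhood of $p$ misses $C$. You instead argue sequentially, invoking the ``endpoint stability'' fact that geodesics through a convergent sequence of interior points have subsequentially convergent endpoint pairs with distinct limits. Both are fine; the paper's version packages the continuity once via compactness of $K(p)$, while yours makes the geodesic geometry explicit.

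For the converse, the paper proceeds by contrapositive: if $\cL$ has a limit point $E(x,y)\notin\cL$, then since compatibility is a closed condition in $\cM\times\cM$, the limit geodesic is compatible with (hence disjoint from) every leaf of $\cL$, so it lies outside $C$; but its midpoint is a limit of midpoints in $C$, so $C$ is not closed. Your direct argument is different and rather nice: you show that any leaf $\gamma'\in\cL$ passing through an interior point $p$ of the limit $\gamma$ must equal $\gamma$, because otherwise the strict interlacing of endpoints (forced by $p$ being interior) is an open condition, so nearby $\gamma_n\in\cL$ would cross $\gamma'\in\cL$, contradicting compatibility. The paper's route is shorter and avoids the transversality discussion by appealing to the single fact that the compatible locus in $\cM^2$ is closed; your route is more hands-on but has the virtue of not needing that lemma, relying only on the endpoint characterization of crossing.
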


\begin{proof}
Suppose first that $\cL$ is closed in $\cM$ and $p$ is a point in $\mathfrak h^2$ which does not lie in $C$. For each point $x$ in the circle at $\infty$, there is a unique geodesic passing through $p$ and converging to $x$. This set of geodesics is compact, being homeomorphic to $S^1$ and the corresponding subset $K(p)$ of $\cM$ is also compact and disjoint from $\cL$ by construction. Since $K(p)$ depends continuously on $p$, there is a neighborhood $U$ of $p$ in $\mathfrak h^2$ so that $K(q)$ is disjoint from the closed set $\cL$ (as subset of $\cM$) for every $q\in U$. Then $U$ is disjoint from $C$ (as subsets of $\mathfrak h^2$) and we conclude that $C$ is closed in $\mathfrak h^2$.

Conversely, suppose that $\cL$ is not closed in $\cM$ and $E(x,y)$ is a limit point of $\cL$. Then the union of $\cL$ with $E(x,y)$ will be compatible since compatiblity is a closed condition in the sense that the set of all compatible pairs of points in $\cM$ is a closed subset of $\cM^2$. Therefore, the geodesic converging to $x,y$ on the circle at $\infty$ is disjoint from $C$ but the center points of some geodesics in $C$ converge to the center point of this new geodesic. Therefore $C$ is not closed in $\mathfrak h^2$ in this case.
\end{proof}

\subsubsection{Description of all maximal laminations}

The following theorem is essentially due to W. Thurston \cite{Th}.

\begin{prop} Any maximal lamination $\cL\subseteq\cM$ is locally embeddable in $\RR$ in the sense that every point in $\cL$ has a closed neighborhood $N$ which is homeomorphic to a compact subset $C$ of $\RR$. Conversely, all nonempty compact subsets $C$ of $\RR$ occur in this way. (There exists a maximal lamination $\cL$ and point in $\cL$ having a neighborhood homeomorphic to $C$.)
\end{prop}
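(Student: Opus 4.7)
The plan is a local coordinate computation on $\cM$. Fix $\gamma_0 = E(a_0, b_0) \in \cL$ and parametrize a small neighborhood of $(a_0, b_0)$ in $\cM$ by the pair of endpoints $(a, b)$. By Proposition \ref{prop: interpretation of compatible in M}, the set of points in this neighborhood compatible with $\gamma_0$ is the union of two closed ``quadrants''
\[
Q_+ = \{(a, b) : a \ge a_0,\ b \le b_0\}, \qquad Q_- = \{(a, b) : a \le a_0,\ b \ge b_0\},
\]
meeting only at $(a_0, b_0)$. Within a sufficiently small rectangle around $(a_0, b_0)$, the subintervals of $S^1$ represented by points of $Q_+$ all contain a common subinterval of $[a_0, b_0]$, so any two compatible points of $\cL$ in $Q_+$ must be nested (they cannot be disjoint). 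The analogous statement holds in $Q_-$. Therefore the linear functional
\[
\phi(a, b) := (a - a_0) + (b_0 - b)
\]
is a continuous injection of $\cL \cap (Q_+ \cup Q_-)$ into $\RR$ (non-negative on $Q_+$, non-positive on $Q_-$, vanishing only at $\gamma_0$, strictly monotone on each quadrant by the nesting condition). Restricting to a compact neighborhood of $\gamma_0$ in $\cL$ then yields a continuous injection from a compact space into $\RR$, hence a homeomorphism onto a compact subset of $\RR$.

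\textbf{Converse direction.} Given a nonempty compact $C \subseteq \RR$, rescale so $C \subseteq [0, 1]$ and pass to the upper half-plane model of $\mathfrak{h}^2$. For each $c \in C$ let $\gamma_c = \{x = c\}$ be the vertical geodesic, and introduce two \emph{blocker} geodesics $\gamma_{-1} = \{x = -1\}$ and $\gamma_2 = \{x = 2\}$. Set $\cL_0 := \{\gamma_c : c \in C\}$ and $\cL_1 := \cL_0 \cup \{\gamma_{-1}, \gamma_2\}$; this is a set of pairwise disjoint geodesics. Extend $\cL_1$ to a maximal lamination $\cL$ as follows: in each bounded gap strip $(c_1, c_2) \subseteq [-1, 2] \setminus (C \cup \{-1, 2\})$ add the half-circle $\delta_{c_1,c_2}$ from $c_1$ to $c_2$ and then iteratively subdivide the region below it by further half-circles with endpoints in $[c_1, c_2]$; in each of the two unbounded outer strips $(-\infty, -1)$ and $(2, \infty)$, pick any maximal lamination of that half-plane.

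Now I claim $\cL_0$ is a clopen subset of $\cL$ homeomorphic to $C$ via $c \mapsto \gamma_c$. Compactness of $C$ makes this a homeomorphism onto $\cL_0$, and $\cL_0$ is therefore compact, hence closed in $\cL$. For openness in $\cL$, I check that no $\delta \in \cL \setminus \cL_0$ is close to any $\gamma_{c_0}$ ($c_0 \in C$) in the topology of $\cM$. Such a $\delta$ close to $\gamma_{c_0} = \{c_0, \infty\}$ would have its endpoints approximating $\{c_0, \infty\}$, so up to labeling one endpoint lies near $c_0 \in [0,1]$ and the other near $\infty$. But every added geodesic falls into one of three cases: (i) $\delta \in \{\gamma_{-1}, \gamma_2\}$, whose finite endpoint is bounded away from $[0, 1]$; (ii) $\delta$ is a half-circle with both endpoints in some gap $[c_1, c_2] \subseteq [-1, 2]$, so both endpoints are bounded and neither can approach $\infty$; (iii) $\delta$ lies in an outer strip, whose finite endpoint lies in $(-\infty, -1) \cup (2, \infty)$, bounded away from $[0, 1]$. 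None of these cases admits the required endpoint configuration, proving $\cL_0$ is open in $\cL$. Thus $\cL_0$ itself is a closed neighborhood of each of its points, homeomorphic to $C$.

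\textbf{Main obstacle.} The forward direction is essentially routine once the two-quadrant local picture is set up. The real technical content is in the converse, where one must engineer a maximal lamination in which the pre-chosen family $\cL_0$ survives as a clopen subset. A blind application of Zorn's lemma is not enough: without the blocker geodesics $\gamma_{-1}$ and $\gamma_2$, a Zorn extension could introduce arbitrarily wide half-circles whose endpoints accumulate at both $c_0$ and $\infty$, and without the explicit half-circle triangulation of each gap strip, the extension could add vertical lines $\{x = c'\}$ with $c' \in [0,1] \setminus C$ that cluster against $\cL_0$. The blockers plus the half-circle triangulation are exactly what rule both of these out.
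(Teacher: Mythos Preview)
Your forward direction is correct and in fact identical to the paper's: your functional $\phi(a,b)=(a-a_0)+(b_0-b)=(a-b)-(a_0-b_0)$ is exactly the paper's coordinate-difference map $(x,y)\mapsto x-y$ shifted by a constant. The paper states the injectivity in one line (two points $E(x,y)$ and $E(x+d,y+d)$ with equal difference are incompatible for small $d$), while you unpack this via the quadrant/nesting picture, but the content is the same.

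Your converse is correct but genuinely different from the paper's. The paper stays in the $E(x,y)$ coordinates on the Moebius band: it rescales $C$ into an interval $[a,b]\subset(\pi,3\pi/2)$, realizes $C$ as the set $\{0\}\times C$, and then fills in the rest of the lamination by an explicit list of half-open vertical segments (one family $\{0\}\times((0,\pi]\cup[3\pi/2,2\pi))$, one family over each gap $(x_i,y_i)$ of $C$, and two short connector families). Your construction instead passes to the upper half-plane model, realizes $C$ as a family of vertical geodesics, and uses blocker lines plus half-circle triangulations of the gap strips. Both are explicit and both work. The paper's version is more formulaic and makes closedness and maximality essentially readable from the interval endpoints; yours is more geometric and makes the ``why does $\cL_0$ stay clopen'' reasoning transparent, at the cost of a slightly more delicate (though correct) case analysis for openness. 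Your ``Main obstacle'' paragraph correctly isolates the real point: a blind Zorn extension need not keep $\cL_0$ clopen, so some explicit control (your blockers and gap-triangulations, or the paper's interval formulas) is required.
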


\begin{proof}
Take any $E(a,b)\in \cL$. Then the coordinate difference function $(x,y)\mapsto x-y$ gives a continuous mapping from a neighborhood of $E(a,b)$ in $\cL$ to a neighborhood of $a-b$ in $\RR$. This mapping is 1-1 since any two points $E(x,y), E(x+d,y+d)$ with the same difference in coordinates are not compatible if $d<|x-y|$. Restriction of this map to a compact neighborhood $N=\cL\cap [a-\e,a+\e]\times[b-\e,b+\e]$ gives a homeomorphism of $N$ with a compact neighborhood of $a-b$.

Conversely, suppose that $C$ is any compact subset of $\RR$. By rescaling and adding disjoint points to $C$ if necessary, we may assume that $C$ lies in a closed interval $[a,b]$ where $\pi<a< b<3\pi/2$ and $a,b\in C$. Then the complement of $C$ in $[a,b]$ is a countable union of disjoint open intervals $(x_i,y_i)$. Then we can take $\cL$ to be the set with $E$-coordinates:
\[
	\cL=0\times \left((0,\pi]\cup C\cup [3\pi/2,2\pi)\right) \oplus \pi\times (\pi,a] \oplus b\times(b,3\pi/2]\oplus\bigoplus_i x_i\times (x_i,y_i]
\]
This is a maximal compatible set having a relatively open and closed subset $0\times C$ which is homeomorphic to $C$.
\end{proof}

\subsubsection{Clusters}

\begin{defn} A \emph{cluster} in $\cC$ is defined to be a discrete maximal lamination $\cT$ in $\cM$. By \emph{discrete} we mean that every $E(x,y)$ in $\cT$ has an open neighborhood that contains no other object of $\cT$.
\end{defn}

\begin{defn}
The \emph{standard cluster} is the set $\cT_0$ of all objects in $\cM$ with $E$-coordinates
\[
	\left(\frac{m\pi}{2^n},\frac{(m+1)\pi}{2^n}\right)
\]
for integers $n\ge0$ and $0\le m<2^{n+1}$. These objects are all nonisomorphic except for $E(0,\pi)\cong E(\pi,2\pi)$. If we reverse the parity of any of the objects in $\cT_0$ we will still consider it to be a standard cluster.
\end{defn}

\begin{prop}
The standard cluster $\cT_0$ is a cluster. In particular, clusters exist in $\cC_\pi$.
\end{prop}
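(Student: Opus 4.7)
The plan is to verify three properties: pairwise compatibility of all objects in $\cT_0$, discreteness, and maximality among compatible subsets of $\cM$.

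For pairwise compatibility I would use the standard dichotomy that two dyadic intervals $[m/2^n,(m+1)/2^n]$ and $[m'/2^{n'},(m'+1)/2^{n'}]$ in $[0,2]$ are either nested or have disjoint interiors. In the nested case one chord is ``southeast'' of the other and Proposition \ref{prop: interpretation of compatible in M} gives compatibility; in the disjoint case I apply the isomorphism $E(x,y)\cong E(y,x+2\pi)$ to reposition one chord so that the pair is in the ``northwest'' configuration, again compatible. For discreteness, fix $T=E(m\pi/2^n,(m+1)\pi/2^n)\in\cT_0$: every other $T'=E(m'\pi/2^{n'},(m'+1)\pi/2^{n'})$ has either a different length ($\pi/2^{n'}\ne\pi/2^n$, differing by at least a factor of $2$) or $n'=n$ but endpoints displaced by at least $\pi/2^n$; in either case a ball of radius $\pi/2^{n+2}$ about $T$ in $\cM$ contains no other element of $\cT_0$.

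The main step is maximality. The plan is to show that $\cT_0$ realizes an ideal triangulation of the Poincar\'e disk with vertices at the dyadic points $m\pi/2^n$ on $S^1$, whose triangles are the ideal triangles $T_{n,m}$ with vertex set $\{m\pi/2^n,(2m+1)\pi/2^{n+1},(m+1)\pi/2^n\}$. I would prove this by induction on $n$: after including all chords of $\cT_0$ of level $\le n$, the disk is partitioned into the triangles $T_{n',m}$ for $n'\le n$ together with $2^{n+1}$ ``corner regions,'' each bounded by one level-$n$ chord and one arc of $S^1$; the level-$(n+1)$ chords then split each corner into one new ideal triangle plus two smaller corners. Density of dyadic rationals in $S^1$ forces the corners to shrink to width zero in the limit, yielding a genuine ideal triangulation.

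Given this, if $E(x,y)$ is compatible with every element of $\cT_0$, then its geodesic does not cross any edge of the triangulation, so its interior lies in a single open triangle; then $x,y$ must be two of the three vertices of that triangle and $E(x,y)$ is the corresponding edge, hence in $\cT_0$. The main obstacle I expect is the limit step of the triangulation argument --- verifying rigorously that the corner regions exhaust the disk as $n\to\infty$ and that a geodesic confined to the triangle closures must itself be an edge. As a fallback one can bypass the triangulation and argue directly using density of dyadic rationals, producing for each $E(x,y)\notin\cT_0$ an explicit crossing chord in $\cT_0$: if $x$ (say) is not a dyadic multiple of $\pi$, then for $n$ large enough the unique dyadic chord $E(m\pi/2^n,(m+1)\pi/2^n)$ whose open arc contains $x$ has both endpoints outside the arc between $x$ and $y$ containing $y$, hence crosses $E(x,y)$; and if $x,y$ are both dyadic but $E(x,y)\notin\cT_0$, a chord at a suitably chosen level straddles an endpoint of $E(x,y)$ and again crosses it.
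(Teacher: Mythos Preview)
Your proposal is correct, and your fallback argument for maximality is essentially the paper's own proof: the paper shows directly that if $E(x,y)$ is compatible with all of $\cT_0$ then $x$ must be a dyadic multiple of $\pi$ (otherwise a short enough dyadic chord around $x$ crosses it), and then pins down $|x-y|$ by comparing with the dyadic chords at the appropriate scale, producing an explicit crossing chord whenever $E(x,y)\notin\cT_0$. Your compatibility argument via the nested/disjoint dichotomy for dyadic intervals is also exactly the paper's.

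Your primary route for maximality, via the ideal triangulation of the disk, is genuinely different. It is more conceptual and explains \emph{why} $\cT_0$ is maximal (it is the edge set of a triangulation, so any extra geodesic must cross an edge), whereas the paper's argument is a direct case analysis. The cost is precisely the limit step you flag: one must check that the corner regions at level $n$ have Euclidean diameter tending to zero, so that every interior point of the disk eventually lies in some triangle $T_{n',m}$; then a geodesic $g$ not equal to any edge and not crossing any edge is disjoint from the edge set (distinct non-crossing geodesics are disjoint in the open disk), hence lies in a single open ideal triangle, forcing its ideal endpoints to be two of that triangle's three (dyadic) vertices. This works, but the paper's approach avoids the covering argument entirely and is shorter. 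You also verify discreteness explicitly, which the paper omits; it is of course immediate from the fact that only finitely many elements of $\cT_0$ have length $\ge\pi/2^n$ for any fixed $n$.
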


\begin{proof} To see that the objects in the standard cluster are compatible, look at the interval given by the formula. Clearly any two of these with the same $n$ are disjoint and thus compatible and any two with different $n$ are either disjoint or one contains the other. So they are compatible. 

To show that this set is maximal, take any $E(x,y)$ which is compatible with all of these objects but is not in this set. The first coordinate $x$ must be an integer times $\pi/2^n$ for some $n$. Otherwise, for some sufficiently large $n$ we will have $m\pi/2^n<x<(m+1)\pi/2^n$ and $y$ outside this interval making the two geodesics cross. Thus $x=a\pi/2^n$ where $a=2k+1$ is odd. Then $x$ lies in the center of the interval $(k\pi/2^{n-1},(k+1)\pi/2^{n-1})$. To make $E(x,y)$ compatible with the corresponding object of $\cT_0$ we must have $|x-y|\le \pi/2^n$. 

If $|x-y|=\pi/2^m$ for any $m\ge n$ then $E(x,y)$ lies in $\cT_0$. Therefore, if $E(x,y)$ does not belong to the standard cluster, there exists an $m\ge n$ so that $\pi/2^m>|x-y|>\pi/2^{m+1}$. Suppose $y>x$. Then the object $E(x+\pi/2^{m+1},x+\pi/2^m)\in\cT_0$ crosses $E(x,y)$ which is a contradiction. The case $y<x$ is similar. Therefore $E(x,y)$ lies in $\cT_0$ making $\cT_0$ a maximal compatible set and therefore a cluster in $\cC$. 
\end{proof}

In the next section we will show that clusters in $\cC$ are unique up to triangulated automorphisms of $\cC$. This uniqueness theorem will make it easier to describe properties of arbitrary clusters.




\section{Automorphism of the continuous cluster category}\label{sec4}


\subsection{Linear subcategories and automorphisms of $\cC$} For each $z\in S^1=\RR/2\pi\ZZ$ let $\cL_z$ be the full subcategory of $\cC$ consisting of all indecomposable objects with one end at $z$. These are $E(z,x)$ and the isomorphic objects $E(z,x)'=E(x-2\pi,z)$ for all $z<x<z+2\pi$. We view these objects as lying on a vertical line or on a horizontal line. We call $\cL_z$ the \emph{linear subcategory} of $\cC$ at the end $z$. Note that $\cL_x\cap \cL_y=\{E(x,y),E(y,x+2\pi)\}$ has one object up to isomorphism since $E(x,y)\cong E(y,x+2\pi)$.

\begin{lem}\label{lem: characterization of linear subcategories}
A full subcategory $\cL$ of $\cC$ is equal to a linear subcategory $\cL_z$ for some $z$ if and only if it satisfies the following conditions.
\begin{enumerate}
\item $\cL$ has an infinite number of objects, and they are all indecomposable.
\item Any two objects $X,Y\in\cL$ are compatible and $\cC(X,Y)\oplus\cC(Y,X)\neq0$.
\item $\cL$ is maximal with the above two properties.
\end{enumerate}
\end{lem}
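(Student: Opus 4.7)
The plan is to reduce both directions of the lemma to the single geometric criterion that two distinct nonisomorphic indecomposables $X = E(a,b)$ and $Y = E(x,y)$ in $\cC$ satisfy condition (2) precisely when the corresponding geodesics share exactly one endpoint on the circle at infinity. To prove this criterion I would combine Proposition~\ref{prop: interpretation of compatible in M} (compatibility means the two endpoint pairs are not interleaved on $S^1$) with the rectangle description of Hom in Section~2.4 specialized to $r=s=\pi$, $w=0$: a nonzero morphism $E(a,b)\to E(x,y)$ exists iff some representative of $Y$ places one endpoint in $[a,b)$ and the other in $[b,a+2\pi)$. Running through the three compatibility subcases (strictly nested, disjoint, shared endpoint), a direct inspection of both representatives $E(x,y)$ and $E(y,x+2\pi)$ shows the Hom-rectangle condition fails in the nested and disjoint cases, while the shared-endpoint case is exactly the boundary situation producing a morphism in one of the two directions.

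With the criterion in hand, the ``only if'' direction is immediate: $\cL_z$ consists of the (infinitely many) indecomposables $E(z,x)$ for $x \in S^1 \setminus \{z\}$, modulo $E(z,x)\cong E(x,z+2\pi)$, giving property (1); every pair shares $z$, giving (2); and for maximality, any proposed extra $E(a,b)$ with $a,b\not\equiv z$ fails (2) against any $E(z,w)\in\cL_z$ with $w\notin\{a,b\}$.

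For the ``if'' direction, assume $\cL$ satisfies (1)--(3). By (1) pick non-isomorphic $X_1, X_2 \in \cL$, and by the criterion write $X_1 = E(z,a)$, $X_2 = E(z,b)$ with $a\neq b$. For any other $X_3 \in \cL$, the criterion forces $X_3$ to share an endpoint with each of $X_1, X_2$, so the endpoints of $X_3$ lie in $\{z,a,b\}$: either $X_3$ contains $z$ (done), or else $X_3 = E(a,b)$. In the latter case, I invoke (1) to pick a fourth non-isomorphic $X_4 \in \cL$ and rerun the endpoint analysis against $X_1, X_2, X_3$, which forces the endpoints of $X_4$ to lie in $\{z,a,b\}$ as well, hence $X_4 \in \{X_1,X_2,X_3\}$, a contradiction. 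Therefore every $X_3 \in \cL$ has $z$ as an endpoint, so $\cL \subseteq \cL_z$; combining this with the fact that $\cL_z$ itself satisfies (1) and (2), the maximality hypothesis (3) forces $\cL = \cL_z$.

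The main obstacle is the geometric criterion in the first step, since one must carefully track both representatives $E(x,y)$ and $E(y,x+2\pi)$ when checking the Hom-rectangle and verify that none of the nested or disjoint configurations produces a morphism in either direction; the rest of the argument is a clean case analysis. The only subtlety in the endpoint-sharing step is that infiniteness (1) is essential for ruling out the degenerate ``three points of $S^1$'' triangle $\{E(z,a), E(z,b), E(a,b)\}$, which itself satisfies (2) but is of course finite.
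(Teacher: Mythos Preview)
Your proposal is correct. Both your argument and the paper's hinge on the same geometric criterion --- condition (2) for two nonisomorphic indecomposables holds exactly when they share an endpoint --- and you are right that this is the crux (the paper asserts it in one line, while you spell out how to extract it from the Hom-rectangle and compatibility descriptions). The difference lies only in how the ``if'' direction is finished. The paper fixes a single $X=E(x,y)\in\cL$, observes that every other object shares one of its two ends, uses infiniteness to get an infinite subfamily all containing (say) $x$, and then notes that any further object, having only two ends, cannot share an end with each member of this infinite family unless it too contains $x$. You instead pick two objects $X_1,X_2$ sharing $z$, show that any $X_3$ not containing $z$ is forced to be $E(a,b)$, and then use a fourth object to derive a contradiction from the fact that there are only three unordered pairs in $\{z,a,b\}$. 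Both arguments use infiniteness in an essential way (the paper for its pigeonhole, you to produce $X_4$), and both are clean; the paper's version is marginally shorter, while yours makes the role of infiniteness more explicit via the ``triangle'' $\{E(z,a),E(z,b),E(a,b)\}$ that you correctly flag as the degenerate case.
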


\begin{proof}
Any linear subcategory $\cL_z$ satisfies these three properties. Conversely, suppose that $\cL$ is a full subcategory of $\cC$ satisfying the above properties. Let $X=E(x,y)\in \cL$. Condition (2) implies that each object of $\cL$ shares at least one end ($x$ or $y$) with $X$. By Condition (1) there are infinitely many objects in $\cL$ with one of these ends, say $x$. To share one end with such an infinite family, it must be that every object of $\cL$ has one end at $x$. By Condition (3), $\cL=\cL_x$. 
\end{proof}

\begin{prop} For any $\kk$-linear automorphism $\Phi$ of $\cC$, there is a unique orientation preserving homeomorphism $\f$ of $S^1$ so that $\Phi E(x,y)\cong E(\f (x),\f(y))$.
\end{prop}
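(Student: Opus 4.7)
The strategy is to extract $\f$ from the action of $\Phi$ on the linear subcategories $\cL_z$ characterized in Lemma \ref{lem: characterization of linear subcategories}. The three conditions there (infinitely many indecomposable objects, pairwise compatibility with nonzero Hom, and maximality among such subcategories) are all categorical, hence invariant under any $\kk$-linear automorphism $\Phi$. So $\Phi$ permutes the family $\{\cL_z : z\in S^1\}$, and I define $\f:S^1\to S^1$ by $\Phi\cL_z=\cL_{\f(z)}$; its inverse is induced by $\Phi^{-1}$, so $\f$ is a bijection. Since $E(x,y)$ is, up to isomorphism, the unique object of $\cL_x\cap \cL_y$ when $x\ne y$, this forces $\Phi E(x,y)\cong E(\f(x),\f(y))$. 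Uniqueness of $\f$ is immediate: $\f(z)$ is determined as the unique common end of all objects in $\Phi(\cL_z)$.

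To show $\f$ preserves cyclic orientation, I would use the directionality of Hom inside a single linear subcategory. Using the description of $\cC_{r,s}$ from Section 2.4 with $r=s=\pi$ (so $w=0$), for $z<x<y<z+2\pi$ the point $(z,y)$ lies in the half-open rectangle $R_{(z,x)}=[z,x)\times[x,z+2\pi)$ but $(z,x)$ does not lie in $R_{(z,y)}$; thus $\cC(E(z,x),E(z,y))\ne 0$ while $\cC(E(z,y),E(z,x))=0$. The direction of nonzero Hom inside $\cL_z$ therefore detects the counterclockwise linear order on $S^1\setminus\{z\}$ starting at $z$. Since $\Phi$ is an equivalence it preserves non-vanishing of Hom, so within $\Phi\cL_z=\cL_{\f(z)}$ the induced map on objects preserves this linear order. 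Translating back, $\f$ preserves the ternary cyclic order on $S^1$.

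To conclude that $\f$ is a homeomorphism, I would lift $\f$ to $\tilde\f:\RR\to\RR$ satisfying $\tilde\f(x+2\pi)=\tilde\f(x)+2\pi$. Cyclic-order preservation of $\f$ translates to strict monotonicity of $\tilde\f$, and a strictly monotonic bijection of $\RR$ is automatically continuous (no jump discontinuities are possible, since then its image would miss an interval). Hence $\tilde\f$ is a homeomorphism of $\RR$ and $\f$ is a homeomorphism of $S^1$; orientation preservation is exactly the counterclockwise-cyclic-order preservation established in the previous step.

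The main obstacle is the middle step: compatibility alone is a symmetric relation and only records which pairs of geodesics cross, so it cannot by itself distinguish an orientation-preserving from an orientation-reversing bijection of $S^1$. The chirality must be extracted from the asymmetric directionality of morphisms, which is why I use Hom-direction within a single $\cL_z$ rather than only the compatibility relation. Once that linear order on $\cL_z$ is correctly identified with counterclockwise position from $z$, the remaining steps (bijectivity, continuity, homeomorphism) are formal.
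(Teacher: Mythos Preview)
Your proof is correct and follows essentially the same strategy as the paper's: use the categorical characterization of the $\cL_z$ to define $\f$, then extract cyclic-order preservation from the asymmetry of $\Hom$. The only cosmetic difference is that the paper detects orientation via the three objects $E(x,y),E(y,z),E(x,z)$ lying in the pairwise intersections $\cL_x\cap\cL_y$, etc., and the one-way cycle of nonzero morphisms among them, whereas you detect it by the $\Hom$-direction between two objects inside a single $\cL_z$; you also spell out the ``cyclic-order-preserving bijection $\Rightarrow$ homeomorphism'' step that the paper simply asserts.
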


\begin{proof}
The image $\Phi(\cL_z)$ of any linear subcategory $\cL_z$ satisfies the conditions of the lemma above. It follows that $\Phi(\cL_z)=\cL_x$ for some $x\in S^1$. Since $\Phi^{-1}$ is also an automorphism, $\Phi$ induces a permutation $\f$ of the elements of $S^1$ so that $\Phi(\cL_z)=\cL_{\f(z)}$. 

It only remains to show that $\f$ is an orientation preserving homeomorphism of $S^1$. But this is equivalent to showing that $\f$ preserves the cyclic order of the elements of $S^1$. So, suppose that $x,y,z$ are in cyclic order: $x<y<z<x+2\pi$. Then the unique objects $Z=E(x,y)\in \cL_x\cap\cL_y$, $X=E(y,z)\in \cL_y\cap \cL_z$ and $Y=E(x,z)\in\cL_x\cap \cL_z$ map to each other in the reverse cyclic order: $Z\to Y\to X\to Z$ and there are no nonzero morphisms $X\to Y,Y\to Z$ or $Z\to X$. Applying $\Phi$ we obtain nonzero maps $\Phi Z\to\Phi Y\to \Phi X\to \Phi Z$ showing that $\f(x),\f(y),\f(z)$ are in correct cyclic order. So, $\f$ is an orientation preserving homeomorphism of $S^1$ as claimed.
\end{proof}

The above proposition implies that we have a group homomorphism from $Aut(\cC)$, the group of strictly additive strictly triangular automorphisms of $\cC$ to the group $Homeo_+(S^1)$ of all orientation preserving homeomorphism $\f$ of $S^1$. This homomorphism is surjective since, for any $\f\in Homeo_+(S^1)$, there is a strictly additive, strictly triangular automorphism $\Phi_\f$ of $\cC$ given on indecomposable object by $\Phi E(x,y)=(\f(x),\f(y))$ and on morphisms by sending basic morphisms to basic morphisms and extending linearly. This proves the following.

\begin{cor}
There is a split surjective group homomorphism
\[
	Aut(\cC)\to Homeo_+(S^1)
\]
whose kernel consists of all automorphisms of $\cC$ which send each object to an isomorphic object.
\end{cor}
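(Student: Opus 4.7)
The plan is to read off the corollary from the preceding proposition together with the explicit construction of $\Phi_\f$ given in the paragraph before the corollary. First I would define the candidate map
\[
\Psi: Aut(\cC)\to Homeo_+(S^1),\qquad \Psi(\Phi)=\f,
\]
where $\f$ is the (unique) orientation preserving homeomorphism produced by the preceding proposition. Uniqueness of $\f$ makes $\Psi$ a well-defined function. To check $\Psi$ is a group homomorphism, observe that if $\Psi(\Phi_i)=\f_i$ for $i=1,2$, then for every indecomposable $E(x,y)$ one has
\[
\Phi_1\Phi_2\, E(x,y)\cong \Phi_1 E(\f_2(x),\f_2(y))\cong E(\f_1\f_2(x),\f_1\f_2(y)),
\]
so by the uniqueness clause in the previous proposition, $\Psi(\Phi_1\Phi_2)=\f_1\f_2$.

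Next I would verify surjectivity and exhibit a splitting. Given $\f\in Homeo_+(S^1)$, I would define $\Phi_\f$ on indecomposable objects by $\Phi_\f E(x,y)=E(\f(x),\f(y))$ and on basic morphisms by sending basic morphisms to basic morphisms, extended $\kk$-linearly and strictly additively; this is permitted because the description of morphisms in $\cC_{r,s}$ from Section 2 shows that the existence of a basic morphism $E(a,b)\to E(x,y)$ is determined solely by the relative cyclic positions of $a,b,x,y$ on $S^1$, and an orientation preserving homeomorphism preserves these positions. The same observation shows composition of basic morphisms is preserved. To see that $\Phi_\f$ is strictly triangular, one checks that the distinguished basic positive and negative triangles from Section 1 are characterized in terms of cyclic order on $S^1$ alone, hence are preserved by $\Phi_\f$; all distinguished triangles being isomorphic to direct sums of these, strict compatibility with $\Sig$ and with distinguished triangles follows. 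Then $\Psi(\Phi_\f)=\f$ by construction, so $\Psi$ is split surjective with section $\f\mapsto \Phi_\f$.

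Finally, the kernel of $\Psi$ consists of those $\Phi$ for which $\f=\mathrm{id}_{S^1}$, which by the preceding proposition is exactly the set of automorphisms sending every indecomposable $E(x,y)$ to an isomorphic object; since $\Phi$ is strictly additive, this is equivalent to $\Phi$ sending every object of $\cC$ to an isomorphic object, as claimed.

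I expect the main obstacle to be the verification that $\Phi_\f$ really is strictly triangular: one needs to know that the chosen distinguished triangles in $\cC=\cC_\pi$ are detected purely by cyclic combinatorics on $S^1$, so that an orientation preserving self-homeomorphism of $S^1$ carries them to distinguished triangles of the same form with the correct signs. All other steps are formal consequences of the preceding proposition and the definition of $\Phi_\f$.
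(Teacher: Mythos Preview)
Your proposal is correct and follows essentially the same approach as the paper. The paper's argument is contained in the paragraph immediately preceding the corollary: it asserts that the preceding proposition gives the homomorphism, and that the explicit construction $\Phi_\f$ (sending $E(x,y)\mapsto E(\f(x),\f(y))$ and basic morphisms to basic morphisms) provides the splitting; you have simply unpacked these assertions with more care, in particular spelling out why $\Psi$ is multiplicative and flagging the verification that $\Phi_\f$ is strictly triangular, which the paper states without further comment.
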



\subsection{Equivalence of clusters}

In this subsection we will prove the following theorem which we interpret to mean that all clusters are equivalent and all objects in all clusters are equivalent.

\begin{thm}[Equivalence of clusters]\label{thm: equivalence of clusters}
For any two clusters $\cT_1,\cT_2$ in $\cC_\pi$ and any two objects $T_1\in\cT_1$, $T_2\in\cT_2$ there is a $\f\in Homeo_+(S^1)$ so that $\Phi_\f(\cT_1)\cong\cT_2$ and $\Phi_\f(T_1)=T_2$.
\end{thm}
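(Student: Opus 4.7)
The plan is to use the geometric interpretation (Proposition \ref{prop: interpretation of compatible in h}) under which each cluster $\cT_i$ corresponds to a discrete pairwise-disjoint family of geodesics in the hyperbolic plane $\mathfrak{h}^2$. First I would verify that each cluster is in fact an ideal triangulation of $\mathfrak{h}^2$: because the family is discrete, only finitely many geodesics meet any compact subset of $\mathfrak{h}^2$, so each component of the complement has finitely many boundary geodesics; maximality then forces each component to be an ideal triangle (a 4-sided region could be split by a diagonal, contradicting maximality). Letting $V_i \subseteq S^1$ denote the set of endpoints of the geodesics of $\cT_i$, a similar maximality argument shows $V_i$ is dense: if an arc $(a,b) \subset S^1$ contained no point of $V_i$, then every $E(c,d)$ with $c,d \in (a,b)$ would be compatible with $\cT_i$, producing an uncountable family of pairwise compatible geodesics that cannot be added to $\cT_i$ while keeping it discrete.

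The main step is to build a cyclic-order-preserving bijection $\f: V_1 \to V_2$ sending the endpoints of $T_1$ to those of $T_2$. For this, consider the dual graph $G_i$ of the ideal triangulation, whose vertices are the triangles and whose edges are the shared sides. Then $G_i$ is an infinite \emph{trivalent tree}: trivalent because each ideal triangle has three sides; locally finite and connected because any two triangles can be joined by a compact path in $\mathfrak{h}^2$ crossing only finitely many geodesics; and acyclic because $\mathfrak{h}^2$ is simply connected. Moreover $G_i$ carries a planar structure coming from the counterclockwise cyclic order of the three sides at each triangle. Any two infinite planar trivalent trees with a distinguished edge are isomorphic as planar graphs, via a breadth-first matching that begins with the distinguished edges (corresponding to $T_1$ and $T_2$) and their two adjacent triangles with correctly matched third vertices, and extends one triangle at a time across adjacent edges. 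Such a planar tree isomorphism induces the desired bijection $\f$ on vertex sets, and the planarity ensures that $\f$ preserves cyclic order.

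To finish, I would invoke the standard fact that a cyclic-order-preserving bijection between dense subsets of $S^1$ extends uniquely to an orientation-preserving homeomorphism $\f \in \mathrm{Homeo}_+(S^1)$. By construction $\f$ sends the endpoints of each geodesic in $\cT_1$ to those of a geodesic in $\cT_2$, so the automorphism $\Phi_\f$ constructed in the previous subsection satisfies $\Phi_\f(\cT_1) = \cT_2$ and $\Phi_\f(T_1) = T_2$, as required. The main obstacle in this outline is the careful construction of the planar tree isomorphism: one must verify that the breadth-first induction terminates in the sense of reaching every triangle of the triangulation (using connectedness of $G_i$) and that the local cyclic-order matching at each triangle assembles into a globally cyclic-order-preserving bijection on $V_1$, after which the extension to $S^1$ is routine.
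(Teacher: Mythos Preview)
Your approach is sound and genuinely different from the paper's. The paper works directly in the Moebius band: starting from $T_0=E(x_0,x_1)\in\cT$, it repeatedly locates the object of $\cT$ in a triangular region closest to the region's vertex, proves that this object shares an endpoint with the vertex (Claim~1), and thereby bisects the region; iterating produces an explicit definition of $\f$ on the dyadic points $Q_n$, after which a compactness argument (only finitely many objects of $\cT$ lie at distance $\ge\e$ from $\partial\cM$) shows both that every object of $\cT$ is eventually captured (Claim~4) and that the uniform-continuity condition of Lemma~\ref{technical lemma for uniform continuity} holds. Your route via the dual trivalent planar tree is more structural: it explains in one stroke why all clusters are combinatorially identical (the tree is the Cayley graph of $\ZZ/3\ast\ZZ/3$, which the paper derives separately in Section~\ref{sec5} \emph{using} the present theorem), and because your $\f:V_1\to V_2$ is a bijection onto a dense set you can invoke the simpler extension principle for order-preserving bijections of dense subsets rather than the paper's uniform-continuity lemma. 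Two steps deserve more care than you give them. First, that complementary regions have finitely many sides does not follow from local finiteness on compacta alone (the regions are unbounded); it follows from discreteness together with maximality, since an infinite-sided ideal polygon admits a diagonal compatible with everything, and adding one point to a closed discrete set preserves discreteness. Second, passing from the planar tree isomorphism to a cyclic-order-preserving bijection on $V_1$ requires identifying each ideal vertex with a maximal ``always-turn-the-same-way'' path in the tree and checking that the planar isomorphism respects the cyclic order on such paths; you flag this as the main obstacle, and it is, but it is routine once stated precisely.
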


Equivalently, we will show that, for any object $T_0$ of any cluster $\cT$ there is an automorphism $\Phi$ of $\cC$ which sends the standard cluster $\cT_0$ to $\cT$ and sends the object $E(0,\pi)$ to $T_0$. The automorphism will be $\Phi=\Phi_\f$ where $\f\in Homeo_+(S^1)$ will be given only on rational points of $S^1$ using the following lemma. Another lemma will show that the technical condition is automatically satisfied.

\begin{lem}\label{technical lemma for uniform continuity} Let $Q\subseteq S^1$ be the set of all points of the form $a\pi/2^n$ where $a,n\in\ZZ$ and $n\ge0$. Let $\f:Q\to S^1$ be a cyclic order preserving monomorphism. Then $\f$ extends uniquely to an orientation preserving homeomorphism of $S^1$ if and only if it satisfies the following condition.
\[
	(\forall \e>0)(\exists n\in\NN)(\forall a\in\ZZ)|\f(a\pi/2^n)-\f((a-1)\pi/2^n)|<\e
\]
\end{lem}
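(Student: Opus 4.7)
The plan is to handle the two directions separately, with the forward direction (necessity) being a short compactness argument and the backward direction (sufficiency) being the real work.

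For necessity, suppose $\f$ extends to an orientation-preserving homeomorphism $\tilde\f$ of $S^1$. Since $S^1$ is compact, $\tilde\f$ is uniformly continuous, so given $\e>0$ there is $\delta>0$ with $|p-q|<\delta \Rightarrow |\tilde\f(p)-\tilde\f(q)|<\e$. Choosing $n$ large enough that $\pi/2^n<\delta$ gives the stated condition immediately.

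For sufficiency, the plan has four stages. First, I would upgrade the pointwise spacing condition to uniform continuity of $\f$ on $Q$: given $\e>0$, pick $n$ from the hypothesis, and set $\delta=\pi/2^n$. If $p,q \in Q$ satisfy $|p-q|<\delta$, then $p,q$ lie either in one interval $[(a-1)\pi/2^n,a\pi/2^n]$ or in two cyclically adjacent such intervals. Since $\f$ preserves cyclic order on $Q$, the images $\f(p),\f(q)$ lie in the image-arc(s) determined by $\f((a-1)\pi/2^n),\f(a\pi/2^n),\f((a+1)\pi/2^n)$, and those arcs have length $<\e$ by hypothesis; hence $|\f(p)-\f(q)|<2\e$. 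Second, because $Q$ is dense in $S^1$ and $\f|_Q$ is now uniformly continuous, $\f$ extends uniquely to a continuous map $\tilde\f:S^1\to S^1$ by the standard completion argument (for $x \in S^1$, take dyadic $q_k\to x$, show $(\f(q_k))$ is Cauchy, define $\tilde\f(x)$ as the limit, and verify independence of the sequence).

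Third, I would show $\tilde\f$ preserves cyclic order: if $(x,y,z)$ are cyclically ordered on $S^1$, approximate by dyadic triples $(q_k,q_k',q_k'')$ cyclically ordered in $Q$, whose images are cyclically ordered, and pass to the limit (possibly with collapses, i.e.\ weakly). Fourth, I would prove $\tilde\f$ is bijective. Surjectivity follows because the image $\tilde\f(S^1)$ is compact (hence closed) and contains the set $\{\f(a\pi/2^n)\}$, which by the hypothesis is $\e$-dense for every $\e>0$, so $\f(Q)$ is dense and therefore $\tilde\f(S^1)=S^1$. Injectivity uses the weak cyclic order preservation together with density: if $\tilde\f(x)=\tilde\f(y)$ for $x\ne y$, pick distinct $q,q'\in Q$ in the two open arcs between $x$ and $y$; weak cyclic order preservation would force $\tilde\f$ to be constant on one of the arcs between $x$ and $y$, making $\tilde\f(q)=\tilde\f(q')$, contradicting the injectivity of $\f$ on $Q$. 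A continuous bijection from compact $S^1$ to Hausdorff $S^1$ is automatically a homeomorphism, and orientation-preservation follows from weak cyclic order preservation together with bijectivity. Uniqueness of the extension is clear from continuity and density of $Q$.

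The main obstacle I anticipate is the injectivity step, because weak cyclic order preservation by itself permits $\tilde\f$ to collapse arcs; ruling this out cleanly requires combining order preservation with the density of $\f(Q)$ to show no such collapse can occur without contradicting injectivity of $\f$ on $Q$. Everything else is a fairly standard "uniformly continuous map on a dense subset extends to a homeomorphism" argument, but adapted to the cyclic-order setting of $S^1$.
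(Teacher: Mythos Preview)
Your approach is essentially the same as the paper's: interpret the condition as uniform continuity, invoke the standard extension of a uniformly continuous map on a dense subset of a compact metric space, and then check that the extension is an orientation-preserving homeomorphism. The paper's proof is extremely terse (three sentences), simply asserting that the condition ``says that $\f$ is uniformly continuous,'' citing the extension theorem, and stating that the extension lies in $Homeo_+(S^1)$ ``since it is cyclic order preserving''; you are supplying the details the paper omits.

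One small slip in your injectivity sketch: with $q$ and $q'$ chosen in \emph{different} arcs between $x$ and $y$, weak cyclic order preservation only forces $\tilde\f$ to be constant on \emph{one} of the two arcs (think of the monotone lift $\Phi:\RR\to\RR$: if $\Phi(\tilde x)=\Phi(\tilde y)$ then $\Phi$ is constant on $[\tilde x,\tilde y]$, but it must then increase by $2\pi$ on the complementary interval). The contradiction therefore comes not from $\f(q)=\f(q')$, but from the fact that the collapsed arc, being open and nonempty, contains at least two points of the dense set $Q$, which would then have equal images under $\f$. With that correction your argument goes through.
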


\begin{proof}
In words, the condition says that $\f$ is uniformly continuous. Any continuous function on a compact metric space, such as $S^1$, is uniformly continuous and any uniformly continuous mapping from a dense subset, such as $Q$, to a complete metric space extends uniquely to a continuous function on the whole space. The extended map lies in $Homeo_+(S^1)$ since it is cyclic order preserving.
\end{proof}

The set $Q$ is a union of an increasing sequence of finite subsets $Q_0\subset Q_1\subset Q_2\subset\cdots$ where $Q_n$ consisting of all points of the form $a\pi/2^n$. We will construct the function $\f$ on $Q$ as a union of a sequence of cyclic order preserving mappings $\f_n:Q_n\to S^1$, with $\f_n=\f_{n+1}|Q_n$. We do not assume the technical condition in the above lemma.

Note that any object of the standard cluster $\cT_0$ has endpoints in some $Q_n$. In fact, the rational cluster category $\cX$ is a union of finite subcategories $\cX_n$ consisting of objects with both ends in $Q_n$ and any cyclic order preserving monomorphism $\f_n:Q_n\to S^1$ gives an embedding $\Phi_n:\cX_n\to \cC$. To see this, note that $\f_n$ extends to some $\overline\f_n\in Homeo_+(S^1)$ and the restriction to $\cX_n$ of the induced automorphism of $\cC$ does not depend on the choice of extensions. The condition $\f_n=\f_{n+1}|Q_n$ will imply that $\Phi_n=\Phi_{n+1}|\cX_n$. So, $\Phi_m(\cT_0\cap \cX_n)$ will be independent of $m\ge n$.

\begin{lem}\label{second lemma for equivalence of clusters}
Let $\f_n:Q_n\to S^1$ be as above. If $\cT=\bigcup_{n\ge0} \Phi_n(\cT_0\cap \cX_n)$ is a cluster then the union $\f=\cup \f_n$ satisfies the technical condition in Lemma \ref{technical lemma for uniform continuity} and therefore extends to some $\overline\f\in Homeo_+(S^1)$ so that $\Phi_{\ov\f}(\cT_0)=\cT$.
\end{lem}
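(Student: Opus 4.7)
The plan is to establish the uniform-continuity condition of Lemma \ref{technical lemma for uniform continuity} by contradiction, using the closedness and discreteness of the cluster $\cT$, and then to read off the identity $\Phi_{\ov\f}(\cT_0)=\cT$ from the compatibility $\Phi_n=\Phi_{n+1}|\cX_n$ built into the construction of the $\Phi_n$.

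First I would suppose that uniform continuity fails, so there exist $\e>0$ and, for every $n\ge0$, an integer $a_n$ with
\[
	|\f(a_n\pi/2^n)-\f((a_n-1)\pi/2^n)|\ge\e
\]
measured as arc distance on $S^1$. For each $n$ set
\[
	X_n := \Phi_n\bigl( E((a_n-1)\pi/2^n,a_n\pi/2^n)\bigr)=E\bigl(\f((a_n-1)\pi/2^n),\,\f(a_n\pi/2^n)\bigr).
\]
Since $E((a_n-1)\pi/2^n,a_n\pi/2^n)\in\cT_0\cap\cX_n$, each $X_n$ lies in $\cT$. The unordered preimage pairs $\{(a_n-1)\pi/2^n,\,a_n\pi/2^n\}\subset Q$ have pairwise distinct arc lengths $\pi/2^n$, so injectivity of $\f$ on $Q$ makes the $X_n$ pairwise distinct objects of $\cT$. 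Next I would run a compactness argument: the subset $\cM_\e\subset\cM$ of indecomposable objects whose endpoints lie at arc distance $\ge\e$ on $S^1$ is bounded away from the boundary of the open Moebius band, hence compact, and every $X_n$ lies in $\cM_\e$. Passing to a convergent subsequence $X_{n_k}\to X_\infty\in\cM_\e$, closedness of the lamination $\cT$ in $\cM$ forces $X_\infty\in\cT$, while discreteness of $\cT$ then forces $X_{n_k}=X_\infty$ for all large $k$, contradicting distinctness of the $X_n$.

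With uniform continuity in hand, Lemma \ref{technical lemma for uniform continuity} supplies a unique extension $\ov\f\in Homeo_+(S^1)$ of $\f$. Since $\ov\f|Q_n=\f_n$, the construction of $\Phi_n$ preceding this lemma shows that $\Phi_{\ov\f}|\cX_n=\Phi_n$ for every $n$, and taking the union over $n$ gives
\[
	\Phi_{\ov\f}(\cT_0)=\bigcup_{n\ge0}\Phi_{\ov\f}(\cT_0\cap\cX_n)=\bigcup_{n\ge0}\Phi_n(\cT_0\cap\cX_n)=\cT,
\]
as required. The main obstacle I anticipate is the middle step: verifying that a failure of uniform continuity genuinely produces a convergent sequence of \emph{distinct} cluster objects. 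The distinctness is forced by the strictly decreasing interval lengths $\pi/2^n$ together with the injectivity of $\f$ on $Q$, while the uniform lower bound $\e$ on endpoint separation keeps the sequence in a compact piece of $\cM$; once these two ingredients are in place, closedness and discreteness of $\cT$ deliver the contradiction immediately.
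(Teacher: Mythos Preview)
Your argument is correct and follows essentially the same route as the paper's proof: both exploit that the set $\cM_\e$ of objects whose endpoints have arc distance $\ge\e$ is compact, together with the discreteness of the cluster $\cT$. The paper phrases this directly (a discrete set meets a compact set in finitely many points, so those finitely many cluster objects already lie in some $\Phi_m(\cT_0\cap\cX_m)$, forcing all later $E((a-1)\pi/2^n,a\pi/2^n)$ into $U_\e$), while you package the same observation as a contradiction via a convergent subsequence; the content is identical.
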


\begin{proof}
For any $\e>0$, let $U_\e$ be the set of all points in the open Moebius band which are within $\e$ of the boundary. Then the complement of $U_\e$ is a compact set and therefore contains only a finite number of elements of the cluster $\cT$. This finite set is contained in $\Phi_m(\cT_0\cap \cX_m)$ for some $m$. But this implies that for any $n>m$, $\Phi_n(\cT_0\cap(\cX_n-\cX_m))\subset U_\e$. But $\cT_0\cap (\cX_n-\cX_m)$ contains all objects of the form $E((a-1)\pi/2^n,a\pi/2^n)$. The statement that $\Phi_n$ sends this object into $U_\e$ is exactly the statement that $\f_n$ satisfies the technical condition in Lemma \ref{technical lemma for uniform continuity}.
\end{proof}

\begin{proof}[Proof of Theorem \ref{thm: equivalence of clusters}]
Take any cluster $\cT$ and any object $T_0\in\cT$. Then $T_0=E(x_0,x_1)$ for some $x_0<x_1<x_0+2\pi$. Let $\f_0:Q_0=\{0,\pi\}\to S^1$ be given by 
$
 {\f_0(j\pi)=x_j}
$ for $j=0,1$. 
Then $\Phi_0$ sends the object $E(0,\pi)\in \cT_0\cap \cX_0$ to $T_0$.

The other objects of $\cT$ are, up to isomorphism, contained in the two sets $A,B$ where
\[
	A=\{E(y_0,y_1)\ |\ x_0\le y_0<y_1\le x_1\},\quad B=\{E(y_0,y_1))\ |\ x_1\le y_1<y_0+2\pi\le x_0+2\pi\}
\]
To avoid the words ``up to isomorphism'' in the rest of this proof we assume that $\cT\subseteq A\cup B'$ where $B'=\{E(y_1,y_0+2\pi)\}$. Let $Y=E(y_0,y_1)\in A$ be the point in $\cT\cap A$ closest to $T_0$ in the 1-norm. Thus $d=|y_0-x_0|+|y_1-x_1|$ is minimal nonzero.

\ul{Claim 1}. Either $y_0=x_0$ or $y_1=x_1$.

If not then $x_0< y_0<y_1< x_1$. Then the point $Z=E(x_0,y_1)\in A$ is compatible with every point in $B$ and with every point in $A$ which is compatible with $Y$ and which has a distance of at least $d$ from $T_0$. So, $Z$ is compatible with every point in $\cT$. But $Z$ cannot be in $\cT$ since the distance from $X$ to $Z$ is less than $d$. This contradicts the maximality of $\cT$ proving the claim.

By symmetry we assume that $x_0= y_0$ and $y_1< x_1$. Since $x_0<y_1<x_1$ we change the notation to $y_1=x_{1/2}$. Then the set of points in $A$ compatible with $Y=E(x_0,x_{1/2})$ consists of $T_0$ and the two sets
\begin{equation}\label{equation for Aj}
	A_j=\{E(a,b)\ |\ x_{(j-1)/2}\le a<b\le x_{j/2}\}
\end{equation}
for $j=1,2$. Therefore,
\begin{equation}\label{partition of A}
	\cT\cap A\subseteq A_1\cup A_2\cup \{T_0\}
\end{equation}
Note that the triangular regions $A_1,A_2,B'$ are uniquely determined by their vertices which are $E(x_0,x_{1/2}), E(x_{1/2},x_1), E(x_1,x_0+2\pi)= T_0'$, respectively.

\ul{Claim 2}. The vertex $E(x_{1/2},x_1)$ of $A_2$ lies in $\cT$.

This follows from the maximality of $\cT$ since every element of $A_1\cup A_2\cup B'\cup \{T_0\}$ is compatible with $E(x_{1/2},x_1)$. 

The following claim follows from the analogous argument applied to the triangular region $B'$. (Take the point in $B'\cap \cT$ which is closest to but not equal to its vertex $T_0'$, etc.) We use the notation $x_2=x_0+2\pi$ (which is equal to $x_0$ as an element of $S^1$).

\ul{Claim 3}. There exists $x_1<x_{3/2}<x_2$ so that $E(x_1,x_{3/2}),E(x_{3/2},x_2)\in\cT$ and 
\begin{equation}\label{partition of B'}
	\cT\cap B'\subseteq  A_3\cup A_4\cup \{T_0\}
\end{equation}
where $A_3, A_4$ are given by Equation (\ref{equation for Aj}).

%
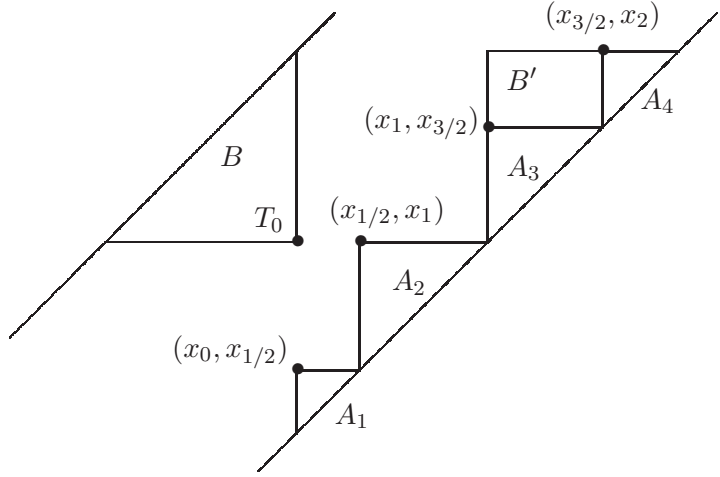
\begin{figure}[htbp]
\begin{center}
%
{
\setlength{\unitlength}{1in}
{\mbox{
\begin{picture}(4,2.6)
    \thinlines
    \put(1.5,1.33){
      \qbezier(-.2,-1.2)(1,0)(2.2,1.2)
      \qbezier(-1.5,-.5)(0,1)(.2,1.2) 
\qbezier(0,0)(0,.5)(0,1)
\qbezier(0,0)(-.5,0)(-1,0)
\qbezier(1,0)(1,.5)(1,1)
\qbezier(1,1)(1.5,1)(2,1)
     \thicklines
\qbezier(.33,0)(.66,0)(1,0)
\qbezier(.33,0)(.33,-.33)(.33,-.66)
\qbezier(0,-.67)(0,-.8)(0,-1)
\qbezier(0,-.67)(.1,-.67)(.33,-.67)
\put(1,1){\qbezier(.6,0)(.8,0)(1,0)
\qbezier(.6,0)(.6,-.33)(.6,-.4)
\qbezier(0,-.4)(0,-.8)(0,-1)
\qbezier(0,-.4)(.1,-.4)(.6,-.4)
}
\put(-.03,-.03){    \put(0,0){$\bullet$}
    \put(0,-.67){$\bullet$}
    \put(.33,0){$\bullet$}
    }
\put(.97,.97){    
    \put(0,-.4){$\bullet$}
    \put(.6,0){$\bullet$}
    }
    \put(-.4,.4){$B$}
    \put(-.22,.08){$T_0$}

    \put(.5,-.25){$A_2$}
    \put(.2,-.95){$A_1$}
    \put(-.65,-.6){($x_0,x_{1/2}$)}
    \put(.17,.13){($x_{1/2},x_1$)}
\put(1,1){        \put(.1,-.2){$B'$}
\put(.8,-.3){$A_4$}
    \put(.1,-.65){$A_3$}
    \put(-.65,-.4){($x_1,x_{3/2}$)}
    \put(.3,.15){($x_{3/2},x_2$)}
      }
      }
\end{picture}}
}}
\caption{$\cT\subseteq A_1\cup A_2\cup A_3\cup A_4\cup\{T_0\}\subseteq A\cup B'$. The points $E(x_j,x_{j+1/2})$ in the figure above must lie in $\cT$ since they are compatible with all points in $A_1\cup A_2\cup A_3\cup A_4\cup\{T_0\}$ and, therefore, with all objects in $\cT$.}
\label{Fig: proof of cluster equivalence}
\end{center}
\end{figure}

We can now construct the mapping
\[
	\f_1:Q_1=\{0,\pi/2,\pi,3\pi/2\}\to S^1
\]
by the formula $\f_1(j\pi)=x_j$ for $j=0,1/2,1,3/2$. Then $\Phi_1(\cT_0\cap \cX_1)$ consists of $T_0$ and the vertices of $A_1,A_2,A_3,A_4$. These lie in $\cT$ by the above claims and the remainder of $\cT$ lies in the union of the triangular regions $A_j$. To apply induction, we use the notation $A_j=A_j^1$ for each $j$.

Suppose that $n\ge 1$ and we have $\f_{n}:Q_{n}\to S^1$ given by $\f_n(j\pi)=x_j$ and we have a sequence of disjoint triangular regions $A_j^n$ so that the image of $\Phi_{n}$ contains the vertex $E(x_{(j-1)/2^n},x_{j/2^n})$ of $A_j^n$ for each $j$ and the remainder $\cT-\Phi_{n}(\cT_0\cap \cX_{n})$ lies in the union of the triangular regions $A_j^n$. Then, in each $A_j^n$, we take the point in $\cT\cap A_j^n$ which is closest to and not equal to the vertex of $A_j^n$. This gives a new point $z=x_{(2j-1)/2^{n+1}}$ in the open interval $(x,y)=(x_{(j-1)/2^n},x_{j/2^n})$ and two new triangular regions $A^{n+1}_{2j-1},A^{n+1}_{2j}$ uniquely determined by their vertices $E(x,z),E(z,y)$, respectively. We also have the following statement analogous to (\ref{partition of A}) and (\ref{partition of B'}) above.
\begin{equation}\label{partition of Anj}
	\cT\cap A_j^n\subseteq A^{n+1}_{2j-1}\cup A^{n+1}_{2j}\cup \{\text{vertex of $A_j^n$}\}
\end{equation}
If we do this for each $j$ we obtain the data that we need to construct $\f_{n+1}$ and we can construct $\f_n$ for all $n$ by induction.

We can use Lemma \ref{second lemma for equivalence of clusters} to finish the proof once we show that every element of $\cT$ is contained in $\Phi_n(\cT_0\cap \cX_n)$ for some $n$. So, let $Z\in\cT$. Let $d>0$ be the distance from $Z$ to the boundary of the Moebius band $\cM$. Let $d_0$ be the distance from $T_0$ to the boundary and let $\e=\min(d,d_0)$. As in the proof of Lemma \ref{second lemma for equivalence of clusters}, there are at most finitely many, say $n$, objects in $\cT$ which have a distance $\ge \e$ from the boundary and $Z, T_0$ are among those $n$ points. (So, $n\ge1$.)

\ul{Claim 4}. $Z$ is contained in $\Phi_{n-1}(\cT_0\cap \cX_{n-1})$.


The case $Z=T_0$ being trivial, we assume that $Z\noteq T_0$ and $n\ge2$. Then $Z\in A^1_j$ for some $j$. Suppose that $A^1_j\cap \cT$ has $m$ elements which are at least $\e$ away from the boundary of $\cM$. Then $n-1\ge m\ge1$ (since $Z$ is one of those $m$ elements but $T_0$ is not). So the claim follows from the following statement which we prove by induction on $m$.

\emph{If $Z\in A^k_j$ and $A^k_j$ contains $m\le n-k$ elements which are at least $\e$ away from the boundary of $\cM$ then $Z\in \Phi_{n-1}(\cT_0\cap \cX_{n-1})$.}

Let $V$ be the vertex of $A^k_j$. Then, since $\e\ge d_0$, $V$ is one of the $m$ points of $A^k_j\cap \cT$ which are at least $\e$ away from the boundary of $\cM$. If $m=1$ then $Z=V$ and $Z\in \Phi_k(\cT_0\cap \cX_k)\subseteq \Phi_{n-1}(\cT_0\cap \cX_{n-1})$ since $k\le n-m\le n-1$. So, suppose that $m>1$ and $Z\neq V$. By (\ref{partition of Anj}), $Z$ is contained in either $A^{k+1}_{2j}$ or $A^{k+1}_{2j-1}$. Suppose, e.g., that $Z\in A^{k+1}_{2j}$. Then $A^{k+1}_{2j}\cap \cT$ will have at most $m-1$ elements which are at least $\e$ away from the boundary of $\cM$ since it is missing $V$. But $m-1\le n-(k+1)$. Therefore, the induction statement holds for all $m$ and the Claim holds for all $n$.

Our theorem follows from Claim 4 by Lemma \ref{second lemma for equivalence of clusters}.
\end{proof}



\section{Mutation of clusters}\label{sec5}


In order to verify that the clusters in $\cC$ form a cluster structure, we need to construct the quiver with potential associated to any cluster. Since clusters are all equivalent, they have isomorphic quivers with potential. Since the objects of a cluster are all equivalent, this quiver has the same structure at every vertex. It resembles a Cayley graph. In fact it is isomorphic to the Cayley graph of the group $\ZZ/3\ast\ZZ/3$ (the free product of the cyclic group of order 3 with itself). (See \eqref{eq: Cayley} below.)

\subsection{The quiver of a cluster}

First we recall that the \emph{quiver} $Q_\cT$ of the category $\cT$ has one vertex $v_i$ for each object $T_i$ in $\cT$ and the number of arrows $v_i\to v_j$ is equal to the dimension of 
\[
	Irr(T_j,T_i)=\Hom(T_j,T_i)/rad(T_j,T_i)
\]
 where $rad(T_j,T_i)$ is the vector subspace of $\Hom(T_j,T_i)$ spanned by all morphisms which factor as $T_j\xrarrow f Z\xrarrow g T_i$ where $f,g$ are not isomorphisms. (This description uses the fact that all objects of $\cT$ are indecomposable. Also, this is the quiver of $\End_\cC(\cT)^{op}$.) 
 
 \begin{lem}
 Suppose that $f:X\to Y$ is a nonzero morphism between compatible nonisomorphic indecomposable objects of $\cC$. If we form a triangle
 \[
 	X\xrarrow f Y\xrarrow g Z\xrarrow h \Sig X
 \]
then $Z$ is indecomposable and compatible with both $X$ and $Y$.
 \end{lem}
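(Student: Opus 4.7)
The strategy is to use the compatibility hypothesis to pin down the relative configuration of $X$ and $Y$, and then to identify the triangle with one of the explicit basic triangles of Section 1.4.

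First, since $\cC(X,Y)$ is at most one-dimensional (Subsection 2.4, with $r=s=\pi$ and $w=0$), the morphism $f$ is a scalar multiple of the basic morphism, so the distinguished triangle is isomorphic to the one built from the basic morphism. Combining the $\Hom$-set description with the compatibility criterion of Proposition~\ref{prop: interpretation of compatible in M}, one checks that $X$ and $Y$ must share \emph{exactly one} endpoint on $S^1$. Choosing representatives in the relevant isomorphism classes, we land in one of two cases: Case A, $X = E(a,b)$ and $Y = E(a,y)$ with $a<b<y$; or Case B, $X = E(a,b)$ and $Y = E(x,b)$ with $a<x<b$.

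Next, translating via $E(p,q)=M(p,q-\pi)$, Case A is precisely the basic positive triangle of Equation~\eqref{eq: basic up-down-up triangle}, whose third term is $Z = M(b,y-\pi) = E(b,y)$. Case B is the basic negative triangle of Equation~\eqref{eq: basic down-up-down triangle}, whose third term is $Z = M(x,a+\pi) \cong E(a,x)$. In both cases $Z$ is visibly a single indecomposable object, establishing the first conclusion. For the compatibility statement, observe that in Case A the three geodesics $X$, $Y$, $Z$ have endpoint pairs $\{a,b\}$, $\{a,y\}$, $\{b,y\}$ on the circle at infinity (respectively $\{a,b\}$, $\{x,b\}$, $\{a,x\}$ in Case B), so any two of them share exactly one endpoint. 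By Proposition~\ref{prop: interpretation of compatible in h}, two geodesics sharing an endpoint do not cross in $\mathfrak h^2$, so $Z$ is compatible with both $X$ and $Y$.

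The main obstacle is ruling out the third configuration, namely the one underlying Equation~\eqref{eq: third distinguished triangle}, in which $Z$ would split as a direct sum of two indecomposables. There $X=M(x,y)$ and $Y=M(w,z)$ satisfy $x<w<y+\pi$ and $y<z<x+\pi$, so in $E$-coordinates the intervals $[x,y+\pi]$ and $[w,z+\pi]$ overlap without being nested and the corresponding geodesics cross; hence $X$ and $Y$ are \emph{not} compatible. Thus the compatibility hypothesis precisely excludes the case where $Z$ would be decomposable, which is the crux of the lemma.
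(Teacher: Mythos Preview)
Your proof is correct and follows essentially the same approach as the paper's: both arguments observe that compatibility together with a nonzero morphism forces $X$ and $Y$ to share one endpoint, identify the resulting triangle as a basic positive or negative triangle, and read off that $Z$ is the indecomposable object joining the two free ends (hence compatible with both). Your version is more explicit---spelling out the two cases, computing $Z$ in each, and explaining why the configuration of Equation~\eqref{eq: third distinguished triangle} is ruled out---whereas the paper compresses all of this into a few lines, but the underlying argument is the same.
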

 
 \begin{proof} Since $X,Y$ share one end, they each have one free end. Let $Z$ be the unique object with those two ends. Then $Z$ is compatible with $X,Y$ and we have a basic positive or negative triangle $X\to Y\to Z\to \Sig X$. Since the triangle is unique up to isomorphism, it is isomorphic to the given triangle.
 \end{proof}

\begin{prop}\label{prop: local structure of T}
For any object $T$ of any cluster $\cT$ there are nonisomorphic objects $A,B,C,D$ in $\cT$ satisfying the following.
\begin{enumerate}
\item $T$ is the source of exactly two irreducible maps $a:T\to A,f:T\to C$ and the target of exactly two irreducible maps $b:B\to T, g:D\to T$. 
\item There are irreducible morphisms $c:A\to \Sig B\cong B,h:C\to \Sig D\cong D$ so that
$
	B\xrarrow b T\xrarrow a A\xrarrow c \Sig B$ and $D\xrarrow g T\xrarrow f C\xrarrow h \Sig D
$
are triangles:
\[\xymatrixrowsep{10pt}\xymatrixcolsep{10pt}
\xymatrix{
B\ar[dr]_b && A\ar[ll]^c\\
	& T\ar[ur]_a\ar[dl]_f \\
	C\ar[rr]^h && D\ar[ul]_g
	}
\]
\item If $T\to T'$ is a nonzero morphism with $T'\in \cT$ not isomorphic to $A$ or $C$ then the third object $Z$ in the triangle $T\to T'\to Z\to \Sig T$ does not lie in $\cT$.
\end{enumerate}
\end{prop}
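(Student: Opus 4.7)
By Theorem~\ref{thm: equivalence of clusters}, it suffices to treat the case $\cT=\cT_0$ and $T=E(0,\pi)$: in this setting $T$ is a side of exactly two ideal triangles of the triangulation $\cT_0$, those with vertex sets $\{0,\pi/2,\pi\}$ and $\{0,\pi,3\pi/2\}$, whose remaining sides furnish the four candidates $B=E(0,\pi/2)$, $A=E(\pi/2,\pi)$, $C=E(0,3\pi/2)\cong E(3\pi/2,2\pi)$, and $D=E(\pi,3\pi/2)$, pairwise nonisomorphic in $\cT_0$.

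For (2), I would apply the basic positive triangle formula~\eqref{eq: basic up-down-up triangle}, $M(x,y)\to M(x,z)\to M(y+\pi,z)\to\Sig M(x,y)$, to each of the two ideal triangles. Taking $(x,y,z)=(0,-\pi/2,0)$ produces the distinguished triangle $B\to T\to A\to\Sig B$; taking $(x,y,z)=(0,0,\pi/2)$ produces $T\to C\to D\to\Sig T$, which after two rotations and the identification $\Sig X\cong X$ in $\cC_\pi$ (a consequence of the $\ul F_\pi$-identification in the orbit category) becomes $D\to T\to C\to\Sig D$. All six morphisms appearing are basic.

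For (1), I would first use the morphism description of Section~2.4 to classify the objects $T'\in\cT_0\setminus\{T\}$ with $\cC(T,T')\neq 0$. These form two sequences: the ``$A$-sequence'' $A,E(3\pi/4,\pi),E(7\pi/8,\pi),\ldots$ accumulating at $\pi$, and the ``$C$-sequence'' $C\cong E(0,3\pi/2),E(0,7\pi/4),E(0,15\pi/8),\ldots$ accumulating at $0$. For any $T''$ strictly beyond $A$ (resp.\ beyond $C$) in its sequence, the basic morphism $T\to T''$ is the composition of basic morphisms $T\to A\to T''$ (resp.\ through $C$), so $\cC(T,T'')\subseteq \mathrm{rad}(T,T'')$ and $Irr(T,T'')=0$. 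A parallel computation shows that the sources of $A$ in $\cT_0$ are $T,A,E(\pi/2,3\pi/4),\ldots$ together with $D,E(\pi,5\pi/4),\ldots$, and these meet the targets of $T$ above only in $\{T,A\}$; hence no $Z\in\cT_0\setminus\{T,A\}$ can factor $T\to A$ nontrivially, so the basic morphism survives as a generator of $Irr(T,A)\cong\kk$. The arguments for $T\to C$ and, by the evident duality, for the incoming maps $b:B\to T$ and $g:D\to T$ are identical.

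Finally, for (3), suppose $T'\in\cT_0$ with $\cC(T,T')\neq 0$ and $T'\not\cong A,C$. By the classification above $T'$ is a strict successor of $A$ or of $C$; take $T'=E((2^n-1)\pi/2^n,\pi)$ with $n\ge 2$. Applying~\eqref{eq: basic up-down-up triangle} with $M(x,z)=T$ and $M(y+\pi,z)=T'$, i.e.\ $(x,y,z)=(0,-\pi/2^n,0)$, identifies the completing triangle as $M(0,-\pi/2^n)\to T\to T'\to \Sig M(0,-\pi/2^n)$, giving $Z\cong E(0,\pi-\pi/2^n)$. The length $(2^n-1)\pi/2^n$ of this interval is not of the form $\pi/2^m$ for any $m\ge 0$ when $n\ge 2$ (the required equality $2^m(2^n-1)=2^n$ fails because $2^n-1$ is odd and $>1$), so $Z\notin\cT_0$; the $C$-successor case is symmetric. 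The hard part throughout will be the radical computation in the third paragraph: the irreducibility of $a:T\to A$ reduces to verifying that the two dyadic sequences in $\cT_0$ (targets of $T$, sources of $A$) meet only at $T$ and $A$, so that no intermediate cluster object is available for a nontrivial factorization.
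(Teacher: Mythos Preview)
Your argument is correct and follows the paper's approach: reduce to $\cT_0$ and $T=E(0,\pi)$ via Theorem~\ref{thm: equivalence of clusters}, then compute explicitly. Your labeling $(A,B,C,D)$ happens to be the paper's $(C,D,A,B)$, which is harmless since the statement is symmetric under this swap. A few remarks on where you diverge from the paper and one point to tighten.

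For (1), the paper packages your ``$A$-sequence'' and ``$C$-sequence'' by observing that every object of $\cT_0$ with a nonzero morphism to or from $T_0$ lies in one of the linear subcategories $\cL_0,\cL_\pi$, and that within each $\cL_z$ all such morphisms factor uniquely through the object of $\cT_0\cap\cL_z$ closest to $T_0$. This is the same content as your explicit listing and intersection check, but it makes the ``hard part'' you flagged (the radical computation) immediate: the intermediate factorizations in $\cT_0$ can only occur inside $\cL_0$ or $\cL_\pi$, and there the total order settles it.

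For (2), you construct the triangles via \eqref{eq: basic up-down-up triangle} and rotation, whereas the paper invokes the preceding lemma (cone of a nonzero map between compatible indecomposables is the object with the two free ends). Both are fine. However, the statement asks that $c:A\to\Sig B$ and $h:C\to\Sig D$ be \emph{irreducible} in $\cT$, and ``basic'' does not imply this; you should say a word. The paper checks it directly (any intermediate object in the relevant $\cL_{\pi/2}$ or $\cL_{3\pi/2}$ crosses $T_0$, hence is absent from $\cT_0$); alternatively, once (1) is established for every vertex of $\cT_0$ via the equivalence theorem, irreducibility of $c,h$ follows from (1) applied at the vertices $A$ and $C$.

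For (3), you keep $T=T_0$ fixed and let $T'$ range over successors, while the paper applies Theorem~\ref{thm: equivalence of clusters} a second time to set $T'=T_0$ and $T=E(0,\pi/2^n)$; your route avoids the second invocation and arrives at the same arithmetic obstruction $2^m(2^n-1)=2^n$.
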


\begin{proof}
By the Equivalence of clusters (Theorem \ref{thm: equivalence of clusters}) we may assume that $\cT=\cT_0$ and $T=T_0=E(0,\pi)$. Then all nonzero morphisms between $T_0$ and other objects of $\cT_0$ lie in the linear subcategories $\cP_0,\cP_\pi$. All of the morphisms in $\cP_0$ starting and ending at $T_0$ factor uniquely through the objects in $\cP_0$ which are closest to $T_0$ which are $A=E(0,3\pi/2), D=E(0,\pi/2)$. Similarly, all morphisms in $\cP_\pi$ starting and ending at $T_0$ factor uniquely through $C=E(\pi/2,\pi)$ and $B=E(-\pi/2,\pi)$, respectively. This proves (1).

Since $\Sig D=D'=E(\pi/2,2\pi)$ and $C$ have the same first coordinate they both lie in $\cP_{\pi/2}$. Since $\pi/2<\pi<2\pi$, there is a morphism $C\to \Sig D$. For any angle $\th$ between $\pi$ and $2\pi$, $E(\pi/2,\th)$ is not compatible with $T_0=E(0,\pi)$ since $0<\pi/2<\pi<\th<2\pi$. Therefore, the morphism $C\to \Sig D$ is irreducible. Since the two unshared ends of $C,\Sig D$ are the ends of $T_0$, there is a triangle $D\to T\to C\to \Sig D$. Since $\Hom(D,T)$ and $\Hom(T,C)$ are one dimensional, we may assume that the first two morphisms are $g$ and $f$. Similarly, there is an irreducible morphism $A\to \Sig B$ which generates a triangle $B\xrarrow b T\xrarrow a A\to \Sig B$. This proves (2).

To prove (3) suppose that $T\to T'$ is a nonzero morphism in $\cT$ which is not irreducible. By the Equivalence or clusters we may assume that $\cT=\cT_0$ and $T'=T_0=E(0,\pi)$. By symmetry we may assume $T=E(0,\pi/2^n)$ and $n\ge2$ since $T\to T'$ is not irreducible. But then the third term in the triangle $T\to T'\to Z\to \Sig T$ is $Z=E(\pi/2^n,\pi)$ which is not in $\cT_0$ since $\pi-\pi/2^n$ is not equal to $\pi/2^m$ for any $m$.
\end{proof}

\begin{cor}
The morphism $(b,g):B\oplus D\to T$ is a minimal right $add(\cT\backslash\{T\})$ approximation of $T$ and any morphism $T'\to T$ in $add\,\cT$ with $T'\in add(\cT\backslash\{T\})$ factors uniquely through $(b,g)$. Similarly, $\binom af:T\to A\oplus C$ is a  minimal left $add(\cT\backslash\{T\})$ approximation of $T$. \qed
\end{cor}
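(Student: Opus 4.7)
The plan is to reduce to the standard configuration using the Equivalence of Clusters (Theorem~\ref{thm: equivalence of clusters}) and then verify the universal property by the explicit hom calculus of Section~2. Since the statement of the corollary is invariant under strictly additive triangulated automorphisms of $\cC$, and any pair $(\cT,T)$ is carried to $(\cT_0,T_0=E(0,\pi))$ by some $\Phi_\f$, we may assume $\cT=\cT_0$ and $T=T_0$. Under this normalization $B=E(-\pi/2,\pi)$, $D=E(0,\pi/2)$, and $b,g$ are the (unique up to scalar) basic morphisms $B\to T_0$, $D\to T_0$. By additivity, it suffices to establish unique factorization for indecomposable $T'\in\cT_0\setminus\{T_0\}$.

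The central step is a case-check. Using the formula for $\cC_{r,s}(E(a,b),E(x,y))$ from Section~2 with $r=s=\pi$ (hence $w=0$), and applying it to the dyadic intervals that constitute $\cT_0$, the indecomposables $T'\in\cT_0\setminus\{T_0\}$ with $\cC(T',T_0)\neq 0$ split into exactly two families:
\[
T'=E(0,\pi/2^n),\qquad T'=E(\pi,\pi+\pi/2^n),\qquad n\ge 1.
\]
In each case $\cC(T',T_0)\cong\kk$, generated by a basic morphism. For $T'=E(0,\pi/2^n)$ one checks directly that $\cC(T',B)=0$ while $\cC(T',D)\cong\kk$ is spanned by the basic morphism, and the composition $T'\to D\xrightarrow{g} T_0$ of basic morphisms is again basic, hence equals the basic morphism $T'\to T_0$ up to the chosen scalars. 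The second family factors symmetrically through $b$. Uniqueness of the lift $T'\to B\oplus D$ is forced by the vanishing of one summand of $\cC(T',B\oplus D)$ together with the one-dimensionality of the other.

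For right minimality of $(b,g)$, the same hom calculus yields $\cC(B,D)=\cC(D,B)=0$ and $\End_\cC(B)=\End_\cC(D)=\kk$, so $\End_\cC(B\oplus D)$ is the diagonal $\kk\oplus\kk$. Any endomorphism $\phi=(\lambda_B,\lambda_D)$ with $(b,g)\circ\phi=(b,g)$ must satisfy $\lambda_B b=b$ and $\lambda_D g=g$, hence $\lambda_B=\lambda_D=1$, so $\phi=1_{B\oplus D}$. The companion statement that $\binom{a}{f}:T\to A\oplus C$ is a minimal left $add(\cT\setminus\{T\})$-approximation is entirely symmetric: reducing again to $\cT_0, T_0$, one enumerates the indecomposables $T'\in\cT_0\setminus\{T_0\}$ with $\cC(T_0,T')\neq 0$ (again two dyadic families, now factoring through $A$ and $C$ by Proposition~\ref{prop: local structure of T}(1)), and repeats the minimality argument.

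The main obstacle is precisely the case-check: verifying completeness of the two families requires using both coordinate charts $E(x,y)\cong E(y,x+2\pi)$ to be sure no nonzero morphism to $T_0$ is missed, and checking that the relevant composition of basic morphisms really is basic (so that the factorization equals the given morphism, not merely a scalar multiple). Once this bookkeeping is done, each remaining assertion is immediate from one-dimensionality of the intervening hom spaces.
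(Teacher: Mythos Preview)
Your proof is correct and follows the same route the paper takes: the paper states this corollary with a bare \qed because the content is already in the proof of Proposition~\ref{prop: local structure of T}, where the reduction to $(\cT_0,T_0)$ via Theorem~\ref{thm: equivalence of clusters} and the observation that every nonzero morphism $T'\to T_0$ with $T'\in\cT_0$ lies in $\cL_0\cup\cL_\pi$ and hence factors uniquely through the nearest object ($D$ or $B$) are carried out explicitly. Your enumeration of the two dyadic families and the endomorphism check for minimality simply spell out what the paper leaves implicit.
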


\begin{cor}\label{cor: characterization of irreducible morphisms}
Suppose that $k:T_i\to T_j$ is a nonzero nonisomorphism in $\cT$. Then the following are equivalent. 
\begin{enumerate}
\item $k$ is irreducible.
\item The third term $Z$ in the triangle $T_i\to T_j\to Z\to \Sig T_i$ lies in $\cT$.
\item The objects between $T_i$ and $T_j$ in the unique linear subcategory of $\cC$ containing both $T_i$ and $T_j$ do not lie in $\cT$.\qed
\end{enumerate}
\end{cor}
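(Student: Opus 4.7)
The plan: since $T_i\not\cong T_j$ are compatible elements of $\cT$ admitting a nonzero morphism, combining the Hom-set description of $\cC_\pi$ with Proposition \ref{prop: interpretation of compatible in M} forces $T_i$ and $T_j$ to share exactly one endpoint on $S^1$, say $z$. Using Theorem \ref{thm: equivalence of clusters} to normalize coordinates, write $T_i=E(z,y)$ and $T_j=E(z,y')$ with $z<y<y'<z+2\pi$; the unique linear subcategory containing both is then $\cL_z$, and the ``objects between them'' are precisely the $E(z,w)$ with $y<w<y'$. The basic positive triangle \eqref{eq: basic up-down-up triangle} identifies the third term of the triangle $T_i\to T_j\to Z\to \Sig T_i$ as $Z=E(y,y')$.

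For $(1)\Leftrightarrow(3)$, observe that for any $y<w<y'$ the basic morphism $k$ factors as $T_i\to E(z,w)\to T_j$ since any composition of basic morphisms is basic; so if some such $E(z,w)$ lies in $\cT$ then $k$ is not irreducible in $\operatorname{add}(\cT)$. Conversely, if $k=g\circ f$ in $\operatorname{add}(\cT)$ with $f,g$ not isomorphisms, pass to an indecomposable summand $W\in\cT$ through which $k$ factors nontrivially. The requirement that $W$ admit nonzero morphisms from $T_i$ and to $T_j$ forces $W$ to share an endpoint with each; the Hom formula rules out $W$ sharing end $y$ or $y'$ (each such option yields a zero Hom in one of the two directions unless $W\cong T_i$ or $T_j$), leaving only $W=E(z,w)$ with $y<w<y'$.

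For $(2)\Leftrightarrow(3)$, I would read off from the cyclic order that the four endpoints $z,y,w,y'$ appear in this order on $S^1$, so by Proposition \ref{prop: interpretation of compatible in h} the geodesics $E(z,w)$ and $Z=E(y,y')$ cross; hence any such $W\in\cT$ precludes $Z\in\cT$. Conversely, if no such $W$ lies in $\cT$, it suffices by the maximality of $\cT$ as a compatible set to show $Z$ is compatible with every $T\in\cT$. Suppose instead $T=E(u,v)\in\cT$ crosses $Z$; then after relabeling, $u\in(y,y')$ and $v$ lies in the complementary arc $\{z\}\cup(y',z+2\pi)\cup(z,y)$. The subcase $v=z$ is excluded by hypothesis, while a cyclic-order check shows $v\in(y',z+2\pi)$ forces $T$ to cross $T_j=E(z,y')$ and $v\in(z,y)$ forces $T$ to cross $T_i=E(z,y)$; each contradicts $T,T_i,T_j\in\cT$.

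The main obstacle is the last direction of $(2)\Leftrightarrow(3)$: one must rule out every possible position of a would-be $T\in\cT$ crossing $Z$ by carving the complementary arc into three pieces and chasing each through the cyclic-order picture. The remaining inputs are routine consequences of the Hom formulas, the basic-triangle computation \eqref{eq: basic up-down-up triangle}, and the lamination/maximality properties of clusters developed in Sections \ref{sec3} and \ref{sec4}.
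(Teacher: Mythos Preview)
Your proof is correct. The paper itself offers no argument beyond the \qed\ mark, treating the corollary as an immediate consequence of Proposition~\ref{prop: local structure of T}: that proposition (together with its proof, which identifies $A,C,B,D$ as the objects of $\cT$ \emph{closest} to $T$ in the two linear subcategories through $T$) already says that the irreducible maps out of $T$ are exactly those to the nearest cluster objects in $\cL_z$ and $\cL_y$, which is $(1)\Leftrightarrow(3)$; and parts (2) and (3) of that proposition say precisely that the third term of the triangle lies in $\cT$ if and only if the target is one of those two nearest objects, which is $(1)\Leftrightarrow(2)$.

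Your route is different in that it is self-contained and does not reduce to the standard cluster. You work directly with the Hom formulas and the crossing criterion in general coordinates, and in particular your $(3)\Rightarrow(2)$ argument---showing via a cyclic-order case split that any $T\in\cT$ crossing $Z=E(y,y')$ must already cross $T_i$ or $T_j$ unless it is one of the forbidden intermediate objects $E(z,w)$---makes the geometric mechanism completely explicit, whereas in the paper this is absorbed into the maximality argument inside the proof of Proposition~\ref{prop: local structure of T}. One minor remark: your invocation of Theorem~\ref{thm: equivalence of clusters} is not actually needed for the normalization step; once you know $T_i$ and $T_j$ share an endpoint, the form $T_i=E(z,y)$, $T_j=E(z,y')$ with $z<y<y'<z+2\pi$ follows just from the isomorphism $E(a,b)\cong E(b,a+2\pi)$, without appealing to any automorphism of $\cC$.
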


\begin{cor}
In the quiver $Q_\cT$ of a cluster $\cT$, every arrow is contained in exactly one triangle. The composition of any two arrows in each triangle is zero. Each vertex lies on two triangles which are otherwise disjoint.\qed
\end{cor}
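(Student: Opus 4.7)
The plan is to derive all three assertions from Proposition \ref{prop: local structure of T}, Corollary \ref{cor: characterization of irreducible morphisms}, and the identification $\Sig X \cong X$ that holds for every indecomposable object of $\cC = \cC_\pi$.

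First I fix a vertex $T\in\cT$ and apply Proposition \ref{prop: local structure of T}. This yields pairwise non-isomorphic $A,B,C,D\in\cT$, the four irreducible morphisms $a:T\to A$, $f:T\to C$, $b:B\to T$, $g:D\to T$ exhausting the arrows at $T$, and the two distinguished triangles $B\to T\to A\to \Sig B\cong B$ and $D\to T\to C\to \Sig D\cong D$. Each of these triangles has all three morphisms irreducible (by part (2) of the Proposition), so each becomes an oriented 3-cycle in $Q_\cT$; call them $\Delta_1,\Delta_2$. Since $A,B,C,D$ are pairwise non-isomorphic and distinct from $T$, we have $\Delta_1\cap\Delta_2=\{T\}$ as subgraphs, establishing the ``otherwise disjoint'' clause. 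The vanishing of consecutive compositions in each $\Delta_i$ is then immediate from the triangulated axiom that successive morphisms in a distinguished triangle compose to zero.

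It remains to verify that $\Delta_1,\Delta_2$ are the only 3-cycles through $T$, so that each of the four arrows at $T$ lies in exactly one triangle and exactly two triangles meet at $T$. Any 3-cycle through $T$ uses one outgoing arrow from $\{a,f\}$ and one incoming arrow from $\{b,g\}$; the pairings $(a,b)$ and $(f,g)$ produce $\Delta_1,\Delta_2$, while the cross pairings $(a,g)$ and $(f,b)$ would demand arrows $A\to D$ and $C\to B$ in $Q_\cT$. By the Equivalence of clusters (Theorem \ref{thm: equivalence of clusters}) it suffices to rule these out for the standard cluster at $T_0=E(0,\pi)$, where $A=E(0,3\pi/2)$, $D=E(0,\pi/2)$, $B=E(-\pi/2,\pi)$, $C=E(\pi/2,\pi)$: applying the morphism description of Section 2.4 (with $r=s=\pi$, $w=0$) one checks that $\cC(A,D)=\cC(C,B)=0$, because neither admissibility condition on coordinates is satisfied, so no cross arrows can exist.

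Ruling out the cross 3-cycles is the only genuine obstacle; the rest of the proof is formal bookkeeping on the two triangles supplied by the local structure. A more abstract route to the same conclusion invokes uniqueness of distinguished triangles up to isomorphism: the triangle completing $a:T\to A$ is isomorphic to the one in Proposition \ref{prop: local structure of T}, forcing its third term to be $B$ rather than $D$, and similarly for $f:T\to C$. Either route completes the proof.
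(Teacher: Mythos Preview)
Your main argument is correct and is precisely the unpacking the paper has in mind when it writes \qed: everything is read off Proposition~\ref{prop: local structure of T}, and the only nontrivial point---ruling out the cross 3-cycles through $T$---is handled by your verification that $\cC(A,D)=\cC(C,B)=0$ in the standard cluster. (A slightly quicker way to see this: $A,D\in\cT$ are compatible and nonisomorphic, and the composition $D\xrightarrow{g}T\xrightarrow{a}A$ is a nonzero basic morphism in the linear subcategory $\cL_0$, so $\cC(D,A)\neq 0$ forces $\cC(A,D)=0$; similarly for $C,B$.)

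One caution about the ``more abstract route'' you sketch at the end: uniqueness of the distinguished triangle on $a:T\to A$ only tells you that the \emph{cone} of $a$ is $B$; it does not by itself preclude a separate irreducible morphism $A\to D$ unrelated to that triangle, and a 3-cycle in $Q_\cT$ is not a priori a distinguished triangle. So that alternative is incomplete as stated, but since your primary argument already closes the gap this does not affect the correctness of the proof.
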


To describe the quiver of a cluster, we use group theory. Let $G$ be the free product of two copies of $\ZZ/3$ with generators and relations:
\[
	G=\ZZ/3\ast\ZZ/3=\<a,b:a^3,b^3\>
\]
Recall that the \emph{Cayley graph} $C(G)$ of a group $G$ with $n$ generators, none of order 2, is a directed graph with one vertex for every element of $G$ and $2n$ edges at each vertex, $n$ pointing outward and labeled with the generators and $n$ pointing inward and labeled with the $n$ generators so that for any arrow labeled $a: x\to y$ in the graph the source and target are related by $y=xa$ in the group. This graph is the 2-skeleton of a 2-dimensional cell complex $K(G)$ in which, at each vertex we attach a 2-cell (a polygon) for every relation. $K(G)$ is always simply connected and has a free action of $G$ on the left. In this particular example, the 2-cells are triangles and $K(G)$ is contractible.

\begin{equation}\label{eq: Cayley}
\xymatrixrowsep{10pt}\xymatrixcolsep{10pt}
\xymatrix{
&&\cdots&\,\ar[ld]&&&&&\cdots&\,\ar[ld]\\
&&\bullet\ar[rdd]^b\ar[lu] &&&&&& \bullet \ar[lu]\ar[ddr]^a&&\\
\,\ar[dr]&&&&&&&&&&\,\\
\cdots&\bullet\ar[ruu]^b\ar[dl] &&\bullet\ar[ll]^b\ar[rrd]^a&&&& \bullet\ar[ruu]^a\ar[dd] &&\bullet\ar[ll]^a\ar[ru]&\cdots\\
\,&&&&&\bullet\ar[dll]^a\ar[urr]^b &&&&&\,\ar[ul]\\
\cdots\ar[r]&\bullet\ar[rr]^b\ar[ld]&&\bullet\ar[ldd]^b\ar[uu]^a&&&& \bullet\ar[llu]^b\ar[rr]^a &&\bullet\ar[ldd]^a\ar[r]&\cdots\\
\,&&&&&&&&&&\,\ar[lu]\\
&&\bullet\ar[luu]^b\ar[rd]&&&&&& \bullet \ar[uul]^a\ar[rd]&&\\
&\,\ar[ru]&\cdots&&&&&\,\ar[ru]&\cdots&&
	}
\end{equation}

\begin{thm}
The quiver $Q_\cT$ of any cluster $\cT$ is isomorphic to the Cayley graph $C(G)$ of the group $G=\ZZ/3\ast\ZZ/3$ with generators and relations given as above.
\end{thm}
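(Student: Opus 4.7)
The plan is to apply the Equivalence of Clusters (Theorem \ref{thm: equivalence of clusters}) to work with one fixed cluster $\cT$, then to build a 2-dimensional CW complex $Y$ from $Q_\cT$ by attaching a triangular 2-cell along each of the two 3-cycles through each vertex provided by Proposition \ref{prop: local structure of T}. The corollaries immediately following that proposition guarantee the construction is well-defined: each arrow of $Q_\cT$ lies in exactly one such 3-cycle, and each vertex lies on exactly two otherwise-disjoint 3-cycles. The vertex link of $Y$ is therefore the disjoint union of two arcs, matching the link at every vertex of the Cayley 2-complex $K(G)$ for the presentation $\langle a, b \mid a^3, b^3 \rangle$, so it will suffice to produce a based isomorphism of 2-complexes $Y \cong K(G)$.

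The strategy is to exhibit $Y$ as the universal cover of the presentation 2-complex $P(G)$ (one vertex, two oriented loops, two triangular 2-cells). I would first 2-color the triangular 2-cells of $Y$ so that the two 2-cells at every vertex receive different colors; together with the cyclic orientation of the arrows around each triangle and a labeling of arrows by $\{a,b\}$ determined by the color, this defines a cellular map $Y \to P(G)$ which is a local isomorphism. To produce the coloring, I would pass to the dual graph $Y'$ whose vertices are the 2-cells of $Y$ and whose edges are the objects of $\cT$ (each object bounds exactly two 2-cells); the desired 2-coloring exists precisely when $Y'$ is bipartite. By the geometric picture of Section \ref{sec3}, $\cT$ is an ideal triangulation of the Poincar\'e disk $\mathfrak{h}^2$, with objects corresponding to chords and the 2-cells of $Y$ to the ideal triangles, so $Y'$ is the dual graph of this triangulation.

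The main obstacle is showing that $Y'$ is a tree, which both yields the required bipartition and forces $Y$ to be simply connected. I would argue $Y'$ is a tree using the classical fact that the dual graph of a triangulation of a simply connected 2-manifold is acyclic: a simple cycle in $Y'$ would correspond to a nontrivial loop of triangles in $\mathfrak{h}^2$ enclosing a bounded region that must itself be tiled by ideal triangles disjoint from the cycle, contradicting the minimality of the triangulation inside the region. Once $Y'$ is a tree, $Y$ deformation retracts onto $Y'$ via the midpoint-triangle embedding of $Y$ in $\mathfrak{h}^2$, so $Y$ is contractible and in particular simply connected. With $Y$ simply connected, the local isomorphism $Y \to P(G)$ is the universal cover; hence $Y \cong K(G)$ and restriction to 1-skeletons gives $Q_\cT \cong C(G)$.
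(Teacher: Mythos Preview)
Your approach is correct but takes a genuinely different route from the paper's. The paper simply fixes $\cT=\cT_0$ and writes down an explicit bijection $\psi:G\to\cT_0$ using the normal form $g=a^{\e_1}b^{\e_2}\cdots$ of elements of $G=\ZZ/3\ast\ZZ/3$, sending $g$ to $T_0+\sum_j \frac{\pi}{2^{j+1}}[(\mp 1,\pm 1)-\e_j(1,1)]$ (the sign pattern depending on whether the word begins with $a$ or $b$); one then checks by hand that this is a bijection carrying Cayley-graph triangles to $Q_{\cT_0}$-triangles. Your argument, by contrast, is purely structural: you attach $2$-cells to the $3$-cycles to form $Y$, identify the dual graph $Y'$ with the dual of the ideal triangulation of $\mathfrak h^2$, show $Y'$ is a tree, conclude $Y$ is simply connected, and then recognize $Y$ as the universal cover of the presentation complex of $G$. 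This explains \emph{why} the Cayley graph appears (it is the unique simply connected complex with that local link), and it would transfer to any other cluster category whose local structure matches Proposition~\ref{prop: local structure of T}; the paper's argument is shorter but opaque, and its formula requires verification that the paper omits.

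One point to tighten: your justification that $Y'$ is acyclic (``a cycle encloses a bounded region contradicting minimality'') is a bit loose as stated, because the contradiction is not about minimality but about boundedness --- an ideal triangle always meets the circle at infinity, so none can sit inside the compact disk bounded by a Jordan curve built from a dual-graph cycle. Since you are already invoking Theorem~\ref{thm: equivalence of clusters} to work with $\cT_0$, a cleaner route is to observe directly that for $\cT_0$ each ideal triangle has a well-defined level $n\ge1$ (the largest $n$ so that all three sides lie in $\cX_n$), and that every triangle of level $n\ge2$ has exactly one side of level $n-1$, hence a unique parent in $Y'$; the two level-$1$ triangles share $E(0,\pi)$. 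This exhibits $Y'$ as the $3$-regular tree without any Jordan-curve argument. The deformation retraction $Y\simeq Y'$ then follows immediately (e.g.\ by the nerve lemma applied to the cover of $Y$ by closed $2$-cells, whose pairwise intersections are single vertices and whose triple intersections are empty).
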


\begin{proof}
Let $\cT=\cT_0$ then an isomorphism $\psi: C(G)\cong Q_{\cT_0}$ is given as follows. The vertices of $C(G)$ are labeled with elements of $G$ which are given by finite sequences
\[
	g=a^{\e_1}b^{\e_2}a^{\e_3}\cdots x^{\e_n}\quad\text{or}\quad g= b^{\e_1}a^{\e_2}b^{\e_3}\cdots y^{\e_n}
\]
where each $\e_i$ is 1 or $-1$ and $x=a$ or $b$ depending on the parity of $n$ and similarly for $y$. We map the identity $e\in G$ to $T_0=E(0,\pi)$ and other elements of $G$ to 
\[
	\psi(a^{\e_1}b^{\e_2}a^{\e_3}\cdots x^{\e_n})=T_0+\sum_{j=1}^n \frac{\pi}{2^{j+1}}[(-1,1)-\e_j(1,1)]
\]
\[
	\psi(b^{\e_1}a^{\e_2}b^{\e_3}\cdots y^{\e_n})=T_0+\sum_{j=1}^n \frac{\pi}{2^{j+1}}[(1,-1)-\e_j(1,1)]
\]
This gives a bijection between the elements of $G$ and the objects of $\cT_0$ and triangles are sent to triangles.
\end{proof}

Let $W$ be the potential on $Q_\cT$ given by the sum of all triangles: $W=\sum a_ib_ic_i$. Then the \emph{Jacobian category} $\cJ(Q_\cT,W)$ is given by the quiver $Q_\cT$ modulo the relations given by taking $\d_aW=0$ where $\d_aW$ is the cyclic derivative of $W$ with respect to the arrow $a$. In this case these relations are that the composition of any two arrows in any triangle is zero. It is clear that the nonzero paths in this quiver are the paths which have at most one arrow in every triangle. These are exactly the paths contained in the linear subcategories (but going the wrong way). This gives the following.

\begin{thm}
$\cJ(Q_\cT,W)\cong \cT^{op}$.
\end{thm}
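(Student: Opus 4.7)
The plan is to construct a $\kk$-linear functor $\Psi : \cJ(Q_\cT,W) \to \cT^{op}$ and show it is an isomorphism. On objects, set $\Psi(v_T) = T$. Each arrow $v_i \to v_j$ of $Q_\cT$ is determined by a choice of nonzero irreducible morphism $T_j \to T_i$ in $\cC$ representing a basis vector of the one-dimensional space $Irr(T_j,T_i)$; let $\Psi$ send this arrow to that morphism, now viewed as a morphism $T_i \to T_j$ in $\cT^{op}$. Extending multiplicatively gives a functor from the free path category of $Q_\cT$ to $\cT^{op}$.

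The first step is to verify that $\Psi$ descends through the Jacobian relations. The potential $W = \sum abc$ is a sum of $3$-cycles in $Q_\cT$ coming from the two triangles at each vertex in Proposition \ref{prop: local structure of T}, and each arrow lies in exactly one such cycle; the cyclic derivatives give the relations that each length-two subpath inside a triangular cycle vanishes. But every such triangular cycle in $Q_\cT$ is the image of a distinguished triangle $B \to T \to A \to \Sig B$ in $\cC$ (with $\Sig B \cong B$ in $\cC_\pi$), and consecutive morphisms in any distinguished triangle compose to zero. So $\Psi$ kills the Jacobian relations and is well-defined. Essential surjectivity is immediate. For fullness, a nonzero morphism $T \to T'$ in $\cT$ forces $T$ and $T'$ to share an endpoint by Proposition \ref{prop: interpretation of compatible in M}, so both lie in a common linear subcategory $\cL_z$; iterating Corollary \ref{cor: characterization of irreducible morphisms}, this morphism factors, up to a nonzero scalar, as a chain of irreducible morphisms through the intermediate cluster objects of $\cL_z$, and this chain is the image under $\Psi$ of the corresponding path.

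The heart of the argument is faithfulness, which reduces to a local analysis at each vertex of $Q_\cT$. By Proposition \ref{prop: local structure of T}, the four arrows incident to $v_T$ split into two linear pairs -- one incoming and one outgoing arrow in each of the two linear subcategories $\cL_{z_1}$ and $\cL_{z_2}$ meeting at $T$ -- while the two $3$-cycles of $W$ through $v_T$ each couple one arrow from $\cL_{z_1}$ with one arrow from $\cL_{z_2}$. Consequently the Jacobian relations kill precisely the two length-two transitions at $v_T$ that switch between $\cL_{z_1}$ and $\cL_{z_2}$, leaving only the two transitions that continue monotonically along a single $\cL_{z_i}$. By induction on length, every nonzero path in $\cJ(Q_\cT,W)$ is a monotone chain of irreducible morphisms inside some $\cL_z$, and such a chain is uniquely determined by its endpoints. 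Its image under $\Psi$ is the unique (up to nonzero scalar) basic morphism between those endpoints, which is nonzero in $\cC_\pi$ because within a single linear subcategory the intermediate cluster objects never hit the boundary values $y - x \in \{0, 2\pi\}$ where the projective-injective objects of $\cF_\pi$ live. The main obstacle will be establishing the precise local picture at each vertex -- in particular, confirming that no further length-two compositions vanish beyond those forced by $W$ -- which is exactly what the ``each vertex lies on exactly two triangles, otherwise disjoint'' structure recorded just before this theorem delivers.
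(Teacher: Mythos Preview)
Your proposal is correct and follows the same approach as the paper: identify the nonzero paths in $\cJ(Q_\cT,W)$ as exactly the monotone chains within a single linear subcategory $\cL_z$, and match these with the basic morphisms of $\cT^{op}$. The paper's own argument is a single sentence preceding the theorem (``the nonzero paths in this quiver are the paths which have at most one arrow in every triangle; these are exactly the paths contained in the linear subcategories''), so your version is considerably more detailed, but the underlying idea---that the Jacobian relations kill precisely the length-two compositions that switch linear subcategories at a vertex, leaving only the linear chains---is identical.
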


\subsection{Mutation} We show that the collection of all clusters in $\cC$ forms a cluster structure as defined in \cite{BIRSc} and \cite{BIRSm}.

\begin{prop}\label{prop: mutation by octahedral axiom}
(a) Given any object $T$ in any cluster $\cT$ in $\cC$ then, up to isomorphism, there is a unique object $T^\ast$ in $\cC$ not isomorphic to $T$ so that $\cT^\ast=\cT\backslash \{T\}\cup \{T^\ast\}$ is a cluster.

(b) This new object $T^\ast$ is given by the octahedral axiom for triangulated categories where the objects, morphisms and triangles on the left are as given in Proposition \ref{prop: local structure of T}.
\[\xymatrixrowsep{10pt}\xymatrixcolsep{10pt}
\xymatrix{
B\ar[dr]_b && A\ar[ll] &&&& B\ar[dd]_{fb} && A\ar[dl]\\
	& T\ar[ur]_a\ar[dl]_f &&&\longrightarrow&&& T^\ast\ar[ul]\ar[dr]\\
	C\ar[rr] && D\ar[ul]_g&&&& C\ar[ur]&&D\ar[uu]_{ag}
	}
\]

(c) In particular (as part of the octahedral axiom), we have triangles $T\to A\oplus C\to T^\ast\to \Sig T$ and $T^\ast \to B\oplus D\to T\to \Sig T^\ast$ and these triangles are $add(\cT\backslash\{T\})$ approximations of $T,T^\ast$ on the left and right.

(d) All morphisms in the right hand diagram above are irreducible morphisms in $\cT^\ast$.
\end{prop}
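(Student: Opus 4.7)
By the Equivalence of Clusters (Theorem \ref{thm: equivalence of clusters}), I will reduce to the case $\cT = \cT_0$ and $T = T_0 = E(0,\pi)$, so that the neighbors from Proposition \ref{prop: local structure of T} are $A = E(0,3\pi/2)$, $B = E(\pi,3\pi/2)$, $C = E(\pi/2,\pi)$, $D = E(0,\pi/2)$, and I will take $T^\ast = E(\pi/2, 3\pi/2)$, the ``other diagonal'' of the ideal quadrilateral with vertices $0, \pi/2, \pi, 3\pi/2$ on the circle at infinity. For part (a), I will verify that $\cT^\ast := (\cT_0 \setminus \{T_0\}) \cup \{T^\ast\}$ is a cluster: every object of $\cT_0 \setminus \{T_0\}$ has the form $E(m\pi/2^n,(m{+}1)\pi/2^n)$ with both endpoints confined to one of the arcs $[0,\pi/2], [\pi/2,\pi], [\pi,3\pi/2], [3\pi/2,2\pi]$, so its geodesic does not cross $T^\ast$ by Proposition \ref{prop: interpretation of compatible in h}, and discreteness is immediate. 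For maximality, any $Y$ compatible with $\cT^\ast$ is either compatible with $T_0$ (forcing $Y \in \cT_0 \setminus \{T_0\}$ by maximality of $\cT_0$ and the fact that $T_0$ crosses $T^\ast$) or not; in the second case, density of the dyadic endpoints of $\cT_0$ forces both endpoints of $Y$ into $\{0, \pi/2, \pi, 3\pi/2\}$, and a short case analysis leaves only $T^\ast$. Uniqueness in (a) follows by the same analysis.

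For part (b), I will apply the octahedral axiom to the composable morphisms $B \xrightarrow{b} T \xrightarrow{f} C$: the given triangles supply $\mathrm{cone}(b) = A$ and $\mathrm{cone}(f) = \Sigma D$, and a direct computation using the basic positive triangle formula \eqref{eq: basic up-down-up triangle}, after lifting to $\cD_\pi\double$, identifies $\mathrm{cone}(fb: B \to C) = M(\pi/2, \pi/2) = E(\pi/2, 3\pi/2) = T^\ast$. This produces the composition triangle $B \xrightarrow{fb} C \to T^\ast \to \Sigma B$, and the octahedral axiom then furnishes the fourth triangle $A \to T^\ast \to \Sigma D \to \Sigma A$. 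Using $\Sigma X \cong X$ for $X \in \{A, B, C, D\}$ in $\cC_\pi$, this reproduces the right-hand octahedral diagram displayed in the statement.

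For part (c), the first exchange triangle $T \to A \oplus C \to T^\ast \to \Sigma T$ will come from the nontrivial extension in $\cB_{\le\pi}$ realized by the exact sequence $M(0,0) \rightarrowtail M(\pi/2,0) \oplus M(0,\pi/2) \twoheadrightarrow M(\pi/2,\pi/2)$, which descends to a distinguished triangle in $\cD_\pi$. For the second exchange triangle $T^\ast \to B \oplus D \to T \to \Sigma T^\ast$, I will apply the octahedral axiom to the factorization $B \oplus D \xrightarrow{b \oplus g} T \oplus T \xrightarrow{(1,1)} T$ and rotate, noting that the cone of $(1,1): T \oplus T \to T$ is $\Sigma T$ and that the resulting octahedral cone agrees with the first exchange triangle's middle term $T^\ast$ up to a canonical identification in $\cC_\pi$. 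The approximation claims then follow immediately from the corollary after Proposition \ref{prop: local structure of T}, which identifies $(b,g): B \oplus D \to T$ and $\binom{a}{f}: T \to A \oplus C$ as the minimal right and left $add(\cT \setminus \{T\})$-approximations of $T$. For part (d), each of the six morphisms in the right-hand octahedral diagram generates a distinguished triangle whose third object lies in $\{A, B, C, D\} \subset \cT^\ast$, so by Corollary \ref{cor: characterization of irreducible morphisms} applied to the cluster $\cT^\ast$, each such morphism is irreducible.

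The hard part will be the explicit cone computation in part (b): verifying that $\mathrm{cone}(fb) = T^\ast$ requires careful tracking of parities in the doubled category $\cD_\pi\double$, the signs appearing in the basic positive and negative triangle formulas \eqref{eq: basic up-down-up triangle} and \eqref{eq: basic down-up-down triangle}, and the canonical isomorphisms $\Sigma X \cong X$ in $\cC_\pi$ that pin down the right-hand octahedral diagram on the nose.
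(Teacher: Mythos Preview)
Your proposal is correct and lands on the same $T^\ast=E(\pi/2,3\pi/2)$, but the paper's argument is considerably shorter because it exploits two structural shortcuts that you replace by direct computation.

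First, for existence in (a) the paper does not verify compatibility, discreteness, and maximality of $\cT_0^\ast$ by hand. It simply observes that $\cT_0^\ast=\Phi_\f(\cT_0)$ where $\f\in Homeo_+(S^1)$ is rotation by $\pi/2$; since $\Phi_\f$ is a triangulated automorphism of $\cC$, the image of a cluster is a cluster. Your geodesic/endpoint argument works, but the rotation trick makes all three conditions automatic in one line.

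Second, for uniqueness the paper avoids the endpoint case analysis entirely. It argues that once $T$ is removed, the composite $fb:B\to C$ becomes irreducible in any cluster containing $\cT\backslash\{T\}$: indeed every object of $\cL_\pi$ strictly between $B$ and $C$ is compatible with $T$, hence (by maximality of $\cT$) the only such object in $\cT$ is $T$ itself, and any replacement $Y\ncong T$ cannot lie there either. By Corollary~\ref{cor: characterization of irreducible morphisms}(2) the cone of an irreducible morphism lies in the cluster, so the cone $T^\ast$ of $fb$ is forced into the new cluster, giving $Y\cong T^\ast$. This is exactly your part~(b) cone computation, but used as the \emph{mechanism} for uniqueness rather than as a separate verification. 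Your case analysis (forcing the endpoints of $Y$ into $\{0,\pi/2,\pi,3\pi/2\}$ via compatibility with $A,B,C,D$) reaches the same conclusion but is longer.

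For (b)--(d) the paper just says ``the other statements follow from Proposition~\ref{prop: local structure of T}'', whereas you spell out the octahedral diagram, the exact sequence $M(0,0)\rightarrowtail M(\pi/2,0)\oplus M(0,\pi/2)\twoheadrightarrow M(\pi/2,\pi/2)$ for the first exchange triangle, and the irreducibility check via Corollary~\ref{cor: characterization of irreducible morphisms}. This is all correct; your derivation of the second exchange triangle via the octahedron on $B\oplus D\to T\oplus T\to T$ is more elaborate than needed (the exact sequence $M(-\pi/2,-\pi/2)\rightarrowtail M(-\pi/2,0)\oplus M(0,-\pi/2)\twoheadrightarrow M(0,0)$ gives it directly, with $M(-\pi/2,-\pi/2)\cong T^\ast$ in $\cC_\pi$), but it does work. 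One small omission: the approximation claim in (c) for $T^\ast$ (not just $T$) needs the corollary applied to $T^\ast$ inside $\cT^\ast$, which is legitimate since $\cT^\ast\backslash\{T^\ast\}=\cT\backslash\{T\}$ and the neighbors of $T^\ast$ in $\cT^\ast$ are again $\{A,B,C,D\}$ (as one sees from the rotation).

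What you flag as ``the hard part'' (the parity/sign bookkeeping for $\mathrm{cone}(fb)$) is actually routine once the basic negative triangle formula is in hand; the real leverage in the paper's proof comes from the rotation automorphism and the irreducibility-forces-cone argument.
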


\begin{proof} Take $\cT=\cT_0$ and $T=T_0=E(0,\pi)$. Let $T_0^\ast=E(\pi/2,3\pi/2)$. Then $\cT_0^\ast=\cT_0\backslash\{T_0\}\cup\{T_0^\ast\}$ is a cluster since it is equal to $\Phi_\f(\cT_0)$ where $\f:S^1\to S^1$ is rotation by $\pi/2$. The problem is to show that $T_0,T_0^\ast$ are the only objects which will complete $\cT_0\backslash\{T_0\}$ into a cluster.
 
To prove this we note that, when $T$ is deleted from $\cT$, the morphism $fb:B\to C$ becomes irreducible. This follows from Corollary \ref{cor: characterization of irreducible morphisms} ({characterization of irreducible morphisms}) since $T$ is the unique point between $B$ and $C$ in the linear subcategory containing $B$ and $C$ which is compatible with all objects in $\cT\backslash\{T\}$. The reason is that all of these points are also compatible with $T$, so they should already be in $\cT$. Therefore, if we replace $T$ with say $Y\ncong T$ then the morphism $B\to C$ becomes irreducible. By Corollary \ref{cor: characterization of irreducible morphisms}, this implies that the third terms $T_0^\ast$ in the triangle $B\to C\to T_0^\ast\to \Sig B$ must be in the cluster. And this implies that $Y\cong T_0^\ast$ as claimed. The other statements follow from Proposition \ref{prop: local structure of T}.
\end{proof}

\begin{cor}
Let $W^\ast$ be the potential for the quiver $Q_{\cT^\ast}$ of the mutation $\cT^\ast$ of $\cT$ at $T$. Then $(Q_{\cT^\ast},W^\ast)$ is obtained from $(Q_\cT,W)$ by Derksen-Weyman-Zelevinsky (DWZ) mutation of quivers with potential \cite{DWZ1}.
\end{cor}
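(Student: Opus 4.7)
The plan is to compute the DWZ mutation of $(Q_\cT, W)$ at the vertex corresponding to $T$ explicitly and to identify the reduced output with $(Q_{\cT^\ast}, W^\ast)$ using the octahedral picture of Proposition \ref{prop: mutation by octahedral axiom}. The key structural input is the preceding corollary: every arrow of $Q_\cT$ lies on exactly one triangle, and the two triangles at $T$ come from Proposition \ref{prop: local structure of T}, contributing the cyclic words $cab$ and $hfg$. Hence I may write $W = cab + hfg + W_{\text{rest}}$, where $W_{\text{rest}}$ is a sum of triangles disjoint from the six arrows $a,b,c,f,g,h$ at $T$.

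The first computational step is the DWZ pre-mutation: reverse the four arrows at $T$ to obtain $a^\ast\colon A\to T^\ast$, $b^\ast\colon T^\ast\to B$, $f^\ast\colon C\to T^\ast$, $g^\ast\colon T^\ast\to D$; adjoin the four composite arrows $[ab]\colon B\to A$, $[fb]\colon B\to C$, $[ag]\colon D\to A$, $[fg]\colon D\to C$, one for each length-$2$ path through $T$; and form the pre-mutated potential
\[
\tilde W \;=\; c[ab] + h[fg] + [ab]a^\ast b^\ast + [fb]f^\ast b^\ast + [ag]a^\ast g^\ast + [fg]f^\ast g^\ast + W_{\text{rest}}.
\]
The second computational step is reduction. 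Because each of $c$ and $h$ appears in exactly one cyclic summand of $\tilde W$, the (formal) changes of variable $c \mapsto c' - a^\ast b^\ast$ and $h \mapsto h' - f^\ast g^\ast$ are legitimate automorphisms of the completed path algebra, and they absorb the cross-terms $[ab]a^\ast b^\ast$ and $[fg]f^\ast g^\ast$ into the $2$-cycles $c'[ab]$ and $h'[fg]$ respectively, using cyclic equivalence $a^\ast b^\ast[ab]\equiv[ab]a^\ast b^\ast$. Thus $\tilde W$ splits as the direct sum of the trivial QP with potential $c'[ab] + h'[fg]$ and the reduced QP with potential
\[
W^{\text{red}} \;=\; [fb]f^\ast b^\ast + [ag]a^\ast g^\ast + W_{\text{rest}}.
\]

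Finally, I will match $(Q^{\text{red}}, W^{\text{red}})$ with $(Q_{\cT^\ast}, W^\ast)$. The octahedral diagram in Proposition \ref{prop: mutation by octahedral axiom}(b) displays the two triangles at $T^\ast$ in $Q_{\cT^\ast}$ as $B\xrightarrow{fb}C\to T^\ast\to B$ and $D\xrightarrow{ag}A\to T^\ast\to D$; under the obvious identifications $[fb]\leftrightarrow fb$, $[ag]\leftrightarrow ag$, and $a^\ast,b^\ast,f^\ast,g^\ast\leftrightarrow$ the corresponding irreducible maps of $Q_{\cT^\ast}$ from Proposition \ref{prop: mutation by octahedral axiom}(d), these two new triangles contribute precisely $[fb]f^\ast b^\ast + [ag]a^\ast g^\ast$ to $W^\ast$, while the outer triangles untouched by mutation contribute $W_{\text{rest}}$. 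The main obstacle is the cyclic-equivalence bookkeeping inside the reduction step and verifying that no \emph{unexpected} $2$-cycles arise during pre-mutation; both are controlled by the one-triangle-per-arrow property, which forces $c$ and $h$ to appear linearly in $\tilde W$ and guarantees that $W_{\text{rest}}$ does not interact with the mutated region.
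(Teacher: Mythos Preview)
Your proof is correct and carries out the explicit DWZ computation that the paper alludes to but does not actually perform. The paper's own argument takes a different route: it observes that the local picture at $T$ --- the two $3$-cycles through $T$ together with the four neighbouring vertices $A,B,C,D$ --- is exactly the quiver with potential arising from a cluster in the cluster category of type $A_5$, and then invokes \cite{DWZ1} and \cite{BIRSm} to conclude that categorical mutation there agrees with DWZ mutation of quivers with potential. Your direct computation is self-contained and makes the cancellation of the two $2$-cycles completely explicit, whereas the paper's approach is terser but leans on the cited external results. One minor bookkeeping slip: your cyclic words are not consistently oriented (for instance $cab$ parses as a cycle only when composed right-to-left, while $[ab]a^\ast b^\ast$ is not a cyclic rotation of the genuine cycle $b^\ast a^\ast[ab]$); fixing a single path-algebra convention throughout would clean this up, but it does not affect the substance of the argument.
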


\begin{proof}
We could carry out the calculation. Or we could simply quote known results. The part of the quiver $Q_\cT$ drawn in the above proposition is the quiver with potential for a cluster mutation between clusters in the cluster category of type $A_5$ and we know by \cite{DWZ1} and \cite{BIRSm} that this is given by DWZ mutation. Note that the definition of triangulation in $\cC$ and $\cC_{A_5}$ do not agree.
\end{proof}

This proposition and corollary prove the following.

\begin{thm}\label{C has cluster structure}
The triangulated category $\cC=\cC_\pi$ has a cluster structure in the sense of \cite{BIRSc} and \cite{BIRSm} with clusters defined in Section \ref{sec3} above.\qed
\end{thm}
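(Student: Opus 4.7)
The plan is to verify the axioms of a cluster structure in the sense of \cite{BIRSc, BIRSm} by directly invoking Proposition~\ref{prop: local structure of T}, Proposition~\ref{prop: mutation by octahedral axiom}, and the corollary immediately preceding the theorem. A cluster structure on a triangulated category requires, for every cluster $\cT$ and every $T \in \cT$: (a) existence of a unique indecomposable $T^\ast$ not isomorphic to $T$ such that $\cT^\ast := (\cT \setminus \{T\}) \cup \{T^\ast\}$ is again a cluster; (b) a pair of exchange triangles $T \to E \to T^\ast \to \Sig T$ and $T^\ast \to E' \to T \to \Sig T^\ast$ whose middle terms $E, E'$ lie in $add(\cT \setminus \{T\})$ and furnish the minimal right approximations of $T^\ast$ and $T$ respectively; (c) the quiver $Q_\cT$ has no loops and no $2$-cycles at $T$; (d) the quiver with potential $(Q_{\cT^\ast}, W^\ast)$ is the Derksen--Weyman--Zelevinsky mutation of $(Q_\cT, W)$ at the vertex of $T$.

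First I would observe that axioms (a), (b), and (d) are precisely the content of Proposition~\ref{prop: mutation by octahedral axiom}(a),(b),(c) together with the subsequent corollary, where in the local-structure notation of Proposition~\ref{prop: local structure of T} one takes $E = A \oplus C$ and $E' = B \oplus D$. The only remaining axiom is (c). This follows at once from the identification of $Q_\cT$ with the Cayley graph of $\ZZ/3 \ast \ZZ/3$ established just above the theorem: the generating elements have order $3$, which rules out loops, and the only relations are the triangular ones, which rule out $2$-cycles. Equivalently, the structural description that every arrow of $Q_\cT$ lies in exactly one triangle, with consecutive arrows in a triangle composing to zero, makes the absence of loops and $2$-cycles immediate by inspection.

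To organize the argument globally, I would appeal to the Equivalence of Clusters (Theorem~\ref{thm: equivalence of clusters}): the group $\mathrm{Aut}(\cC)$ acts transitively on pairs $(\cT, T)$, so it suffices to verify the axioms at a single such pair, and this is precisely the reduction performed in Proposition~\ref{prop: mutation by octahedral axiom} by taking $\cT = \cT_0$ and $T = E(0,\pi)$. The one conceptual caveat is that the clusters here are countably infinite rather than finite, whereas the original BIRSc framework was formulated in finite settings; however, the cluster-structure axioms are purely local at each mutable object, so they transfer verbatim without additional hypotheses. I do not foresee a genuine obstacle: the entire substance of the theorem lies in the preceding proposition and corollary, and the proof is essentially a bookkeeping statement that the axioms have been verified.
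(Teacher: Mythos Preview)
Your proposal is correct and follows essentially the same approach as the paper, which simply states ``This proposition and corollary prove the following'' and appends a \qed. You have merely spelled out in more detail which axioms of \cite{BIRSc,BIRSm} are covered by which preceding results, including the no-loops/no-2-cycles condition that the paper leaves implicit in the Cayley-graph description of $Q_\cT$.
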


\subsection{Rational cluster category}

Given a cluster $\cT$ we say that an indecomposable object $E(x,y)\in\cC$ is \emph{rational} with respect to $\cT$ if both $x$ and $y$ are ends of objects in $\cT$, i.e., if and only if $\cT$ has objects in $\cP_x$ and in $\cP_y$. An arbitrary element of $\cC$ is called rational with respect to $\cT$ if it is a direct sum of rational objects. Thus $\cX$ is the triangulated full subcategory of $\cC$ of all objects rational with respect to the standard cluster $\cT_0$.

\begin{thm}
For any cluster $\cT$, the union of all clusters obtainable from $\cT$ by a finite sequence of mutations is the set of all indecomposable objects which are rational with respect to $\cT$.
\end{thm}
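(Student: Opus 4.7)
The plan is to reduce via Theorem \ref{thm: equivalence of clusters} to the case $\cT=\cT_0$, so that the set of endpoints of objects of $\cT$ is the dyadic set $Q=\bigcup_N Q_N$ from Section \ref{sec4} and an indecomposable is rational iff both of its endpoints lie in $Q$. One direction is straightforward: if $\cT'=\cT\setminus\{T\}\cup\{T^\ast\}$ is obtained from $\cT$ by a single mutation, then by Proposition \ref{prop: local structure of T} each endpoint of $T$ is shared by at least two other objects of $\cT$, so removing $T$ does not shrink the set of endpoints; and by Proposition \ref{prop: mutation by octahedral axiom} the two endpoints of $T^\ast$ are exactly the unshared endpoints of the neighbours $B,C$ of $T$, hence belong to the endpoints of $\cT\setminus\{T\}$. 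Thus the endpoint set is invariant under mutation, and by induction every cluster reachable from $\cT_0$ consists of rational objects.

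For the converse, fix a rational indecomposable $X=E(x,y)$ with $x,y\in Q$ and pick $N$ large enough that $x,y\in Q_N$ and both $|y-x|$ and $2\pi-|y-x|$ exceed $\pi/2^N$. View $Q_N$ as the vertices of a regular $2^{N+1}$-gon inscribed in $S^1$; then $X$ is a diagonal of this polygon, and $\cT_0\cap\cX_N$ consists of the $2^{N+1}$ polygon sides together with a specific ``binary'' triangulation $\Delta_0$ made up of $2^{N+1}-3$ diagonals. The endgame is to reach $X$ by a finite sequence of polygon flips applied to $\Delta_0$, each lifted to a $\cC$-mutation.

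The heart of the argument is the following matching lemma, which I would prove by induction on the number of mutations: if $\cT$ is a cluster of $\cC$ with $\cT\setminus\cX_N=\cT_0\setminus\cX_N$ and with $\cT\cap\cX_N$ equal to the polygon sides together with a triangulation $\Delta$, then mutation of $\cT$ at any diagonal $T\in\Delta$ in $\cC$ produces a cluster of the same form in which $\Delta$ has been replaced by its flip at $T$. Two geometric observations, arising from the scale separation between $\cX_N$ and $\cT_0\setminus\cX_N$, drive the proof. First, every object of $\cT_0\setminus\cX_N$ is a geodesic whose two endpoints lie in a common arc of $S^1\setminus Q_N$ of length $\pi/2^N$, so it crosses no geodesic whose endpoints lie in $Q_N$; in particular it is automatically compatible with every object of $\cX_N$. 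Second, for a diagonal $T=E(x_0,x_1)\in\Delta$, the objects of $\cT$ in the linear subcategory $\cL_{x_0}$ closest to $T$ in the linear order on $\cL_{x_0}$ are the two edges of $\Delta$ adjacent to $T$ at the vertex $x_0$: the only other candidates in $\cT\cap\cL_{x_0}$ come from $\cT_0\setminus\cX_N$, whose second endpoints are confined to arcs of length $\pi/2^N$ abutting $x_0$, while $x_1$ and the adjacent $\Delta$-edges sit at distance at least $\pi/2^N$ from $x_0$ along the circle. With the neighbours $A,B,C,D$ of Proposition \ref{prop: local structure of T} identified as the four edges of the quadrilateral of $\Delta$ containing $T$, Proposition \ref{prop: mutation by octahedral axiom} forces $T^\ast$ to be the opposite diagonal of this quadrilateral; compatibility of $T^\ast$ with $\cT_0\setminus\cX_N$ then follows from the first observation.

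Granted the matching lemma, I conclude by invoking the classical fact that the exchange graph of triangulations of a polygon is connected and that every diagonal appears in some triangulation (equivalently, that every indecomposable of the cluster category of type $A_{2^{N+1}-3}$ belongs to some cluster and all clusters are mutation-equivalent). Lifting a finite sequence of flips connecting $\Delta_0$ to a triangulation containing the diagonal $X$, one obtains a finite sequence of mutations of $\cT_0$ reaching a cluster that contains $X$. The main obstacle in this program is the matching lemma — specifically, ruling out the possibility that a finer-scale object of $\cT_0\setminus\cX_N$ could appear as a nearer neighbour of $T$ than the diagonal or side of $\Delta$ at the vertex $x_0$, which is precisely what the scale-separation geometry prevents.
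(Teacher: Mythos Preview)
Your proposal is correct, but it takes a genuinely different route from the paper's own proof.

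For the forward direction, you give a careful argument that mutation preserves the set of endpoints; the paper simply says this direction is clear. Your argument is more informative.

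For the converse, the approaches diverge. You fix a level $N$ once and for all, identify $\cT_0\cap\cX_N$ with the sides plus a particular triangulation of the $2^{N+1}$-gon, and prove a matching lemma showing that $\cC$-mutations at diagonals coincide with polygon flips because the fine-scale objects of $\cT_0\setminus\cX_N$ sit at the extreme ends of each linear subcategory $\cL_{x_0}$ and so never interpose between a diagonal and its quadrilateral sides. You then invoke connectivity of the flip graph of polygon triangulations. This is a clean structural argument and it makes the embedding of the type $A_{2^{N+1}-3}$ cluster theory into $\cC$ quite explicit; the matching lemma is a statement of independent interest.

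The paper instead argues directly by a decreasing counter. Given a rational $T=E(x,y)$, it observes that only finitely many objects of the current cluster are incompatible with $T$. Among the incompatible ones of the form $E(c\pi/2^k,(c+1)\pi/2^k)$ it picks one with $k$ maximal; maximality forces the midpoint $r=(a+b)/2$ to equal $x$ or $y$. Mutating at $E(a,b)$ replaces it by an object having $r$ as an endpoint, hence compatible with $T$, so the incompatibility count drops. Iterating drives the count to zero, at which point maximality of the cluster forces $T$ to lie in it. This is shorter and avoids setting up the polygon framework, at the cost of being less explicit about the link with type~$A_n$.
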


\begin{proof}
It is clear that all objects in any iterated mutation of $\cT$ are rational. We need to prove the converse. We may assume that $\cT=\cT_0$. Take any rational objects, say $T=E(x,y)$. Then $x=a\pi/2^n,y=b\pi/2^m$. So, $T$ is compatible with all objects of $\cT_0$ of the form $E(c\pi/2^k,(c+1)\pi/2^k)$ for $k\ge n,m$. Therefore, there are only finitely many objects in $\cT_0$ which are not compatible with $T$. If this number is zero then $T\in\cT_0$. If the number is greater than zero, then there is some object $E(a,b)=E(c\pi/2^k,(c+1)\pi/2^k)$ which is not compatible with $T$. Choose $k$ to be maximal and let $c=(a+b)/2=(2c+1)\pi/2^{k+1}$. Then by maximality of $k$, $E(a,c)$ and $E(c,b)$ are compatible with $T$ which means that either $x=r$ or $y=r$. If we mutate $\cT_0$ at vertex $E(a,b)$ then we replace $E(a,b)$ with $E(c,d)$ for some $d$. Therefore, the new cluster $\cT_0$ has fewer objects which are not compatible with $T$. So, by repeating this process, we can eliminate all objects which cross $T$ and $T$ will be in the mutated cluster.
\end{proof}

\subsection{Clusters in $\cC_{r,s}$ for $r<s$}


\begin{thm}\label{cluster structures only exist for certain c} 
For any $0<r<s$ so that the ratio $\frac rs$ is not of the form $\frac rs=\frac{n+1}{n+3}$, there do not exist any weak cluster structures on the triangulated orbit category $\cC_{r,s}$. 
\end{thm}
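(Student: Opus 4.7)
The plan is to extract from the mutation structure a rigidity constraint that, in the presence of a positive projective-injective gap, forces the angular spacing to be a rational divisor of $2\pi$, and then to identify the divisor with $2/(n+3)$. Throughout, use the identification $\cC_{r,s} \cong \ul{\cF}_{c}$ with $c = \pi r/s$ and $\theta = \pi - c = \pi(s-r)/s > 0$. Indecomposables are then $E(x,y)$ with $\theta < y-x < 2\pi - \theta$, and the ``boundary'' objects $y-x \in \{\theta, 2\pi - \theta\}$ are projective-injective in $\cF_c$, hence zero in $\cC_{r,s}$. Assume a weak cluster structure exists; pick a cluster $\cT$ with mutable object $T$ and, after rotating by a suitable automorphism of $\cC_{r,s}$, normalize $T = E(0, \pi)$.

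Adapting Proposition \ref{prop: local structure of T} to the present setting, mutability of $T$ forces the existence of $A = E(0, \alpha)$, $B = E(\beta, \pi)$, $C = E(\gamma, \pi)$, $D = E(0, \delta)$ in $\cT$, realized as the $\cT$-nearest neighbors of $T$ in the four relevant linear subcategories, with irreducible morphisms $B \to T \to A$ and $D \to T \to C$ and exchange triangles $B \to T \to A \to \Sigma B$, $D \to T \to C \to \Sigma D$. By the octahedral axiom, exactly as in Proposition \ref{prop: mutation by octahedral axiom}, the exchange $T^*$ is the cone of $T \to A \oplus C$; the two-way approximation sequences of Section 2 yield an explicit formula for the endpoints of $T^*$ in terms of $\alpha, \gamma$ (and $\beta, \delta$ via the dual triangle) and the gap $\theta$.

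Self-consistency (namely $T^{**} \cong T$, together with the requirement that the new set $\cT^* = \cT \setminus \{T\} \cup \{T^*\}$ is again a cluster) applied to this formula becomes a closed-form equation whose only solutions place $\alpha, \beta, \gamma, \delta$ on the rotation orbit $\theta \ZZ$ (shifted by the endpoints of $T$) in $S^1$. Propagating this constraint across the Cayley-graph local structure from Theorem \ref{C has cluster structure} and \eqref{eq: Cayley}, one concludes that all endpoints of all objects of $\cT$ lie on a single rotation orbit of $\theta$. For this orbit to close up on $S^1$, one needs $\theta = 2\pi/N$ for some integer $N \ge 3$; writing $N = n+3$ gives
\[
\frac{r}{s} \;=\; 1 - \frac{\theta}{\pi} \;=\; \frac{n+1}{n+3},
\]
contradicting the hypothesis on $r/s$. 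In the excluded range $r/s = (n+1)/(n+3)$, Theorem \ref{F(Z)=C(An) is embedded in Cc} already provides the complementary construction.

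The main obstacle will be extracting true rigidity from the uniqueness of $T^*$: one must rule out the possibility that the approximation morphism $T \to A \oplus C$ admits non-isomorphic cones (violating mutability), or that the ``nearest neighbor'' positions $\alpha, \beta, \gamma, \delta$ can vary continuously while keeping compatibility with all of $\cT$ (which would violate maximality if one tries to enlarge $\cT$, or self-consistency of the exchange otherwise). In $\cC_\pi$ neither rigidity holds, since the cluster is a discrete lamination of the hyperbolic disk and all discrete laminations give clusters, so the argument must use in an essential way that $w = s-r > 0$ makes $\theta > 0$, turning the soft topological constraint into a hard arithmetic one.
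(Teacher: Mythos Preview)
Your proposal has a genuine gap at the decisive step, and several of the ingredients you invoke are not available in $\cC_{r,s}$ for $r<s$.

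First, the results you cite --- Proposition \ref{prop: local structure of T}, the Cayley-graph description, and the equivalence of clusters --- are all proved only for $\cC_\pi$. Their proofs rest on the action of $Homeo_+(S^1)$ by triangulated automorphisms, which is what allows one to normalize any cluster to $\cT_0$ and any object to $E(0,\pi)$. For $c<\pi$ the condition $\th<y-x<2\pi-\th$ is not preserved by arbitrary orientation-preserving homeomorphisms of $S^1$; only rotations act. So you cannot normalize $T=E(0,\pi)$, you cannot assume four nearest neighbors $A,B,C,D$ exist as in $\cC_\pi$, and you cannot invoke the Cayley-graph local picture. These must be re-derived from scratch from the weak cluster structure axioms alone.

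Second, and more seriously, the step you flag as ``the main obstacle'' is exactly where the argument is empty. The condition $T^{\ast\ast}\cong T$ is tautological in any weak cluster structure and yields no equation. You assert that self-consistency ``becomes a closed-form equation whose only solutions place $\a,\b,\g,\d$ on the rotation orbit $\th\ZZ$,'' but no such equation is produced, and nothing in your outline explains why $\th$ should appear.

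The paper's mechanism is different and concrete. Writing $T=E(a,b)$, the two exchange triangles force $T^\ast\cong E(a^+,b^+)\cong E(a^-,b^-)$ with $a<a^+$, $b<b^+$, $a^-<a$, $b^-<b$. These two representatives of the same isomorphism class must be related by the flip $E(x,y)\cong E(y,x+2\pi)$, giving $a^+=b^-$; combined with the projective-injective constraint this yields $a+\th\le a^+\le b-\th$. Now run a minimal-counterexample argument on the quantity $b-a$: if some object of $\cT$ has $b-a$ not an integer multiple of $\th$, take one with $b-a$ minimal. The middle terms $E(a^+,b)$ and $E(a,b^-)$ of the approximation triangle either lie on the boundary (giving $b-a^+=\th$) or lie in $\cT$ with strictly smaller gap, hence by minimality have gap a multiple of $\th$; adding, $b-a=(b-a^+)+(a^+-a)$ is a multiple of $\th$, contradiction. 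Finally $E(a,b)\cong E(b,a+2\pi)$ makes both $b-a$ and $2\pi-(b-a)$ multiples of $\th$, each $\ge 2\th$, so $2\pi=N\th$ with $N\ge4$, i.e.\ $N=n+3$.

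The rigidity comes not from $T^{\ast\ast}=T$ but from matching the \emph{two} exchange triangles for $T$ itself, together with the gap $\th>0$ forcing the middle terms either to vanish or to be genuine cluster objects of smaller size.
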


Before we prove this we examine mutations in any possible weak cluster structure on $\cC_{r,s}$ without coefficients when $r<s$. Recall that $\cC_{r,s}\cong \cC_c$ where $c=\frac{r\pi}s$.

Suppose that $\cC_c$ has a weak cluster structure, $\cT$ is a cluster in $\cC_c$ and $T\in\cT$. By assumption we have triangles $T\to B\to T^\ast\to \Sig T$ and $\Sig^{-1}T\to T^\ast\to B'\to T$ in $\cF_c$ in which $B,B'$ are the left and right $add(\cT\backslash T)$-approximations for $T$ and $\cT\backslash T\cup T^\ast$ is another cluster. We have an exact sequence in the Frobenius category $\cF_c$:
\[
	T\to B\oplus IT\to T^\ast\oplus J
\]
where $IT=I_1T\oplus I_2T$ is the injective envelope of $T$ in $\ccc_r$ and $J$ is a summand of $IT$.

If we split off the sequence $0\to J\to J$, we get another exact sequence:
\[
	T\to B\oplus IT/J\to T^\ast
\]
where the middle term must have exactly two components since, otherwise, $T,T^\ast$ would be compatible. If $T=E(a,b)$ then this exact sequence must have the following form where $T^\ast\cong E(a^+,b^+)$.
\begin{equation}\label{left approximation of T}
	E(a,b)\to E(a^+,b)\oplus E(a,b^+)\to E(a^+,b^+)
\end{equation}
Similarly, we have the dual approximation sequence
\begin{equation}\label{right approximation of T}
	E(a^-,b^-)\to E(a^-,b)\oplus E(a,b^-)\to E(a,b)
\end{equation}
where $E(a^-,b^-)\cong T^\ast\cong E(a^+,b^+)$. But $b-2\pi+\th\le a^-<a<a^+\le b-\th$ which means that $a^+,a^-$ are not the same point on the circle $S^1$. So, we have $a^-\equiv b^+$ and $a^+\equiv b^-$ modulo $2\pi$. But $a+\th\le b^-<b$. So, we must have:
\begin{equation}\label{equation from two approximations}
	a+\th\le b^-=a^+\le b-\th.
\end{equation}
The theorem now follows from the next lemma.

\begin{lem}\label{a-b is integer multiple of theta}
Suppose that $c<\pi$ and $\cC_c$ has a weak cluster structure without coefficients. Suppose that $\cT$ is a cluster in $\cC_c$ and $T\cong E(a,b)\in\cT$ then $b-a$ is an integer multiple of $\th=\pi-c$.
\end{lem}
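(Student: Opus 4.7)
The plan is to prove the lemma by strong induction on the length $L := b - a$ of the cluster object $T = E(a,b)$, leveraging the three cases naturally arising from equation \eqref{equation from two approximations}.

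First, I establish a lower bound that furnishes both the base case and well-foundedness of the induction: every cluster object $T = E(a,b) \in \cT$ must satisfy $L \ge 2\th$. This is because the mutability of $T$ together with \eqref{equation from two approximations} requires the interval $[a+\th, b-\th]$ to contain a valid $a^+$, and this interval is nonempty only when $L \ge 2\th$. The extremal case $L = 2\th$ (in which $a^+ = a+\th = b-\th$) is automatically in $\th\ZZ$ and serves as the base of the induction.

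For the inductive step, assume $L > 2\th$ and that the lemma holds for all cluster objects of length strictly less than $L$. The three possibilities for $a^+$ allowed by \eqref{equation from two approximations} are handled as follows. If $a^+ = a+\th$, then $E(a,a^+) = E(a,b^-)$ has length $\th$ and is projective-injective, while the other middle-term summand $E(a^+,b)$ has length $L - \th > \th$ and is not projective-injective; as a non-projective-injective summand of $B \oplus IT/J$, it must lie in $B \subseteq \cT$. The inductive hypothesis applied to $E(a^+,b)$ gives $L - \th \in \th\ZZ$, hence $L \in \th\ZZ$. The case $a^+ = b-\th$ is symmetric, yielding $E(a,a^+) \in \cT$ with length $L - \th$. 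In the remaining case $a+\th < a^+ < b-\th$, neither $E(a,a^+)$ nor $E(a^+,b)$ has length $\th$, so both are cluster objects (appearing as non-projective-injective summands of the right and left approximations $B'$ and $B$ respectively), with lengths $a^+ - a$ and $b - a^+$ both at least $2\th$ (by the preliminary bound applied to them) and strictly less than $L$. Induction applied to both pieces gives $L = (a^+ - a) + (b - a^+) \in \th\ZZ$.

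The main point requiring care is the well-foundedness of the descent: since cluster-object lengths are bounded below by $2\th$ and each inductive step produces cluster objects of length smaller by at least $\th$, the recursion terminates at the base case in finitely many steps. A secondary technical issue is the justification that any non-projective-injective summand of the middle term $B \oplus IT/J$ (and of its dual $B' \oplus I'T/J'$) necessarily lies in $B$ (resp.\ $B'$), and thus in $\cT \setminus \{T\}$; this is immediate from the fact that the only other summands of the middle term come from the injective envelope and are therefore projective-injective. Beyond the careful bookkeeping of these three cases, I do not anticipate any serious obstacle.
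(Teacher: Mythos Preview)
Your proposal is correct and takes essentially the same approach as the paper: both arguments descend via the approximation sequences \eqref{left approximation of T}--\eqref{right approximation of T} to shorter cluster objects, using that a non-projective-injective middle summand must lie in $\cT$, with termination guaranteed by the lower bound $L\ge 2\th$ from \eqref{equation from two approximations}. The paper phrases this as contradiction on a minimal $m$ with $m\th<b-a<(m+1)\th$ and proves separately that $b-a^+\in\th\ZZ$ and $b^--a\in\th\ZZ$ before summing, whereas you split on the position of $a^+$ in $[a+\th,b-\th]$; these are the same argument reorganized. One small wording issue: in your case $a^+=a+\th$ you call $E(a,a^+)$ and $E(a^+,b)$ ``the other middle-term summand'' as if they sit in the same sequence, but $E(a,b^-)$ comes from \eqref{right approximation of T} while $E(a^+,b)$ comes from \eqref{left approximation of T}---this does not affect the logic, since only $E(a^+,b)\in\cT$ is actually used there.
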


\begin{proof}[Proof of Theorem \ref{cluster structures only exist for certain c}]
Let $E(a,b)$ be an object of any cluster in $\cC_c$. Then $a+\th<b$. So, by the Lemma, we must have $b-a\ge 2\th$. Also, $E(a,b)\cong E(b,a+2\pi)$. Therefore, $a+2\pi-b$ is also an integer multiple of $\th$ and at least equal to $2\th$. Adding these we see that $2\pi=N\th$ where $N\ge4$. So, $N=n+3$ for some positive integer $n$. Then, $\th=2\pi/N=2\pi/(n+3)$ which implies that $c=\pi-\th=(n+1)\pi/(n+3)$ as claimed.
\end{proof}

\begin{proof}[Proof of Lemma]
Suppose not.
Let $m$ be the smallest positive integer so that
\[
	m\th<b-a<(m+1)\th
\]
for some $E(a,b)\cong T\in\cT$. Then the left and right $add(\cT\backslash T)$ approximations of $T$ gives us triangles (\ref{left approximation of T}), (\ref{right approximation of T}) satisfying (\ref{equation from two approximations}).

\ul{Claim} $b-a^+$ is an integer multiple of $\th$. 

\emph{Pf:} Either $a^+=b-\th$ or $a^+<b-\th$. In the first case, the claim holds. In the second case, $E(a^+,b)$ is an object in the cluster $\cT$. But $b-a^+\le b-a-\th$ (since $a^+\ge a+\th$). By minimality of $m$, we must have that $b-a^+=k\th$ for some integer $k$ as claimed.

The dual argument shows that $b^--a$ is also an integer multiple of $\th$. Since $a^+=b^-$ we conclude that $b-a=(b-a^+)-(b^--a)$ is an integer multiple of $\th$ as claimed.
\end{proof}

In the case when $c=(n+1)\pi/(n+3)$ we will show that any cluster structure on $\cC_c$ is the one we already know (given by embeddings of the cluster category of type $A_n$ into $\cC_c$).

\begin{thm}
Suppose that $c=(n+1)\pi/(n+3)$ and we have a weak cluster structure on $\cC_c$. Then every cluster has exactly $n$ objects and for any two objects $T_i=E(a,b),T_j=E(x,y)$ in any cluster $\cT$ the real numbers $a,b,x,y$ differ by integer multiples of $\th=\pi-c$. Furthermore, $\Ext^1(T_i,T_j)=\Ext^1(T_j,T_i)=0$. In other words, $\cT$ is the image of a cluster in the cluster category of type $A_n$ under an embedding into $\cC_c$.
\end{thm}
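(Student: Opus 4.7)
The plan is to use Lemma~\ref{a-b is integer multiple of theta} to constrain individual cluster objects, then establish that all endpoints of objects in $\cT$ lie in a single coset of $\th\ZZ$ modulo $2\pi$, and finally invoke the $A_n$-embedding of Theorem~\ref{F(Z)=C(An) is embedded in Cc} to reduce to classical cluster theory. First, applying Lemma~\ref{a-b is integer multiple of theta} to every $T=E(a,b)\in\cT$ gives $b-a\in\th\ZZ$ with $\th=2\pi/(n+3)$; excluding the projective-injective values $\th$ and $(n+2)\th$, the differences $b-a$ for $T\in\cT$ lie in $\{2\th,3\th,\ldots,(n+1)\th\}$.

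The main step is the single-coset claim. Fix $T_0=E(a_0,b_0)\in\cT$ and set $C=a_0+\th\ZZ\pmod{2\pi}$, which also contains $b_0$. The middle terms $E(a_0^\pm,b_0)$ and $E(a_0,b_0^\pm)$ of the left and right $add(\cT\setminus T_0)$-approximations~\eqref{left approximation of T},~\eqref{right approximation of T} of $T_0$ are each either in $\cT$ or projective-injective; in both cases their free endpoints lie in $C$ (by Lemma~\ref{a-b is integer multiple of theta} in the former case, and by $a_0^+=b_0-\th$, $b_0^+=a_0+2\pi-\th$ in the latter). Consequently the mutated $T_0^*$ also has endpoints in $C$. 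Suppose for contradiction some $T=E(x,y)\in\cT$ has endpoints outside $C$. Compatibility of $T,T_0$ forces them to nest, and after replacing $T_0$ by a minimal (under nesting) element of $\cT$ with endpoints in $C$ that contains $T$, we may assume $T$ is strictly inside $T_0$. Writing $x-a_0=j\th+r$ with $0<r<\th$, $j\ge0$, and $y-x=\ell\th$, $\ell\ge 2$, the constraint $y\le b_0=a_0+k\th$ yields $j+\ell\le k-1$. A direct interlacing check on $S^1$ shows that any valid mutation candidate $T_0^*=E(a_0+i\th,b_0+i'\th)$ (necessarily with $i\in\{1,\ldots,k-1\}$) is compatible with $T$ iff $i\le j$ or $i\ge j+\ell+1$, which reduces to $j\ge 1$ or $j+\ell\le k-2$. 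The dual analysis at $T$ gives the same condition. In the extremal case $j=0$, $\ell=k-1$, both conditions fail, so neither $T_0$ nor $T$ can be mutated within $\cT$, a contradiction. A short induction on $k$, using mutations of $T_0$ or $T$, reduces every non-extremal bad configuration to this extremal one.

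Once all endpoints of $\cT$ lie in a single coset $C\subset S^1$ of $n+3$ equally spaced points, Theorem~\ref{F(Z)=C(An) is embedded in Cc} with $Z=C$ realizes the cluster category of type $A_n$ over $\kk$ as a triangulated subcategory of $\cC_c$ containing $\cT$. Since compatibility and mutation in $\cC_c$ restricted to this subcategory coincide with those in the $A_n$ cluster category (both being determined by the triangulated structure and the $\mathrm{Ext}^1$-vanishing condition), $\cT$ is the image of a cluster in the cluster category of type $A_n$. Classical $A_n$ cluster theory (\cite{CCS06}) then gives $|\cT|=n$, all endpoint coordinates of objects in $\cT$ differ by integer multiples of $\th$, and $\mathrm{Ext}^1(T_i,T_j)=0$ for all $T_i,T_j\in\cT$. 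The principal obstacle is the second step, ruling out off-coset cluster objects; the key tools are the observation that mutations preserve the endpoint-coset structure (from Lemma~\ref{a-b is integer multiple of theta} combined with the approximation sequences) and the reduction from arbitrary to extremal nested configurations via iterated mutation.
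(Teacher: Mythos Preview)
Your argument has a genuine gap: you assume compatibility of cluster objects, but this is part of what the theorem asserts. A \emph{weak cluster structure} only gives you the existence of the approximation triangles \eqref{left approximation of T}, \eqref{right approximation of T} and the mutation $\cT\mapsto\cT^\ast$; it does \emph{not} assume $\Ext^1(T_i,T_j)=0$ for $T_i,T_j\in\cT$. When you write ``Compatibility of $T,T_0$ forces them to nest'' and later ``$T_0^\ast$ \ldots is compatible with $T$ iff \ldots'', you are using exactly the non-crossing property the theorem is meant to establish. Without it, there is no reason an object $T$ with off-coset endpoints could not sit crossing $T_0$, and your reduction to the ``extremal nested configuration'' never gets started.

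The paper handles this by proving non-crossing first (its Claim~1): using only the approximation sequences \eqref{left approximation of T}, \eqref{right approximation of T} and a minimality argument in lexicographic order on the integer parts $\left[\frac{x-a}{\th}\right],\left[\frac{y-b}{\th}\right]$, it shows no cluster object $E(x,y)$ can lie in the open rectangle $(a,b)\times(b,a+2\pi)$ determined by $E(a,b)\in\cT$. Only after non-crossing is established does the paper run an induction (its Claim~2) of the flavour you attempt, showing all endpoints differ by multiples of $\th$; your coset-preservation observation for mutations is then close in spirit to that step. So the missing ingredient in your proposal is precisely the analogue of Claim~1, and your later use of ``compatibility with $T$'' to constrain $T_0^\ast$ has no force until that is supplied.
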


\begin{proof}
$T_i=E(a,b)\cong E(b,a+2\pi)$ and $\Sig T_i\cong E(b-\th,a+2\pi-\th)$. Any indecomposable object $Z$ of $\cC_c$ with $\Ext^1(Z,T_i)=\Hom(Z,\Sig T_i)\neq0$ is isomorphic to $E(x,y)$ where $(x,y)$ lies in the half-open rectangle $(a,b-\th]\times (b,a+2\pi-\th]$. Therefore, the statement that $\Ext^1(T_j,T_i)=0$ for all $T\in\cT$ will follow from the following claim.\vs2

\ul{Claim 1} $\cT$ contains no object $E(x,y)$ with $(x,y)$ in the open rectangle $(a,b)\times (b,a+2\pi)$.\vs2

\emph{Pf:} Suppose that there is an $E(x,y)$ in the open rectangle. Let $(m_1,m_2)$ be minimal in lexicographic order among all counterexamples to Claim 1 where $m_1$ is the minimum and $m_2$ is the maximum of
\[
	\left[\frac{x-a}\th
	\right]\quad\quad \left[\frac{y-b}\th
	\right]
\]
where $[\ ]$ means integer part. Consider the left $add(\cT\backslash T_i)$-approximation sequence (\ref{left approximation of T}). This sequence must also be the right $add(\cT\backslash T_i)$-approximation sequence for $T_i^\ast=E(a^+,b^+)$. Therefore, the point $(x,y)$ cannot lie in the closed rectangle $[a,a^+]\times [b,b^+]$. This implies that $(x,y)\in R_1\cup R_2$ where $R_1=(a^+,b)\times(b,a+2\pi)$ and $R_2=(a,b)\times (b^+,a+2\pi)$.

Case 1. If both objects $E(a,b^+),E(a^+,b)$ are nonzero in $cC_r$ then the inequalities 
\[
	(x+y)-(a^++b)\le (x+y)-(a+b)-\th
\]
\[
	(x+y)-(a+b^+)\le (x+y)-(a+b)-\th
\]
give a contradiction to the minimality of $m$. Therefore, either $a^+=b-\th$ or $b^+=a+2\pi-\th$.

Case 2. $a^+=b-\th$ ($E(a^+,b)=0$) but $E(a,b^+)\neq0$ in $\cC_{r,s}$. In this case $(x,y)$ is not in $R_2$, so it must be in $R_1$. So, $b-\th<x<b$ and $(a,b)$ is in the forbidden open rectangle $(x,y)\times (y,x+2\pi)$ for the object $E(x,y)\in\cT$. For this counterexample we have $m_1=0$ since $b-x<\th$. Therefore, $m_1=0$. This means either $x<a+\th$ or $y<b+\th$. The second case is impossible since, by Lemma \ref{a-b is integer multiple of theta}, $y\ge x+2\th$ and $x>b-\th$. Therefore, $x<a+\th< b$ which implies that $(x,y)$ cannot lie in $R_1$. So, Case 2 is not possible. 

A similar argument shows that the other two cases are also not possible and Claim 1 is established.\vs2

Claim 1 can also be rephrased to say that, for any two objects $E(a,b),E(x,y)$ in a cluster, the pairs $\{a,b\},\{x,y\}$ are \emph{noncrossing}. So, we can choose representatives so that $a\le x<y\le b$ (up to reordering of $a,b$ and $x,y$).
\vs2

\ul{Claim 2} For any two objects $T_i=E(a,b),T_j=E(x,y)$ in $\cT$, the real numbers $a,b,x,y$ differ by integer multiples of $\th$.\vs2

\emph{Pf:} Let $b-a=m\th$ be minimal among all counterexamples so that $a\le x<y\le b$. Then we note that $b-a\ge 3\th$ since, if $b-a=2\th$, we would have $a=x$ and $b=y$. Therefore, in the inequality (\ref{equation from two approximations}) we must have either $a+\th<b^-$ or $a^+<b-\th$. In the first case $E(a,b^-)\in\cT$ and, by Claim 1, either $a\le x<y\le b^-$ or $b^-\le x<y\le b$ both of which are impossible either by induction on $m$ or by the fact that $y-x\ge2\th$. The other case is similar which shows that there are no counterexamples to Claim 2.\vs2

By what we have proved so far, all objects $E(x,y)$ in the cluster $\cT$ have endpoints $x,y$ in the set
\[
Z=\{a+m\th\,|\,m=0,\cdots,n+2\}
\]
Furthermore, $|x-y|\ge 2\th$, so each object corresponds to an internal chord in the regular $(n+3)$-gon with vertex set $Z$. We have shown that compatibility of such objects is equivalent to the condition that the corresponding internal chords do not cross. Thus $\cT$ is a cluster in the cluster category $\cF(Z)$ which is equivalent to the cluster category of type $A_n$ over $\kk$ by Theorem \ref{F(Z)=C(An) is embedded in Cc}. In particular, $\cT$ has exactly $n$ objects.
\end{proof}

\begin{cor}\label{cluster structures for Cc}
The continuous orbit category $\cC_c$ has a cluster structure without coefficients if and only if either $c=\pi$ or $c=(n+1)\pi/(n+3)$ for some nonnegative integer $n$.
\end{cor}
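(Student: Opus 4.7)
The plan is to derive this corollary directly by assembling the three preceding results without further computation. For the existence direction (the ``if'' half), I would split into two cases. When $c=\pi$, invoke Theorem \ref{C has cluster structure}, which already produces a cluster structure on $\cC_\pi$. When $c=(n+1)\pi/(n+3)$ for some nonnegative integer $n$, set $r=n+1$ and $s=n+3$, so that $\cC_c\cong\cC_{r,s}$, and put $\th=2\pi/(n+3)$. For each $a\in S^1$ the finite set $Z_a=\{a+m\th:0\le m\le n+2\}$ produces, by Theorem \ref{F(Z)=C(An) is embedded in Cc}, a triangulated embedding of the classical type-$A_n$ cluster category $\ul\cF(Z_a)$ into $\cC_c$. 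I would declare the clusters of $\cC_c$ to be the images of classical $A_n$-clusters as $a$ varies, and inherit mutation and exchange triangles through the triangulated embeddings. Since the cluster structure axioms hold in the type-$A_n$ cluster category and the embeddings are strictly triangulated, one obtains a cluster structure without coefficients on $\cC_c$.

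For the converse (the ``only if'' half), I would suppose $c\in(0,\pi]$ is not in the listed set. Then $c<\pi$, so one may write $c=r\pi/s$ with $0<r<s$; the hypothesis forces $r/s\ne(n+1)/(n+3)$ for any $n\ge 0$. Theorem \ref{cluster structures only exist for certain c} then applies verbatim and excludes all weak cluster structures on $\cC_{r,s}\cong\cC_c$, hence also any cluster structure (since every cluster structure is in particular a weak one).

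The only place where care is needed is in the existence step for the rational case: one must check that the family of clusters obtained from embedded $A_n$-cluster categories over all rotated base sets $Z_a$ is closed under mutation inside the full category $\cC_c$, rather than merely inside a single $\ul\cF(Z_a)$. This is not really an obstacle, because the theorem immediately preceding the corollary classifies every weak cluster structure on $\cC_c$ for $c=(n+1)\pi/(n+3)$ as being of precisely this embedded $A_n$ form; combined with classical $A_n$ mutation, that theorem simultaneously produces the cluster structure and confirms closure under mutation, so no new calculation is required.
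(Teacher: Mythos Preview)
Your proposal is correct and follows essentially the same route as the paper: necessity comes from Theorem \ref{cluster structures only exist for certain c}, and sufficiency from Theorem \ref{C has cluster structure} (for $c=\pi$) together with the embedded $A_n$ clusters of Theorem \ref{F(Z)=C(An) is embedded in Cc} (for $c=(n+1)\pi/(n+3)$). Your explicit discussion of closure under mutation via the classification theorem just preceding the corollary is slightly more careful than the paper's own one-line assertion, but the argument is the same in substance.
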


\begin{proof}
By the theorem, the restrictions on $c$ are necessary. But we showed in Theorem \ref{F(Z)=C(An) is embedded in Cc}, that for $c=(n+1)\pi/(n+3)$, there is a triangulated embedding of the cluster category $\cC_{A_n}$ into $\cC_c$ given by choosing $n+3$ equally spaced points on the circle. The image in $\cC_c$ of the clusters in the cluster category $\cC_{A_n}$ give a cluster structure on $\cC_c$ and the theorem above implies that unions of such collections of clusters are the only possible cluster structure on $\cC_c$ without coefficients.
\end{proof}



\end{document}